\numberwithin{equation}{section}
\numberwithin{figure}{section}
\newtheorem{thm}{Theorem}[section]
\newtheorem{prop}[thm]{Proposition}
\newtheorem{lem}[thm]{Lemma}
\newtheorem{coro}[thm]{Corollary}
\theoremstyle{definition}
\newtheorem{defx}[thm]{Definition}
\newtheorem{rem}[thm]{Remark}
\newcommand{\R}{I\!\!R}
\title{Groupoid approach to the dynamical system of commutative von Neumann Algebras}
\author{{\emph{N. O. Okeke \;  Email: nickokeke@dui.edu.ng, }}
\\ Physical and Mathematical Sciences, Dominican University, Ibadan \\
\emph{M. E. Egwe \;  murphy.egwe@ui.edu.ng}
\\ Department of Mathematics, University of Ibadan, Ibadan, Nigeria}
\date{\vspace{-5ex}}
\begin{document}
\maketitle

\begin{abstract}
The automorphism group $Aut(X,\mu)$ of a compact, complete metric space $X$ with a Radon measure $\mu$ is a subgroup of $\mathcal{U}(L^2(X,\mu))$-the unitary group of operators on $L^2(X,\mu)$. The $Aut(X,\mu)$-action on the generalized space $\mathcal{M}(X)$ is a proper action. Hence, there exists a slice at each point of the generalized space $\mathcal{M}(X)$. Measure Groupoid (virtual group) is subsequently employed to analyze the resulting dynamical system as that of the ergodic action of the commutative algebra (a lattice) $C(X)$ on the generalized space $\mathcal{M}(X)$ which is represented on a commutative von Neumann algebra.
\end{abstract}
\let\thefootnote\relax\footnote{\emph{Mathematics subject Classification (2010):} 22A22, 37A05, 37A25, 37C80 }
\let\thefootnote\relax\footnote{\emph{Key words and phrases:} Measure groupoid, Lie groupoid, Ergodic Action, convolution algebra, von Neumann algebra, Generalized space.}

\section*{Introduction}
Following the conception of dynamical symmetries in physical systems by F. Strocchi \cite{Strocchi2008} as the invariance of the system under the transformations of coordinates or parameters identifying its configurations, or the commutation of action of the symmetry (group) transformations with time evolution of the system, we specify the phase space as constituted by the parametrized geometric space $\{\mathcal{H}_x\}_{x \in X}$, with $X$ a completely regular compact metric space, the time evolution of the system is given by the continuous transformation $\varphi : X \to X$ of the compact space. The transformation $\varphi : x \to \varphi(x)$ defines a symmetry, according to \cite{Strocchi2008}, if \\
(a) it induces an invertible mapping of configurations $\varphi_* : \mathcal{H}_x \to \varphi_*(\mathcal{H}_x) \equiv \mathcal{H}_{\varphi(x)}$; \\
(b) it leaves the dynamical behaviour of the system invariant \[\alpha^t\varphi(\mathcal{H}_x) = \alpha^t\mathcal{H}_{\varphi(x)} \equiv \mathcal{H}_{\varphi(x)(t)} = \mathcal{H}_{\varphi(x(t))} = \varphi(\alpha^t\mathcal{H}_x).\]
These have to do with the concept of ergodicity; relating average over time as given by the iteration of transformations, to average over space given by invariance of measures, and the measurable functions representing the observables. In the case of the action of a compact Lie group on a smooth manifold, the fixed condition of invariance of Haar measure gave rise to the cohomological equation, the solution of which afforded the harmonic analysis of the discrete dynamical system \cite{OkekeEgwe2017}. When the smooth manifold is replaced by a generalized space of Radon measures on $X$, what kind of orbits will this give? What is the nature of the symmetry preserved in the orbits? 

Our interest in this paper is to address these questions, thereby extend the results of \cite{OkekeEgwe2018} in cohomogeneity one $G$-space groupoid analysis to (measure) groupoid actions on the generalized space $\mathcal{M}(X)$ which will be shown to be a commutative von Neumann algebra. Both Peter Hahn \cite{Hahn78} and Arlan Ramsay \cite{Ramsay71} have used measure groupoid to understand Mackey's virtual groups and measure theory that usually results from the consideration. Our focus is more on bringing out the implication of the slice theorem on the measure groupoid approach to  commutative von Neumann algebras.
The proper actions connect directly to slice theorem used in the cohomogeneity-one $G$-space analysis. Therefore the main results of the paper will be stated in view of this concern.

\section{Algebraic and Topological Structures}
Given the commutative algebra $C(X)$, we make use of the prime/maximal ideals which, according to Atiyah and Macdonald \cite{AtiyahDonald69}, play a central role in commutative algebras because of their generalization of the idea of prime numbers in number theory and points in geometry. Subsequently, the restriction of attention to neighbourhood of a point is subsequently captured by the process of localization at a prime ideal.
From the characterization of commutative rings or algebra in \cite{AtiyahDonald69}, which we recommend for basic understanding of a commutative ring or algebra, we highlight the following that connect them immediately to groupoid.

The first is that by definition of an $R$-module $M$ of a ring $R$, the map $R \to End(M)$ is a representation of $R$ on the space of linear transformations of $M$. The second is that given a ring $R$ (or an algebra $\mathcal{A}$), by definition, the units of the ring $R$ form a group $G(1)$ with an action $G(1) \times M \to M$ on every $R$-module $M$. The action gives us an action groupoid or a transformation groupoid $\mathcal{G} = G(1) \ltimes M$ over each module $M$. Thus, $\mathcal{G} = G(1) \times M \rightrightarrows M$ is an action groupoid. We will now highlight the algebraic and topological structures of the commutative algebra $C(X)$ which will help us understand the structure $X$ from the action groupoid trivialized on it.

According to Gillman and Jerison \cite{Gillman1960}, given a compact space $X$, every real valued function on $X$ is continuous if $X$ is discrete. So $\R^X = C(X)$. Conversely, whenever $\R^X = C(X)$ then the characteristic function of every set in $X$ is continuous, showing that $X$ is discrete. This equivalence between the continuity of a real valued functions on $X$ and the discreteness of $X$ is the foundation for using the algebraic variety (or the zero-sets) of $C(X)$ to characterize a regular compact space $X$ in \cite{Gillman1960}.
\begin{defx}\cite{Gillman1960}
A space $X$ is said to be completely regular if it is Hausdorff and whenever $F$ is a closed set and $x \in F^c$, there exists a function $f \in C(X)$ such that $f(x) = 1$ and $f(F) = \{0\}$.
\end{defx}
\begin{defx}
The closed sets $f^{-1}(0) = \ker(f) = \{x \in X : f(x) = 0\}$ are called the zero sets of $C(X)$. On the other hand, the open sets $pos f = \{x : f(x) > 0\}$ and $neg f = \{x : f(x) < 0\} = pos (-f)$ are called cozero sets since they are complements of zero sets. Alternatively, every cozero set is of the form $X - Z(f) = pos |f|$.
\end{defx}
The definition shows how the regularity of the space $X$ is connected to the zero-sets of $C(X)$. Hence, we characterize the zero-sets by using subsets of the form $$f^{-1}(r) = \{x \in X : f(x) = r\}  \equiv  f^{-1}(0) = \ker(f) = \{x \in X : f(x) = 0\}.$$
The importance of these subsets lies in the fact that they are closed subsets. The zero-sets of $C(X)$ define the essential properties of the space $X$, which are those properties invariant under translation of zero-sets or transformation of $X$. We refer the reader to \cite{Gillman1960} for a complete understanding of how the zero-sets define the algebraic and topological structures of the compact regular space $X$. Two maps are defined from the characterization of zero sets of $C(X)$ in $X$ which contain the essentials for our goal in this paper.

The first map is the zero-set map $Z : C(X) \to Z(X)$ defined by $f \mapsto Z(f)$. It maps a function $f \in C(X)$ to its zero-set or kernel in $X$.
The image of $C(X)$ under the zero-set map $Z$ is the family $Z(X)$ of all zero-sets which is a base for closed sets. The weak topology on $C(X)$ relates to the topology defined on $X$ by $Z(X)$ as the base of closed sets. Cf. \cite{Gillman1960}.

The following properties follow from this definition. (a) $Z(f) = Z(|f|) = Z(f^n), \; \forall \; n \in \mathbb{N}$; (b) $Z(0) = X, Z(1) = \emptyset$; (c) $Z(fg) = Z(f) \cup Z(g)$; (d) $Z(f^2 + g^2) = Z(|f| + |g|) = Z(f) \cap Z(g)$. The following also characterize zero sets: (1) Every zero set is a $G_\delta$ (a countable intersection of open sets) since $Z(f) = \underset{n \in \mathbb{N}}\bigcap \{x \in X : |f(x)| < \frac{1}{n} \}$. (2) When the space $X$ is normal every closed $G_\delta$ is a zero-set. (3) When $X$ is a metric space, every closed set is a zero-set consisting of all points whose distance from it is zero. (4) Every set of the form $\{x : f(x) \geq 0\}$ is a zero-set, since $\{x : f(x) \geq 0\} = Z(f\wedge 0) = Z(f-|f|)$ (supremum) and $\{x : f(x) \leq 0\} = Z(f\vee 0) = Z(f + |f|)$ (infimum).

The second map is defined based on the correspondence between maximal ideals $\mathfrak{m}_x = \{f \in C(X) : x \in \ker f\}$ of $C(X)$, which are kernels of the surjective ring homomorphism $x : C(X) \to \R$ defined by $x(f) = f(x)$, and the points of $X$. This is given in \cite{AtiyahDonald69} as the homeomorphism $\mathfrak{z} : X \to C(X), x \mapsto m_x$ of $X$ onto $C(X)$. The homeomorphism identifies $X$ with the algebraic closed points of $C(X)$ under the Zariski topology. Because the topological and the algebraic structures of $C(X)$ are related to the convergence of $z$-filters in $z$-ultrafilters of $X$, we have the following result on the coincidence of the two topologies.
\begin{prop}
The $C(X)$-topology on $X$ coincides with the topology induced on $X$ by the Zariski topology on $C(X)$.
\end{prop}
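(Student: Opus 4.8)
The plan is to reduce the equality of the two topologies to an equality of their closed sets, since two topologies on one underlying set coincide exactly when they determine the same closed sets. By the discussion preceding the statement, the $C(X)$-topology on $X$ is the topology for which the family $Z(X)$ of zero-sets is a base for the closed sets; here properties (b) and (c), namely $Z(0)=X$, $Z(1)=\emptyset$ and $Z(fg)=Z(f)\cup Z(g)$, guarantee that $Z(X)$ is closed under finite unions and so legitimately serves as a base for closed sets. Consequently the $C(X)$-closed sets are precisely the arbitrary intersections $\bigcap_{\alpha} Z(f_\alpha)$ of zero-sets. On the other side I would transport the Zariski topology to $X$ along the cited correspondence $\mathfrak{z}:x\mapsto\mathfrak{m}_x$. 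A basic Zariski-closed set has the form $V(I)=\{\mathfrak{m}:I\subseteq\mathfrak{m}\}$ for an ideal $I\subseteq C(X)$, and under $\mathfrak{z}$ it pulls back to $\{x\in X:I\subseteq\mathfrak{m}_x\}=\{x:f(x)=0\text{ for all }f\in I\}=\bigcap_{f\in I}Z(f)$, using the defining description $\mathfrak{m}_x=\{f:x\in\ker f\}$.

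With both families of closed sets written as intersections of zero-sets, the coincidence follows by two inclusions. First, each Zariski-closed set equals $\bigcap_{f\in I}Z(f)$, an intersection of members of $Z(X)$, hence is $C(X)$-closed. Conversely, given a $C(X)$-closed set $C=\bigcap_\alpha Z(f_\alpha)$, I would take $I$ to be the ideal of $C(X)$ generated by $\{f_\alpha\}$ and check that the pullback of $V(I)$ is exactly $C$: if $x\in C$ then $f_\alpha(x)=0$ for every $\alpha$, whence every finite combination $\sum_i h_i f_{\alpha_i}\in I$ vanishes at $x$, so $x$ lies in the pullback of $V(I)$; the reverse inclusion is immediate since each generator $f_\alpha$ already belongs to $I$. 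Thus the two families of closed sets are identical, and the topologies agree.

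The step that genuinely uses the hypotheses, and which I expect to be the main obstacle, is securing the legitimacy of the identification $\mathfrak{z}$ itself: the argument needs every maximal ideal of $C(X)$ to be of the evaluation form $\mathfrak{m}_x$ for a unique $x\in X$, so that restricting the Zariski topology to the maximal (closed) points yields a topology on $X$ rather than on some strictly larger space. This bijectivity is precisely where compactness enters — for a merely completely regular space the maximal ideals of $C(X)$ correspond to points of the Stone--\v{C}ech compactification rather than of $X$ — and it is furnished by the Gelfand--Kolmogorov correspondence invoked as the homeomorphism $\mathfrak{z}:X\to C(X)$, together with the standing completely regular, compact metric assumptions on $X$; complete regularity additionally ensures that the zero-sets form an adequate base, so that the $C(X)$-topology recovers the ambient topology of $X$. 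Once this bijection is in place, the remaining verifications are the purely formal set-theoretic manipulations above, and no further analytic input is required.
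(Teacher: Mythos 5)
Your proof is correct, but it takes a genuinely different route from the paper's. The paper argues through the Gillman--Jerison convergence machinery: complete regularity guarantees that $z$-filters converge, the closure of a set is described via cluster points of the $z$-filter of zero-sets containing it, and the proof then consists of observing that the $z$-ultrafilter $Z[\mathfrak{m}_x]$ converges to $x$, so that the composition $Z \circ \mathfrak{z} : X \to X$ of the two bijections is the identity; the coincidence of the topologies is read off from this compatibility of convergence. You instead compare the two lattices of closed sets directly: the $C(X)$-closed sets are arbitrary intersections of zero-sets (a legitimate base for closed sets since $Z(fg)=Z(f)\cup Z(g)$), the Zariski-closed sets pull back along $\mathfrak{z}$ to sets of the form $\bigcap_{f\in I}Z(f)$, and the ideal generated by a family $\{f_\alpha\}$ realizes any intersection $\bigcap_\alpha Z(f_\alpha)$ as such a pullback. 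Your argument is the standard commutative-algebra one (Atiyah--Macdonald style); it is more self-contained and checkable step by step, and it makes explicit exactly which algebraic identities are used, whereas the paper's version is terser, leaves the set-level verification implicit, and has the advantage of setting up the $z$-ultrafilter language the paper relies on later. One small refinement to your final paragraph: for the topology induced on $X$ itself, injectivity of $\mathfrak{z}$ (which follows from complete regularity) is all that is strictly needed, since one only restricts the Zariski topology to the image $\mathfrak{z}(X)$; the Gelfand--Kolmogorov surjectivity you invoke (every maximal ideal is an evaluation ideal, via compactness) is what upgrades $\mathfrak{z}$ to a homeomorphism onto the whole maximal spectrum, which is how the paper frames the identification but is not forced by the statement being proved.
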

\begin{proof}
Given the complete regularity of $X$ which ensures the convergence of $z$-filters in the space, the zero-sets $Z(X)$ of $X$ form a base for the closed sets. The closure of a subset $S \subset X$ is the set of all cluster points of the $z$-filter $\mathcal{F}$ of all zero-sets containing $S$. So, the $z$-filter $\mathcal{F}$ converges to the limit $x$ if every neighbourhood of $x$ contains a member of $\mathcal{F}$. Thus, with the $z$-ultrafilter $Z[\mathfrak{m}_x]$ converging to $x$ for each $x \in X$, we have the composition of the two bijections $Z\circ \mathfrak{z} : X \to X$ to be an identity on $X$.
\end{proof}

\section{Action of the Borel Group}
In line with our view of the commutative algebra $C(X)$ as a bundle of action groupoids based on its maximal ideals, we can consider the map $\mathfrak{z} : X \to C(X)$ as a section of a bundle and the map $Z : C(X) \to X$ as a projection from the fibres which are action groupoids or the maximal ideals $\mathfrak{m}_x$ with the action of $G(1)$. We will therefore consider $G(1)$ as a subgroup of the automorphism group of the $z$-ultrafilter $Z[\mathfrak{m}_x]$ determined by the fibres of the Lie groupoid defined by the action groupoid bundle over $X$. We will work out the detail of this structure in the sequel.

From the characterization of the group of units $G(1)$ above, it follows that $Z(g) = \emptyset$ for every $g \in G(1)$. Thus, $G(1)$ can be made to define various actions on the maximal ideal $\mathfrak{m}_x$ which reflect on the $z$-ultrafilter $Z[\mathfrak{m}_x]$ corresponding to the maximal ideal, converging at each $x \in X$. This is considered an affine $G(1)$-action on the module $\mathfrak{m}_x$, since the whole space $X$ with $\R$-action is locally convex. We define the following actions. \\
(1) First, given $f \in \mathfrak{m}_x$ and $g \in G(1)$, $Z(f)\cap Z(g) = \emptyset$, then $|f| + |g|$ is a unit since it has empty zero set; hence $Z(|f| + |g|) = \emptyset$. We define an action $G(1) \times \mathfrak{m}_x \to \mathfrak{m}_x$ by \[ (g,f) \mapsto h = \frac{|f|}{|f| + |g|}, \; \text{or } = h =  |f|\cdot(|f| + |g|)^{-1}. \] Then $h$ is defined by $h[Z(f)] = \{0\}$ and $h[X-Z(f)] = (0,1)\}$; and in general, $h : X \to [0,1)$. This is an action of $G(1)$ on the maximal ideal $\mathfrak{m}_x$ which preserves the zero-sets $Z(f) = Z(h)$ and the cozero-sets $X - Z(f) = X - Z(h)$ for every $f \in \mathfrak{m}_x$. Thus, the $G(1)$-action preserves the structures of $C(X)$ by preserving the open and closed sets in $X$ associated with the algebra $C(X)$. \\
(2) Second, given $f \in \mathfrak{m}_x$ and $g \in G(1)$, define a multiplicative action $G(1) \times \mathfrak{m}_x \to \mathfrak{m}_x$ defined by $\tau_g : f \mapsto gf$ or $f \mapsto fg^{-1}$, which translates the zero-set $Z(f) \mapsto Z(gf)$ or $Z(f) \mapsto Z(fg^{-1})$ and cozero sets $pos|f| \mapsto pos|gf|$ (or $pos|fg^{-1}|$ resp.) of $f \in \mathfrak{m}_x$. The action scales the functions in $\mathfrak{m}_x$.

The combination of these actions gives the affine $G(1)$-action on each maximal ideal $\mathfrak{m}_x$, which preserve the zero-sets and cozero-sets associated to the maximal ideals. We can therefore consider $G(1)$ to be a sort of transformation group of elements of the base (open/closed sets) of the topology of $X$ which are measurable or Borel sets.

Furthermore, since the $G(1)$-actions preserve the topological structure induced on $X$ by $C(X)$, it can also be considered to represent a topological action of $(0,1]$ (or $\R^*$) on the topological space $X$, and subsequently on the family $\mathfrak{F} =\{X - Z(f)\}_{f \in \mathfrak{m}_x} = \{U_f : f \in \mathfrak{m}_x \}$ generating the Borel structure $(X,\mathcal{B})$. The family $\mathfrak{F} = \{U_f : f \in \mathfrak{m}_x \}$ of open sets corresponds to the fibre $\mathfrak{m}_x$ at each $x \in X$. The continuity of the $G(1)$-actions follows from their preservation of the topological structure of $X$.

The Borel structure induced by the topology is subsequently preserved. Hence, given the topology on $X$ associated to the weak topology on $C(X)$, the $(X, \mathcal{B})$ is the Borel space associated with the topology on $X$ if $\mathcal{B}$ is the smallest $\sigma$-algebra of subsets of $X$ with respect to which all real-valued continuous functions are measurable. $\mathcal{B}$ is generated by sets of the form $f^{-1}(C)$, where $C$ is any closed subset of the real line and $f \in C(X)$. Within the background of translations of the zero-sets by elements of $G(1)$, we can define a Borel transformation of these open/closed base as follows.
\begin{defx}\cite{Varadarajan62}
An automorphism of a Borel space $(X,\mathcal{B})$ is a one-to-one map $\phi$ of $X$ onto itself such that $\phi^{-1}(A) \in \mathcal{B}$ if and only if $A \in \mathcal{B}$.
\end{defx}
Thus, given the Borel space $(X, \mathcal{B})$, a map $T : \mathfrak{F} \to \mathfrak{F}$ of the generating family of the Borel $\sigma$-algebra $\mathfrak{F}$, which is a subalgebra of $\mathcal{B}$ onto itself, is an automorphism of the neighbourhood base $(\mathfrak{F})$ of the topology of $X$ if it is one-to-one and $T^{-1}(A) \in \mathcal{B}$ whenever $A \in \mathcal{B}$.
\begin{rem}
The family $\mathfrak{B} = \{U_f : f \in \mathfrak{m}_x, x \in X \}$ of open sets separates the points of $X$. Thus, it implies that for any pair $x,y \in X$, there is $U_f \in \mathfrak{F}$ such that $x \in U_f$ and $y \not \in U_f$. $X$ is therefore separated.
Note also that the automorphism defined above is within a maximal or an ultrafilter of $Z(X)$ which ensures the meeting of images and pre-images.
\end{rem}
\begin{prop}
Given that $\mathcal{B}$ is the Borel structure generated on $X$ by the families $\mathfrak{F} = \{U_f : f \in \mathfrak{m}_x, x \in X \}$. The map $\varphi : g \mapsto T_g$ is a representation of $G(1)$ on the group of automorphisms $Aut(\mathfrak{F}_x)$ of the the generating family of the Borel $\sigma$-algebra $\mathcal{B}$, whereby $T_g : U_f \to U_{gf}$.
\end{prop}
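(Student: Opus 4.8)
The plan is to verify that $\varphi : g \mapsto T_g$ satisfies the two defining requirements of a representation into $Aut(\mathfrak{F}_x)$: first, that each $T_g$ is a genuine element of $Aut(\mathfrak{F}_x)$, and second, that $\varphi$ respects the group structure, i.e. $\varphi(gh) = \varphi(g)\varphi(h)$ and $\varphi(1) = \mathrm{id}$. These are exactly the conditions of the multiplicative action $\tau_g$ of the previous section, now read off at the level of cozero sets, so the argument amounts to transporting that action along the assignment $f \mapsto U_f$.

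First I would check that $T_g$ is well defined and maps $\mathfrak{F}_x$ onto itself. Since $\mathfrak{m}_x$ is an ideal of $C(X)$ and $g \in G(1) \subset C(X)$, the product $gf$ again lies in $\mathfrak{m}_x$ for every $f \in \mathfrak{m}_x$ (indeed $(gf)(x) = g(x)f(x) = 0$), so $U_{gf} \in \mathfrak{F}_x$ and $T_g(\mathfrak{F}_x) \subseteq \mathfrak{F}_x$. Because $g$ is a unit, $g^{-1} \in G(1)$ exists, and $T_{g^{-1}}$ is a two-sided inverse for $T_g$ since $g^{-1}(gf) = f = g(g^{-1}f)$; hence $T_g$ is a bijection of $\mathfrak{F}_x$. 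To promote this bijection to a Borel automorphism in the sense of the definition of Varadarajan, I would invoke the fact established in the preceding paragraphs that the $G(1)$-action carries cozero sets to cozero sets and preserves the topological, hence Borel, structure induced on $X$ by $C(X)$; consequently $T_g^{-1}(A) \in \mathcal{B}$ whenever $A \in \mathcal{B}$, and $T_g \in Aut(\mathfrak{F}_x)$.

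Next I would establish the homomorphism property, which is the formal heart of the argument and follows from associativity of multiplication in $C(X)$: for all $f \in \mathfrak{m}_x$ one has $T_{gh}(U_f) = U_{(gh)f} = U_{g(hf)} = T_g(U_{hf}) = T_g T_h(U_f)$, so $\varphi(gh) = \varphi(g)\varphi(h)$, while $T_1(U_f) = U_{1\cdot f} = U_f$ gives $\varphi(1) = \mathrm{id}$. Thus $\varphi$ is a group homomorphism $G(1) \to Aut(\mathfrak{F}_x)$, i.e. a representation in the stated sense.

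The point requiring the most care, and the real obstacle, is the precise sense in which $\mathfrak{F}_x$ is being acted upon. By property (c) together with $Z(g) = \emptyset$ for a unit $g$, one has $Z(gf) = Z(f)$ and hence $U_{gf} = U_f$ as bare point-sets, so on the underlying subsets of $X$ the map $T_g$ would collapse to the identity. To retain a nontrivial representation one must regard $\mathfrak{F}_x$, via the correspondence $f \leftrightarrow U_f$, as carrying the module structure of $\mathfrak{m}_x$ on which $\tau_g$ is the honest scaling action; the representation $\varphi$ is then the transport of $g \mapsto \tau_g$ along this correspondence. I would therefore make the identification explicit and verify that it is equivariant, so that the structure preserved by the elements of $Aut(\mathfrak{F}_x)$ is the functional one rather than the merely set-theoretic one. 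Once this is pinned down, the homomorphism computation above carries over verbatim and the proposition follows.
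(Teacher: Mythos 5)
Your proposal is correct, and its formal core coincides with the paper's own proof: the paper's argument consists precisely of your two verifications, namely the homomorphism identity $T_{g_2g_1}(U_f) = T_{g_2}(U_{g_1 f}) = U_{g_2g_1 f}$ (associativity of multiplication in $C(X)$) and $T_1(U_f) = U_f$. What you add beyond the paper is, first, the routine but omitted checks that $T_g$ actually lands in $\mathfrak{F}_x$ (because $\mathfrak{m}_x$ is an ideal, $gf \in \mathfrak{m}_x$) and is invertible with inverse $T_{g^{-1}}$, and, second --- more importantly --- the observation in your final paragraph, which the paper misses entirely: since $Z(g) = \emptyset$ for a unit and $Z(gf) = Z(g) \cup Z(f)$, one has $U_{gf} = U_f$ as subsets of $X$, so on the bare set system $\mathfrak{F}_x$ every $T_g$ is the identity map and $\varphi$ collapses to the trivial representation. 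The paper's proof not only ignores this degeneracy but obscures it: its displayed inclusion $U_{g_2g_1f} \subseteq U_{g_2f} \cap U_{g_1f}$ is in fact an equality of three copies of the same set $U_f$. Your remedy --- regarding $\mathfrak{F}_x$ as the $\mathfrak{m}_x$-labelled family, i.e.\ transporting the scaling action $\tau_g$ along the (non-injective) assignment $f \mapsto U_f$ rather than acting on point-sets --- is the only reading under which the proposition carries nontrivial content, and it is consistent with the paper's earlier description of the $G(1)$-action as \emph{preserving} zero-sets and cozero-sets. In short, your proof is at least as complete as the paper's, and your final caveat identifies a genuine defect in the statement as the paper proves it, together with the natural repair.
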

\begin{proof}
Recall that the families $\mathfrak{F} = \{U_f : f \in \mathfrak{m}_x, x \in X \}$ constitute a basis for the topology on $X$. By definition, the open sets $U_f$ are cozero sets of $f \in \mathfrak{m}_x$ complement to the zero set $Z(f)$, for each $f \in \mathfrak{m}_x$-a $C(X)$-module. Thus, since $\mathcal{B}$ is the Borel structure associated with the topology on $X$ and $G(1)$ is the group of units in $C(X)$, the map, $\varphi : G(1) \to Aut(\mathcal{B})$ is a representation associated to the fibre action; when restricted to the generating subfamily $\mathfrak{F}$, corresponding to $Z[\mathfrak{m}_x], x \in X$, it is defined by $\varphi(g) = T_g$ such that $T_g(U_f) = U_{gf}$ and satisfies the properties of a group action:\\
(i) $T_{g_2g_1}(U_f) = T_{g_2}(U_{g_1f}) = U_{g_2g_1f} \subseteq U_{g_2f} \cap U_{g_1f} \in \mathfrak{F}_x$; \\ (ii) $T_{1}(U_f) = U_f$, for any $U_f \in \mathfrak{F}_x$.\\
Hence, the map $g \mapsto T_g$, where $g \in G(1), U_f \in \mathfrak{F}_x$, defines a representation of $G(1)$ on the group of automorphisms $Aut(\mathfrak{F}_x)$ of the generating set at each maximal ideal $\mathfrak{m}_x$ constituting the Borel $\sigma$-algebra $\mathfrak{B}$.
\end{proof}
\begin{coro}
The family of open sets $\{U_f : f \in \mathfrak{m}_x, x \in X\}$ with the action of $G(1)$ generates a Borel structure ($\sigma$-algebra) on $X$.
\end{coro}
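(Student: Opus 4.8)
The plan is to show that the $\sigma$-algebra $\sigma(\mathfrak{F})$ generated by the family $\mathfrak{F} = \{U_f : f \in \mathfrak{m}_x, x \in X\}$ of cozero sets coincides with the Borel structure $\mathcal{B}$, that is, the smallest $\sigma$-algebra rendering every $f \in C(X)$ measurable, generated by the sets $f^{-1}(C)$ with $C \subseteq \R$ closed. I would argue the two inclusions separately. The inclusion $\sigma(\mathfrak{F}) \subseteq \mathcal{B}$ is immediate, since each $U_f = X - Z(f)$ is open and therefore already a Borel set; the content lies in the reverse inclusion $\mathcal{B} \subseteq \sigma(\mathfrak{F})$.

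For the reverse inclusion I would reduce the generators $f^{-1}(C)$ of $\mathcal{B}$ to countable set-operations on members of $\mathfrak{F}$. First I invoke the fact, recorded above, that in the completely regular space $X$ the cozero sets form a base for the topology, so every open set is a union of members of $\mathfrak{F}$. Because $X$ is a compact metric space it is second countable, whence every open set is in fact a \emph{countable} union of basic cozero sets---this is the step that keeps the unions inside a $\sigma$-algebra. Dually, since $X$ is metric every closed set is a zero set (property (3)), and by property (1) every zero set is a $G_\delta$, namely $Z(f) = \bigcap_{n \in \mathbb{N}} \{x : |f(x)| < 1/n\}$, a countable intersection. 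Chaining these facts places every closed set, hence every open set, and finally every generator $f^{-1}(C)$ inside $\sigma(\mathfrak{F})$.

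Two supplementary points close the argument and bring in the $G(1)$-action. Because each $f \in \mathfrak{m}_x$ has nonempty zero set, no single $U_f$ equals $X$; to capture $X$ itself I use the separation property of the Remark: for distinct $x, y$, complete regularity provides $f$ with $f(x) = 0$ and $f(y) \ne 0$, so the $U_f$ cover $X$, and compactness yields a finite subcover, giving $X \in \sigma(\mathfrak{F})$. The $G(1)$-action enters through the preceding Proposition, where the representation $\varphi(g) = T_g$ acts by $T_g(U_f) = U_{gf}$ and permutes $\mathfrak{F}$ within each fibre $Z[\mathfrak{m}_x]$; thus translating zero sets by units keeps the generating family stable and does not enlarge $\sigma(\mathfrak{F})$ beyond $\mathcal{B}$, so the orbit structure refines the base without altering the generated algebra.

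The principal obstacle I anticipate is justifying that generating from the base produces exactly $\mathcal{B}$ and not a strictly smaller $\sigma$-algebra: for a general completely regular space the $\sigma$-algebra spanned by a base can be properly contained in the Borel $\sigma$-algebra, so the argument must lean essentially on the compact metric hypothesis to convert arbitrary unions of basic sets into countable ones. A subordinate technical care-point is to confirm that the $G_\delta$ decomposition of $Z(f)$ uses only admissible sets: each approximating set $\{x : |f(x)| < 1/n\}$ is the cozero set $\mathrm{pos}\,(1/n - |f|)$, which is either all of $X$ or a proper cozero set lying in $\mathfrak{F}$, and in either case belongs to $\sigma(\mathfrak{F})$.
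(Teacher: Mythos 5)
Your proof is correct, and it is considerably more substantial than what the paper actually provides. The paper gives no proof environment for this corollary at all: it treats the statement as an immediate consequence of the preceding proposition together with Mackey's definition of generation --- the family $\mathfrak{B} = \{U_f : f \in \mathfrak{m}_x, x \in X\}$ generates the smallest Borel structure containing it, and ``since $\mathfrak{B}$ is the base of the topology on $X$, the Borel structure $\mathcal{B}$ is associated with the topology on $X$.'' In other words, the paper asserts definitionally precisely the identification $\sigma(\mathfrak{F}) = \mathcal{B}$ that you recognized as the real mathematical content and then proved. Your route --- the trivial inclusion $\sigma(\mathfrak{F}) \subseteq \mathcal{B}$, then the reverse inclusion via second countability of the compact metric space $X$ (so that arbitrary unions of basic cozero sets reduce to countable ones), complementation to reach closed sets, and the observation that the generators $f^{-1}(C)$ of $\mathcal{B}$ are closed --- is exactly the argument needed to close the gap the paper glosses over; indeed your ``principal obstacle'' remark pinpoints why the paper's one-line justification is insufficient in general, since for a non-second-countable completely regular space the $\sigma$-algebra spanned by a base can be strictly smaller than the Borel algebra. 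Two minor economies: the $G_\delta$/zero-set detour is redundant, because once every open set lies in $\sigma(\mathfrak{F})$ its complement does automatically; and $X \in \sigma(\mathfrak{F})$ holds by the definition of a $\sigma$-algebra on $X$, so the finite-subcover argument, while correct, is not needed. Your handling of the $G(1)$-action (it permutes the generating family via $T_g(U_f) = U_{gf}$, hence stabilizes $\sigma(\mathfrak{F})$ rather than enlarging it) matches the role the action plays in the paper's preceding proposition, from which the corollary is drawn.
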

A family $\mathfrak{B} = \{U_f : f \in \mathfrak{m}_x, x \in X \}$ of subsets of $X$, according to Mackey \cite{Mackey57}, is said to generate a unique Borel structure $\mathcal{B}$ on $X$ if it is the smallest Borel structure $\mathcal{B}$ for $X$ which contains $\mathfrak{B}$. In this case, since $\mathfrak{B}$ is the base of the topology on $X$, the Borel structure $\mathcal{B}$ is associated with the topology on $X$. We will now highlight how the association of these subsets of $X$ to the algebra $C(X)$ define their measurability.

\subsection{$C(X)$-Embedding and Measurability}
Given the weak topology on $X$ and the Borel structure generated by the base of closed (or open) sets associated with this topology, we define a homeomorphism as follows. For each $f \in \mathfrak{m}_x$ and $g \in G(1)$, $Z(f)\cap Z(g) = \emptyset$, hence $Z(|f| + |g|) = \emptyset$, define \[ \tau_g : f \mapsto \tau(f,g) = \frac{|g|}{|f| + |g|}. \] Then $\tau_g$ is a measurable function define by each $g \in G(1)$ on the fibre (or on the local bisection) of the groupoid, defined by $\tau_g(f) = \tau(f,g)$. This functions map the zero sets $Z(f)$ to $\{1\}$ and the cozero sets $X-Z(f)$ to the open set $(0,1)$. Thus, it is a homeomorphism on the regular space $X$, such that $\tau(f,g) : X \to (0, 1] \simeq \mathbb{T}$. Thus, the compact space $X$, with the weak topology, is isomorphic to the torus $\mathbb{T}$.

Corresponding to this homeomorphism is a $G(1)$-action defined on the neighbourhood basis (or on the base of closed sets $Z(X)$) of the weak topology on $X$. By definition, the zero sets are balanced sets of the topological space ($X,C(X)$-topology), which is a topological vector space(tvs). The above homeomorphism, taking open subsets of $X$ to open subsets of $\mathbb{T}$, helps us to understand the Borel structure of $G(1)$ and the notion of $C(X)$-embedding of closed sets of $\R$ which is defined in \cite{Gillman1960} as follows.
\begin{defx}
A subspace $S \subset X$ is said to be $C(X)$-embedded if every function in $C(S)$ can be extended to a function in $C(X)$.
\end{defx}
Since every closed set in a metric space is a zero-set, and disjoint sets are completely separated; if the subset $S$ is closed, then its closed subsets are closed in $X$, and completely separated sets in $S$ have disjoint closures in $X$; it follows by Tietse's Extension Theorem that every closed set in $S$ is $C(X)$-embedded. This rests on the fact that every closed set in $\R$ is $C(X)$-embedded. The following theorem expresses this idea.
\begin{thm}\cite{Gillman1960}
If there exists a homeomorphism (function) in $C(X)$ taking $S$ onto a closed set in $\R$, then $S$ is $C(X)$-embedded in $X$.
\end{thm}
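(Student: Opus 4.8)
The plan is to reduce the extension problem on $X$ to the classical Tietze extension on the real line, transporting functions back and forth along the given homeomorphism. Write $h \in C(X)$ for the function that restricts to a homeomorphism of $S$ onto the closed set $h(S) \subseteq \R$, and let $\phi \in C(S)$ be an arbitrary function we wish to extend to all of $X$; producing such an extension for every $\phi$ is precisely the assertion that $S$ is $C(X)$-embedded.

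First I would use that $h|_S : S \to h(S)$ is a homeomorphism to form the transported function $\psi = \phi \circ (h|_S)^{-1}$. This $\psi$ is continuous on $h(S)$ precisely because the inverse $(h|_S)^{-1}$ is continuous, and this is the one place where the hypothesis that $h$ is a homeomorphism, rather than merely a continuous surjection, is essential; thus $\psi \in C(h(S))$. Next, since $\R$ is a metric and hence normal space, and $h(S)$ is by hypothesis closed in $\R$, the Tietze Extension Theorem yields a continuous extension $\Psi \in C(\R)$ with $\Psi|_{h(S)} = \psi$. This step is exactly the realization of the quoted fact that every closed set in $\R$ is $C$-embedded in $\R$.

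Finally I would set $\tilde\phi = \Psi \circ h$, which lies in $C(X)$ as a composition of continuous maps, and verify the restriction: for $s \in S$,
\[ \tilde\phi(s) = \Psi(h(s)) = \psi(h(s)) = \phi\big((h|_S)^{-1}(h(s))\big) = \phi(s), \]
so that $\tilde\phi|_S = \phi$. Since $\phi$ was arbitrary, every element of $C(S)$ extends to a function in $C(X)$, which gives the conclusion. The one genuine obstacle is establishing the continuity of $\psi$ on the image, where the homeomorphism hypothesis does its real work; I expect the remainder to be routine, with the closedness of $h(S)$ entering solely to license Tietze, so that the argument would break down for a non-closed image.
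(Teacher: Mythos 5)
Your proof is correct and is essentially identical to the paper's: the paper also transports $f \in C(S)$ to $H = \tau(S)$ via the inverse homeomorphism $\theta = \tau^{-1}|_{\tau(S)}$, extends over $\R$ using the fact that closed subsets of $\R$ are $C$-embedded (your explicit appeal to Tietze), and pulls back along $\tau$ to get the extension $g \circ \tau$ exactly as your $\Psi \circ h$. Your added remark pinpointing where the homeomorphism hypothesis (continuity of the inverse) and the closedness of the image are used is a nice clarification, but the argument itself is the same.
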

\begin{proof}
Let $\tau$ be the homeomorphism (function) in $C(X)$. Then $\theta = \tau^{-1}|_{\tau(S)}$ is a continuous mapping from $H = \tau(S)$ onto $S$, with $\theta(\tau(s)) = s \in S$. Let $f \in C(S)$ be arbitrary. The composite function $f \circ \theta$ is in $C(X)$. Since $H \subset \R$ is closed, by hypothesis, it is embedded; so, there exists $g \in C(\R)$ that agrees with $f\circ \theta$ on $H$. Then $g \circ \tau \in C(X)$, and for all $s \in S$, we have \[(g\circ \tau)(s) = f(\theta(\tau(s))) = f(s),\] which means that $g\circ \tau$ is an extension of $f$.
\end{proof}
The homeomorphism $\tau$ is a unit in $C(X)$. Thus, the $G(1)$-action is related to the $C(X)$-embedding of closed sets of $\R$, and expresses its Borel property. It also highlights the local convexity of the regular compact space $X$ as a $G(1)$-space. We give this as a corollary.
\begin{coro}
The $G(1)$-action on every completely regular compact space $X$, $C(X)$-embeds closed subsets of $\R$ in $X$ thereby making it locally convex.
\end{coro}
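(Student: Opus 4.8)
The plan is to read this corollary as a direct packaging of the $C(X)$-embedding theorem just proved, together with the observation that the homeomorphisms arising from the $G(1)$-action supply exactly the hypothesis of that theorem. First I would recall that each map $\tau_g : f \mapsto |g|\,(|f|+|g|)^{-1}$ is a unit in $C(X)$ and, as established above, realizes a homeomorphism $X \to (0,1] \simeq \mathbb{T}$ carrying each zero set $Z(f)$ to $\{1\}$ and each cozero set $X - Z(f)$ onto the open interval $(0,1)$. The embedding half then follows by feeding these $\tau_g$ into the hypothesis ``there exists a homeomorphism in $C(X)$ taking $S$ onto a closed set in $\R$'': given a closed $H \subset \R$ lying in the image, continuity of $\tau_g$ makes $S := \tau_g^{-1}(H)$ closed in $X$ while $\tau_g(S) = H$ is closed in $\R$, so the preceding theorem yields that $S$ is $C(X)$-embedded. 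Since every closed subset of $\R$ is $C(X)$-embedded by Tietze's Extension Theorem, ranging over all $g \in G(1)$ and all such $H$ shows that the $G(1)$-action realizes the closed subsets of $\R$ as $C(X)$-embedded subspaces of $X$, which is the first assertion.

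For the second assertion I would exploit that the $G(1)$-action was set up as an affine action and that $(X, C(X)\text{-topology})$ carries the structure of a topological vector space in which the zero sets are balanced. The idea is that each $\tau_g$ maps $X$ into the convex interval $(0,1]$ and each cozero set $U_f$ onto the convex open interval $(0,1)$; since the $C(X)$-topology on $X$ is the initial (weak) topology for the family $\{\tau_g : g \in G(1)\}$, the preimages $\tau_g^{-1}\bigl((a,b)\bigr)$ of convex open intervals form a subbase at every point. I would then argue that, because the action combining $\tau_g$ with the scaling $f \mapsto gf$ is affine, these preimages can be taken convex (equivalently balanced, up to translation), producing a neighbourhood base of convex sets at each point and hence establishing the local convexity of $X$ as a $G(1)$-space.

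The main obstacle I anticipate is not the embedding half, which is a corollary in the literal sense, but reconciling the phrase ``locally convex'' with the fact that $X$ is a priori only a completely regular compact metric space carrying no ambient linear structure. The delicate step is to make precise the sense in which $(X, C(X)\text{-topology})$ is a topological vector space whose balanced zero sets and affine $G(1)$-action generate genuinely convex neighbourhoods; one must check that pulling back convex sets along the $C(X)$-functions is compatible with the module/affine structure on each fibre $\mathfrak{m}_x$, so that convexity is truly inherited rather than merely asserted by analogy with the convex target $(0,1]$. A minor bookkeeping point, best handled through the identification $(0,1] \simeq \mathbb{T}$, is ensuring that the relevant $\tau_g$-images are closed so that the embedding theorem applies cleanly. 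Once the convex neighbourhood base is exhibited via the $\tau_g$ and tied back to the $C(X)$-embedding of the closed, hence convex, subsets of $\R$, the two assertions combine into the single statement of the corollary.
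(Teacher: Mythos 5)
Your proposal is correct and takes essentially the same route as the paper: the paper states this corollary with no separate proof, treating it as immediate from the preceding theorem together with the observation that the homeomorphism $\tau$ (your $\tau_g : f \mapsto |g|\,(|f|+|g|)^{-1}$) is a unit in $C(X)$ arising from the $G(1)$-action, which is exactly your first paragraph. Your handling of local convexity likewise mirrors the paper's own unproved remarks that the functionals $\tau : C(X) \to [0,1]$ separating points present $X$ (with its balanced zero sets and affine $G(1)$-action) as a locally convex space; if anything, you are more candid than the paper in flagging that this step rests on assertion rather than on a constructed convex neighbourhood base.
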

In addition, since $X$ is a compact space (even for pseudocompact space) we have $C(X) = C_b(X)$. When $X$ is not compact the homeomorphism may be unbounded. The following corollaries in \cite{Gillman1960} cover both cases.
\begin{defx}
A topological space $X$ is said to be pseudocompact if its image under any continuous real-valued function is bounded.
\end{defx}
\begin{coro}
$X$ is pseudocompact if and only if it contains no $C(X)$-embedded copy of $\mathbb{N}$.
\end{coro}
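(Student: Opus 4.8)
The plan is to prove both implications, establishing the harder direction by contraposition and invoking the $C(X)$-embedding Theorem proved above as the key tool.

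For the easy direction, assume $X$ is pseudocompact and suppose toward a contradiction that $S \subset X$ is a $C(X)$-embedded copy of $\mathbb{N}$. Enumerate $S = \{x_n : n \in \mathbb{N}\}$; since $S$ inherits the discrete topology of $\mathbb{N}$, the map $h : S \to \R$ with $h(x_n) = n$ lies in $C(S)$. By $C(X)$-embeddedness it extends to some $H \in C(X)$, and $H$ is unbounded because $H(x_n) = n$ for every $n$. This contradicts pseudocompactness, so no such $S$ can exist.

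For the converse I would argue the contrapositive: if $X$ is not pseudocompact I would produce a $C(X)$-embedded copy of $\mathbb{N}$. Failure of pseudocompactness yields some $f \in C(X)$ with unbounded image; replacing $f$ by $|f|$ if necessary, I may assume $f \geq 0$. I would then choose points $x_n \in X$ recursively so that the values $f(x_n)$ are distinct and tend to infinity, say $f(x_{n+1}) > f(x_n) + 1$, and set $S = \{x_n : n \in \mathbb{N}\}$. The image $D = \{f(x_n) : n \in \mathbb{N}\}$ is then a closed, discrete, countably infinite subset of $\R$, since a sequence tending to $+\infty$ has no finite accumulation point and each of its terms is isolated.

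The crux is to verify that $f$ restricts to a homeomorphism of $S$ onto $D$, so that the $C(X)$-embedding Theorem applies with $\tau = f$. Continuity and bijectivity of $f|_S$ are immediate from the construction; the remaining point is that $S$ is discrete and $f|_S$ is open onto $D$. I would obtain this from the separation of the values: for fixed $m$, only finitely many of the $f(x_n)$ lie within distance $1$ of $f(x_m)$, so a sufficiently small interval $(f(x_m) - \varepsilon, f(x_m) + \varepsilon)$ pulls back under $f$ to an open neighbourhood of $x_m$ meeting $S$ only in $x_m$. Hence $S$ is a copy of $\mathbb{N}$ and $f|_S : S \to D$ is a homeomorphism onto a closed subset of $\R$; by the Theorem above $S$ is $C(X)$-embedded, which is the required $C(X)$-embedded copy of $\mathbb{N}$ and completes the contrapositive. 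I expect the main obstacle to be precisely this construction step, namely choosing the $x_n$ so that $f|_S$ is simultaneously injective, proper onto a closed set, and a homeomorphism, rather than the two extension arguments, which follow directly once the earlier embedding Theorem is invoked.
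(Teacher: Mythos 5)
Your proof is correct and takes exactly the route the paper intends: the paper states this corollary without writing out a proof (citing \cite{Gillman1960}) as a consequence of the preceding embedding theorem, and your two-directional argument — extending $x_n \mapsto n$ to get an unbounded function for the easy half, and for the converse building a sequence with $f(x_{n+1}) > f(x_n) + 1$ so that $S$ is discrete, $D = f(S)$ is closed in $\R$, and $f|_S$ is a homeomorphism to which the theorem applies — is precisely the standard Gillman--Jerison derivation. The step you flag as the crux is indeed sound: the separation of the values forces each $\{x_m\}$ to be open in $S$, so $S$ is a copy of $\mathbb{N}$ and the theorem yields its $C(X)$-embedding.
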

\begin{coro}
Let $E \subset X$, suppose that some function $\tau \in C(X)$ is unbounded on $E$. Then $E$ contains a copy of $\mathbb{N}$, $C(X)$-embedded in $X$, on which $\tau \to \infty$.
\end{coro}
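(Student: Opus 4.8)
The plan is to extract from $E$ a sequence of points along which $\tau$ diverges, to recognize this sequence as a discrete copy of $\mathbb{N}$ separated by $\tau$ itself, and then to invoke the $C(X)$-embedding criterion proved above, using $\tau$ as the homeomorphism onto a closed subset of $\R$ that the criterion requires.

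First, since $\tau$ is unbounded on $E$, I would choose points $x_n \in E$ with $|\tau(x_n)| \geq n$. As infinitely many of these values share a common sign, I may pass to a subsequence along which $\tau(x_n)$ is strictly monotone and divergent, say $\tau(x_n) \to +\infty$ (the case $\tau(x_n) \to -\infty$ is symmetric). Writing $a_n = \tau(x_n)$ and $S = \{x_n : n \in \mathbb{N}\}$, the points $x_n$ are pairwise distinct since the $a_n$ are, and $\tau \to \infty$ along $S$, which already settles the last assertion.

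Next I would show that $S$ is a copy of $\mathbb{N}$ and that $\tau|_S$ is a homeomorphism onto a closed subset of $\R$. The image $H = \{a_n : n \in \mathbb{N}\}$ is monotone and divergent, so every bounded interval of $\R$ contains only finitely many $a_n$; hence $H$ has no accumulation point in $\R$ and is a closed, discrete subspace. Continuity of $\tau$ makes $\tau|_S \colon S \to H$ continuous, and injectivity is clear. For each $n$, choosing an interval about $a_n$ that excludes every other $a_m$ and pulling it back under $\tau$ yields an open set of $X$ whose trace on $S$ is exactly $\{x_n\}$; thus $S$ carries the discrete topology and $\tau|_S$ is a continuous bijection between discrete spaces, i.e.\ a homeomorphism of $S$ onto the closed set $H \subset \R$. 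In particular $S$ is homeomorphic to the countable discrete space $\mathbb{N}$.

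Finally, with $\tau \in C(X)$ realizing $S$ as a homeomorphic copy of the closed set $H \subset \R$, the theorem above applies directly to conclude that $S$ is $C(X)$-embedded in $X$. The step I expect to be the main obstacle is the discreteness of $S$ together with the closedness of $H$ in $\R$: it is not enough that $\tau(x_n) \to \infty$, one must also ensure the values $a_n$ are spread out enough for $\tau$ to isolate each $x_n$ inside $S$ and for $H$ to accumulate nowhere in $\R$. The passage to a strictly monotone, divergent subsequence is precisely what secures both, after which continuity of $\tau$ and the embedding theorem finish the argument.
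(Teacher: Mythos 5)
Your proof is correct, but note that the paper itself contains no proof of this corollary: it is quoted from Gillman--Jerison \cite{Gillman1960} (together with the pseudocompactness corollary preceding it), so there is no internal argument to compare against. What you have written is the standard derivation, and it fits the paper's logical structure exactly, since it runs through the one theorem the paper \emph{does} prove in this subsection: you extract $x_n \in E$ with $|\tau(x_n)| \geq n$, pass to a strictly monotone divergent subsequence $a_n = \tau(x_n)$, observe that $H = \{a_n\}$ is closed and discrete in $\R$, pull back isolating intervals to see that $S = \{x_n\}$ is discrete and that $\tau|_S \colon S \to H$ is a homeomorphism, and then invoke the $C(X)$-embedding criterion (homeomorphism in $C(X)$ onto a closed subset of $\R$) to conclude $S$ is $C(X)$-embedded. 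All of these steps are sound; in particular the criterion does not require $S$ to be closed in $X$, so no closedness of $S$ needs to be checked. The one point worth flagging is the sign issue you wave at with ``symmetric'': if $\tau$ is bounded above on $E$ but unbounded below, no subset of $E$ can have $\tau \to +\infty$, so the corollary as literally stated must be read as $|\tau| \to \infty$ (which is how Gillman--Jerison state it); your monotone-subsequence construction delivers exactly that, e.g.\ by running the positive case on $|\tau| \in C(X)$. With that reading made explicit, your argument correctly and completely fills the gap the paper leaves to citation.
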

The two corollaries show the importance of a \emph{lattice} to the ergodicity of the resulting groupoid. Thus, we require the compactness of $X$, or at least a local compactness, to guarantee that $\tau$ is a homeomorphism $X \to [0,1] \simeq \mathbb{T}$ which $C(X)$-embeds closed subsets $[a,b]$ of $\R$ in $X$. Thus, the general Borel transformations can be modelled on the homeomorphism $\tau$ approximated by the action $(0,1] \times X \to X$, where $((t,x),y)$ is the graph of a continuous transformation $g(t) : x \mapsto y$.
\begin{rem}
We therefore make the following remarks. \\
(1) The action is a presentation of $X \times [0,1]$ as a locally convex topological vector space by the continuous functionals $\tau : C(X) \to [0,1]$ separating its points. \\ (2) With this presentation and the notion of $C(X)$-embedding, we see that $\tau$ defines a norm, and hence, a metric on $X$, which brings us to measures on $X$, and the decomposition of a finite (non-ergodic) Borel measure $\mu$ on $X$ with respect to an ultrafilter $Z[\mathfrak{m}_x]$, and the generating set $\mathfrak{B} = \{U_f : f \in \mathfrak{m}_x, x \in X\}$ of the $\sigma$-algebra $\mathcal{B}$.
\end{rem}

\section{The Generalized Space and Dynamics}
The above action shows that $G(1)$ is a Borel group. The dynamical system defined by the Borel group $G(1)$ is by its ergodic action generalized in the action of the algebra/lattice $C(X)$) on the generalized space $\mathcal{M}(X)$, the space of nonnegative Radon measures on $X$. According to \cite{EinsWard2011}, the time evolution of dynamical systems modelled by measure-preserving actions of integers $\mathbb{Z}$ or real numbers $\R$ which represent passage of time are generalized by measure-preserving actions of \emph{lattices} which are usually "subgroup" of Lie groups.

Each maximal ideal $\mathfrak{m}_x$ characterizes and encodes the symmetries of the measurable functions vanishing at each point of $X$ and on its respective neighbourhoods. The symmetry is represented by the $z$-ultrafilter of zero sets (algebraic varieties of $C(X)$) converging to each $x \in X$. The complements of these algebraic sets constitute the open neighbourhood base of points of $X$. The ultrafilter $\mathcal{F}$ convergence to $x$ has associated nets of measurable functions converging to a fixed point function $f$ defined on $x$. This follows from the correspondence between filters and their derived nets. Given a net $f_\alpha$ of contractions in the complete metric space $X$, as described in \cite{HasselblattKatok}, it follows that $f_\alpha \to f$ such that $f(x) = x$. We will represent all these on the generalize space $\mathcal{M}(X)$ with ergodic action of $C(X)$. But we need to recall the ideas of a generalized subset and an ergodic subgroup.

Mackey's conception of a measure class $C$ as a \emph{generalized subset} was extended to the whole space $\mathcal{M}(X)$ of Radon measures on $X$ which is conceived as the \emph{generalized space} of points or \emph{state space} (see \cite{EinsWard2011}). At the centre of this extension is the focus on (i) measure preserving transformations of the compact metric space $X$, and (ii) the Dirac measures $\delta_x$ as generalized or geometric points isomorphic to points of $X$. (The points can also be said to constitute the discrete subgroup $\Gamma$ by their association to $\mathfrak{m}_x$). The role assigned to the ergodic transformations by Mackey, is to translate along time in such a way as to ensure the invariance of measure or state.

Recall the surjective homomorphism $T_x : C(X) \to [0,1)$ which has the maximal ideal $\mathfrak{m}_x$ as its kernel. It follows that a continuous linear transformation $\varphi$ on $X$, by its relation to $C(X)$ as shown above, induces another continuous transformation on the generalized space $\varphi_* : \mathcal{M}(X) \to \mathcal{M}(X)$, and thereby defines a representation of the algebra $C(X)$ on the $\mathcal{M}(X) \simeq L^\infty(X,\mu)$. Notice that the replacement of $\R$ with $[0,1)$ makes all transformations of $X$ contractions.  This translates the whole structure associated to $x \in X$, (the zero sets, the null sets which coincide with the zero sets, and the maximal ideal) from one point to the other.

Thus, the restriction of $\varphi_*$ to the subset $\mathcal{U} = \{\delta_x : x \in X\}$ gives a transformation of $\mathcal{U}$ defined as $\varphi_* : \delta_x \mapsto \delta_{\varphi(x)}$, such that for any $A \subseteq X$, we have \[ (\varphi_*\delta_x)(A) = \delta_x(\varphi^{-1}A) = \delta_{\varphi(x)}(A). \] This restriction shows that the subset $\mathcal{U} = \{\delta_x : x \in X\} \subset \mathcal{M}(X)$ of the generalized points, can be continuously and affinely extended to the generalized space $\mathcal{M}(X)$. Subsequently, if the generalized points $\mathcal{U} = \{\delta_x : x \in X\}$ can generate the generalized space $\mathcal{M}(X)$, then the generalized subsets $\{ C_f : f \in \mathfrak{m}_x\}$ (the measure classes) can be generated from the generalized points, the Dirac measures $\delta_x$. We give this as a result.
\begin{prop}
The generalized space $\mathcal{M}(X)$ of Radon measures on $X$ is an affine and continuous extension of the geometric points $\mathcal{U} := \{\delta_x : x \in X\}$.
\end{prop}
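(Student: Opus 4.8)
The plan is to realize $\mathcal{M}(X)$ as a weak-$*$ closed convex subcone of the dual space $C(X)^*$ and to exhibit $\mathcal{U}$ as its extreme boundary, so that the ``extension'' asserted is precisely the passage from the extreme points to their closed affine (barycentric) hull. First I would invoke the Riesz representation theorem for the compact metric space $X$ to identify each nonnegative Radon measure $\mu$ with a positive continuous linear functional $f \mapsto \int_X f\, d\mu$ on $C(X)$, equipping $\mathcal{M}(X)$ with the weak-$*$ topology it inherits from $C(X)^*$. In this topology the evaluation map $j : X \to \mathcal{M}(X)$, $x \mapsto \delta_x$, is continuous because $x_n \to x$ forces $\int_X f\, d\delta_{x_n} = f(x_n) \to f(x) = \int_X f\, d\delta_x$ for every $f \in C(X)$; injectivity and continuity of the inverse follow from the complete regularity of $X$ together with the point-separation property of the family $\mathfrak{F}$ recorded in the Remark, so $j$ is a homeomorphism of $X$ onto $\mathcal{U}$.

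Next I would establish the affine structure. The subset $P(X) \subset \mathcal{M}(X)$ of probability measures is convex and, by the Banach--Alaoglu theorem together with the compactness of $X$, weak-$*$ compact. A short separation argument shows that the extreme points of $P(X)$ are exactly the Dirac masses, i.e. $\mathrm{ext}\,P(X) = \mathcal{U}$. The Krein--Milman theorem then yields $P(X) = \overline{\mathrm{conv}}(\mathcal{U})$ (weak-$*$ closure), so that every probability measure is a weak-$*$ limit of finite convex combinations $\sum_i \lambda_i \delta_{x_i}$ of geometric points. More sharply, for each $\mu \in P(X)$ one has the barycentric identity
\[
\int_X f\, d\mu \;=\; \int_X \Big(\!\int_X f\, d\delta_x\Big)\, d\mu(x), \qquad f \in C(X),
\]
which expresses $\mu = \int_X \delta_x\, d\mu(x)$ as an affine (integral) combination of the $\delta_x$ in the weak-$*$ sense. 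A general nonnegative measure is recovered by the scaling $\mu = \mu(X)\cdot \mu_1$ with $\mu_1 = \mu/\mu(X) \in P(X)$ when $\mu \neq 0$, so the positive cone $\mathcal{M}(X)$ is the closed affine (conical) hull of $\mathcal{U}$.

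Finally I would record the continuity of the extension in the form needed for the dynamics of the previous section. Because finite convex combinations of Dirac masses are weak-$*$ dense in $P(X)$ and $\mathcal{M}(X)$ is the scaled hull of $P(X)$, any weak-$*$ continuous affine map on $\mathcal{M}(X)$ is determined by its restriction to $\mathcal{U}$; conversely the assignment $\delta_x \mapsto \delta_{\varphi(x)}$ extends by affinity and weak-$*$ continuity, via $\varphi_*\big(\sum_i \lambda_i \delta_{x_i}\big) = \sum_i \lambda_i \delta_{\varphi(x_i)}$ on the dense subset, to the pushforward $\varphi_*$ already defined on all of $\mathcal{M}(X)$. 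This is exactly the assertion that $\mathcal{M}(X)$ is the affine and continuous extension of the geometric points $\mathcal{U}$.

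The main obstacle I anticipate is not the existence of the convex representation but its \emph{naturality and well-definedness}: one must check that the barycentric integral $\int_X \delta_x\, d\mu(x)$ genuinely lands back in $\mathcal{M}(X)$ rather than merely in $C(X)^*$, and that the affine extension of a map prescribed on $\mathcal{U}$ is independent of the chosen representation $\sum_i \lambda_i \delta_{x_i}$ --- a point resting on weak-$*$ continuity together with the separation property of $\mathfrak{F}$. The other delicate step is continuity of $j^{-1}$, i.e. that weak-$*$ convergence $\delta_{x_n} \to \delta_x$ forces $x_n \to x$ in $X$; here the complete regularity of $X$, equivalently the embedding $\tau : X \to [0,1]$ constructed earlier, must be used explicitly.
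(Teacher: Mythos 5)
Your proposal is correct, but it takes a genuinely different route from the paper's own argument. You work entirely inside classical functional analysis: Riesz representation identifies $\mathcal{M}(X)$ with the positive cone in $C(X)^*$, Banach--Alaoglu and Krein--Milman identify $\mathcal{U}$ with the extreme points of the weak-$*$ compact convex set $P(X)$ and give $P(X) = \overline{\mathrm{conv}}(\mathcal{U})$, the barycentric identity $\mu = \int_X \delta_x \, d\mu(x)$ sharpens this to an integral representation, and the pushforward $\varphi_*$ is recovered as the unique affine, weak-$*$ continuous extension of $\delta_x \mapsto \delta_{\varphi(x)}$ from the dense set of convex combinations. The paper instead stays within its own machinery: it argues that the coincidence of the zero sets of $C(X)$ with the null sets of measures produces nets $\{\mu_\alpha\}$ of non-ergodic measures converging to $\delta_x$ as the $z$-ultrafilter $Z[\mathfrak{m}_x]$ converges to $x$, and then presents $\mathcal{M}(X)$ as ``generated'' from the base $\mathcal{U}$ by the scaling and decomposition $G(1)$-action on fibres of measure classes ($\mu_f \mapsto \mu_{gf}$ preserving $Z(f)$), with $\varphi_*$ supplying the affine continuous extension. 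What your route buys is precision and self-containedness: each step is a named theorem, ``affine'' and ``continuous'' acquire exact meanings (closed conical hull; density plus uniqueness of extensions), and the obstacles you flag (well-definedness of the barycentric integral, continuity of $j^{-1}$) are real but resolved exactly as you indicate, the latter even more simply since a continuous bijection from the compact space $X$ onto the Hausdorff space $\mathcal{U}$ is automatically a homeomorphism. What the paper's route buys is continuity with the rest of its program: the nets of non-ergodic measures with ergodic (Dirac) limits, the fibration of $\mathcal{M}(X)$ into measure classes over $\mathcal{U}$, and the $G(1)$-action are precisely the ingredients reused in the ergodic-limit theorem and the measure groupoid construction of the later sections, though at the cost of leaving the key step --- that the decomposition action ``generates'' $\mathcal{M}(X)$ --- without the convex-analytic justification your argument supplies.
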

\begin{proof}
The coincidence of zero sets of $C(X)$ with null sets of $\mathcal{M}(X)$ establishes the existence of nets $\{\mu_\alpha\}$ of non-ergodic Radon measures related to $\{\mu_f : f \in \mathfrak{m}_x, x \in X\}$ which converge to the Dirac measures $\{\delta_x : x \in X\}$ as the $z$-ultrafilter $Z[\mathfrak{m}_x]$ converges $x$. Since the elements of $\mathfrak{m}_x$ vanish at $x$, its $G(1)$-action is transferred to fibre of measure classes (or tangent measures to $\delta_x$) via $\varphi_*$ (similar to slice representation in group action).

The transformations $\varphi_* : \mathcal{M}(X) \to \mathcal{M}(X)$ are the natural continuous and affine extensions of the generalized points $\mathcal{U} = \{\delta_x : x \in X\}$ to the generalized space $\mathcal{M}(X)$. This is shown by considering the generalized space $\mathcal{M}(X)$ as generated by the decomposition action of the $C(X)$-modules $\mathfrak{m}_x$, whereby $\mu_f \mapsto \mu_{gf}$ preserves the null set $Z(f)$. This decomposition is identical with the scaling $G(1)$ action which generates $\mathcal{M}(X)$ on the base space $\mathcal{U} = \{\delta_x : x \in X\}$, which is invariant under the $G(1)$-action on $X$. Thus, the Dirac measures $\delta_x \in \mathcal{U}$ constitute the geometric/generalized points of the generalized space $\mathcal{M}(X)$.
\end{proof}
These lead to complementary ways of understanding the concept of ergodicity; which is characterized in different ways for a measure and for a transformation, in \cite{EinsWard2011}. For a measure space $(X,\mathcal{B},\mu)$ with a transformation $\varphi : X \to X$, it means indecomposability of $X$ into two $\varphi$-invariant measurable subsets. But for the transformation $\varphi$ it is defined as follows.
\begin{defx}\cite{EinsWard2011}
A measure preserving transformation $\varphi : X \to X$ of a probability space $(X,\mathcal{B},\mu)$ is ergodic if for any $B \in \mathcal{B}, \varphi^{-1}B = B \; \implies \; \mu(B) = 0$, or $\mu(B) = 1$. Then a $\varphi$-invariant measure $\mu$ is also called ergodic.
\end{defx}
Hence, a non-ergodic measure is infinitely decomposable with respect to a measure-preserving transformation $\varphi$; such that a set of $\varphi$-invariant measures contains the ergodic measures which is $\varphi$-indecomposable as its boundary points. Thus, a closed subset of $\varphi$-invariant measures in $\mathcal{M}(X)$ is either a singleton (when the measure is ergodic) or an infinite set when it contains non-ergodic measures. In the latter, the boundary points are ergodic. Each boundary point makes a complete ergodic meaning with the infinite net converging to it. This (net) completeness defines the nature of the point.

Thus, the dynamism defined by the transformation $\varphi$ is encoded in the symmetry of the measures classes contributing to the convergent net. Hence, the connection between ergodic theory and the dynamics defined by continuous transformations on compact metric spaces, according to \cite{EinsWard2011}, is captured by the closure of the resulting convex set of non-ergodic $\varphi$-invariant measures with ergodic measures as boundary. This also means the convergence of every net in the set. The connection is therefore based on the ergodicity of both measures and transformations. The following is an important result of this section.
\begin{thm}
The action $\varphi$ of the commutative algebra $C(X)$ on the generalized space $\mathcal{M}(X)$ by the Borel group $G(1)$ and the maximal ideals $\mathfrak{m}_x$ has an ergodic limit.
\end{thm}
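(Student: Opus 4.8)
The plan is to read the phrase \emph{ergodic limit} as the assertion that the net of non-ergodic $\varphi$-invariant Radon measures generated by the $G(1)$-action and the maximal ideals $\mathfrak{m}_x$ converges, in the weak-$*$ topology, to an ergodic measure lying on the boundary of the convex set of invariant measures, and to exhibit this limit concretely as a Dirac measure at a fixed point. First I would fix the ambient structure: by the Riesz representation theorem $\mathcal{M}(X)$ is identified with a subset of $C(X)^*$, so that on the set $\mathcal{M}_1(X)$ of probability measures the weak-$*$ topology is compact by Banach--Alaoglu, using the compactness of $X$. Inside $\mathcal{M}_1(X)$ the set $\mathcal{M}_\varphi(X)$ of $\varphi$-invariant measures is nonempty (by Krylov--Bogolyubov, since $X$ is compact and $\varphi$ continuous), convex, and weak-$*$ closed, hence compact; by the standard characterization its extreme points are exactly the ergodic measures, which recovers the boundary description already quoted from \cite{EinsWard2011}.

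Next I would exploit the contraction structure emphasized earlier. Since the replacement of $\R$ by $[0,1)$ renders every transformation of $X$ a contraction, and since $\tau$ furnishes a metric on the complete space $X$, the Banach fixed-point theorem yields a \emph{unique} fixed point $x_* \in X$ together with $\varphi^n(y) \to x_*$ for every $y \in X$. This is precisely the contraction-net convergence $f_\alpha \to f$, $f(x)=x$, invoked from \cite{HasselblattKatok}, and it matches the convergence of the $z$-ultrafilter $Z[\mathfrak{m}_{x_*}]$ to $x_*$. I would then transfer this to the level of measures through the affine continuous extension established in the preceding Proposition: because $\varphi_*$ is affine and weak-$*$ continuous with $\varphi_* \delta_x = \delta_{\varphi(x)}$, the pushforward iterates satisfy $\varphi_*^{\,n}\mu \to \delta_{x_*}$ in the weak-$*$ topology for every $\mu \in \mathcal{M}_1(X)$, so that $\delta_{x_*}$ is the common limit of the nets of generically non-ergodic measures produced by the $G(1)$-scaling action $\mu_f \mapsto \mu_{gf}$.

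To close the argument I would verify that the limit is ergodic. For any $\varphi$-invariant Borel set $B$ one has either $x_* \in B$ or $x_* \in B^c$, whence $\delta_{x_*}(B) \in \{0,1\}$; thus $\delta_{x_*}$ is an ergodic, indeed extreme, point of $\mathcal{M}_\varphi(X)$, and the net of non-ergodic invariant measures has its boundary limit exactly at this point. Identifying this extreme point with the geometric point $\delta_{x_*} \in \mathcal{U}$ then realizes the ergodic limit as one of the generalized points that generate $\mathcal{M}(X)$.

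I expect the main obstacle to be the identification of the algebraic decomposition with the measure-theoretic ergodic decomposition: one must show that the $G(1)$-action $\mu_f \mapsto \mu_{gf}$ on measure classes, which preserves the null sets $Z(f)$, is compatible with the barycentric (Choquet) decomposition of invariant measures over the ergodic extreme points, so that the algebraically defined net and the ergodic-theoretic boundary genuinely coincide. A secondary difficulty is ensuring convergence of the whole net to a single ergodic measure rather than merely the existence of convergent subnets; here the uniqueness of the contraction fixed point $x_*$, together with the affine continuity of $\varphi_*$, is what pins the limit down.
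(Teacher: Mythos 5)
Your proposal is essentially correct (granting the paper's standing claim that the passage to $[0,1)$-valued functions makes every transformation of the complete metric space $X$ a contraction), but it takes a genuinely different route from the paper's own proof. The paper's proof is purely conceptual and contains no analysis: it recalls from Einsiedler--Ward that ergodic theorems relate averages along orbits to averages over space, identifies the time average with the iteration $\varphi_*\delta_x = \delta_{\varphi(x)}$ on $\mathcal{U}\simeq X$ and the space average with the induced action on measure classes $C_f \mapsto C_{f\circ\varphi}$, and then simply asserts that the induced iteration on $\mathcal{M}(X)$ is a net of non-ergodic invariant measures converging to an ergodic limit, the required invariances having been established in the earlier $G(1)$-propositions. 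It never constructs the limit, never proves any convergence, and never verifies ergodicity of the limit. You do all three: you topologize the problem (Riesz representation and Banach--Alaoglu on $\mathcal{M}_1(X)\subset C(X)^*$), secure the existence and structure of invariant measures (Krylov--Bogolyubov; extreme points are exactly the ergodic measures), and use the contraction hypothesis with the Banach fixed-point theorem to produce $x_*$ and show $\varphi_*^n\mu \to \delta_{x_*}$ weak-$*$ for every probability $\mu$, ergodicity of $\delta_{x_*}$ being immediate since a Dirac measure takes only the values $0$ and $1$ on invariant sets. What your approach buys is a precise statement with an explicit limit and full-net convergence pinned down by uniqueness of the fixed point; what it exposes is a tension in the paper's own framing, which you correctly flag in your obstacle paragraph: under the strict-contraction reading, $\delta_{x_*}$ is the \emph{unique} $\varphi$-invariant probability, so the paper's picture of a large convex set of non-ergodic invariant measures with ergodic boundary collapses to a point, and the approximating measures $\varphi_*^n\mu$ are not themselves invariant --- only their limit is. Your reading is the internally consistent one, and your argument strictly strengthens what the paper actually proves.
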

\begin{proof}
According to \cite{EinsWard2011}, ergodic theorems express a relationship between averages taken along the orbit of a point under the iteration of a measure-preserving map/tansformation $\varphi : X \to X$. The iteration of the transformation on $X \simeq \mathcal{U}$ represents passage of time, and its invariance $\varphi : x \mapsto \varphi(x) \simeq \varphi_*(\delta_x) = \delta_{\varphi(x)}$ represents an average over time.

The induced iteration on the generalized space $\mathcal{M}(X)$ (which is on the fibres) with respect to some invariant measure $\mu$ (or measure class $\varphi_* : C_f \to C_{f\circ \varphi}$) represents the states, which is the net of invariance non-ergodic measures converging to an ergodic limit; the ergodicity represents average over (state) space (averages taken over the classes of measures/states). We have shown that these two averages are defined by invariance of $\mathcal{U}$ and the measure classes $\{C_f\}$ under $G(1)$-actions.
\end{proof}

\begin{rem}
These two averages over \emph{time} and over \emph{states} respectively are of physical \emph{observables} represented by measurable functions. Thus, the replacement of measurable functions with (bounded) operators on a Hilbert space $\mathcal{H}$ leads to quantum mechanical system. In quantum mechanical system, the states of a physical system are represented by vectors in a Hilbert space $\mathcal{H}$. Non-uniqueness of the representation leads to projective spaces in such systems, which relate to integral representations of elements $x \in X$ by a set (convergent sequence or net) of Radon measures.
\end{rem}
In relation to the continuous linear transformations $\varphi : X \to X$, it is also the case that every measure preserving transformation always decomposes into ergodic components. Thus, apart from its connection to dynamics, ergodicity is also connected to the 'noncommutativity' of the quantum system. This is based on the fact that the "noncommutative spaces" replacing the "phase space" are basically quotient spaces determined by ergodic actions of a Borel group. Hence, we have replaced $X$ with the $G(1)$-space $\mathcal{U}$. Within this quotient setting, it is clear that non-ergodic transformations are constituted by ergodic components, which we also consider as limits of nets of the former.

\subsection{General Ergodic Actions of Virtual Groups}
Every algebra has an underlying action of a group, subgroup or (inverse) semigroup. Hence, every algebra can be realized as groupoid algebra. Ergodic actions characterize (measure-preserving) algebras which are generalized by von Neumann algebras. Mackey \cite{Mackey66}, through relaxing the topological structure to a Borel structure, extended the relationship between transitive actions of a group $G$ and its subgroups to a relation between ergodic actions of a Borel group and its virtual subgroups.
\begin{defx}\cite{Mackey66}
A Borel $G$-space is a space $S$ with the action $(s,g) \mapsto sg$ defining a Borel function from $S \times G \to S$.
\end{defx}
Assuming $\mu$ is a measure defined on a $G$-space $S$, that is $\mu : S \to \R_+$. Then $\mu$ is $G$-invariant if $\mu(Bg) = \mu(B) \; \forall \; g \in G$ and all Borel subset $B$ of $S$. The measure $\mu$ is \emph{quasi invariant} if for all $g \in G, \mu(Bg) = 0 \;  \implies \mu(B) = 0$. Since two measures $\mu, \nu$ are in the same measure class $C$ if $N(\mu) = N(\nu)$, a measure class $C$ is invariant if the measure $\nu$ defined by $\nu(B) = \mu(Bg)$ is in the class $C$ whenever $g \in G$ and $\mu \in C$.
\begin{prop}\cite{Mackey66}
The coset space $S_H = G/H$ admits a unique invariant measure class whenever $H$ is closed subgroup of $G$; but invariant measure only for special choices of $H$.
\end{prop}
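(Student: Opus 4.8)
The plan is to realise the measure class on $S_H = G/H$ through Haar measure on the ambient group, proving existence of a quasi-invariant measure first, then uniqueness of its class, and finally isolating the modular obstruction that decides when the class contains a genuinely invariant representative. Throughout I would work in the locally compact second countable model underlying Mackey's Borel framework, where left Haar measures $dg$ on $G$ and $dh$ on $H$ exist and are unique up to scaling, with modular functions $\Delta_G$ and $\Delta_H$; the reduction of the purely Borel statement to this model is the standard identification of a standard Borel $G$-space with its topological realisation, so no generality is lost.

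First I would construct a continuous strictly positive \emph{rho-function} $\rho : G \to \R$ obeying the functional equation $\rho(gh) = \bigl(\Delta_H(h)/\Delta_G(h)\bigr)\,\rho(g)$ for all $g \in G$, $h \in H$. Such a $\rho$ is obtained from a Bruhat partition-of-unity function on $G$ by averaging against $dh$ over the fibre; this is the one place where local compactness is used essentially. With $\rho$ fixed I would define a measure $\mu_\rho$ on $G/H$ by the Weil quotient-integration formula
\[
\int_G f(g)\,\rho(g)\,dg \;=\; \int_{G/H}\!\Big(\int_H f(gh)\,dh\Big)\,d\mu_\rho(gH), \qquad f \in C_c(G),
\]
the functional equation for $\rho$ being exactly what makes the inner integral depend only on the coset $gH$, so that $\mu_\rho$ is well defined. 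A change of variables in this identity then exhibits $\mu_\rho$ as quasi-invariant, with Radon--Nikodym cocycle given by a ratio of the form $\rho(xg)/\rho(x)$; by the definition preceding the proposition this means precisely that the class $C$ containing $\mu_\rho$, determined by $N(\mu_\rho)$, is $G$-invariant.

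Next I would prove uniqueness of the class. Given any quasi-invariant measure $\nu$ on $G/H$, I would lift it to a measure $\tilde\nu$ on $G$ by fibring over the cosets with the Haar measure $dh$ on each fibre $H$, i.e. $\int_G f\,d\tilde\nu = \int_{G/H}\!\int_H f(gh)\,dh\,d\nu(gH)$. One checks that $\tilde\nu$ is quasi-invariant under translation on $G$, hence equivalent to Haar measure by the uniqueness of the Haar measure class. Running the same construction on $\mu_\rho$ gives $\tilde\mu_\rho \sim dg \sim \tilde\nu$, and since the two lifts carry the \emph{same} fibre measure $dh$, disintegration forces $\nu \sim \mu_\rho$ on $G/H$. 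Thus every quasi-invariant measure lies in the single class $C$, which is therefore unique.

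Finally I would settle the invariance clause. The representative $\mu_\rho$ is genuinely $G$-invariant exactly when its cocycle $\rho(xg)/\rho(x)$ is identically $1$, that is when $\rho$ may be taken constant; the functional equation then collapses to $\Delta_G(h) = \Delta_H(h)$ for all $h \in H$. Hence an invariant measure exists in $C$ if and only if $\Delta_G|_H = \Delta_H$ --- the ``special choices'' of $H$, for instance $H$ normal, $H$ compact, or $G$ and $H$ both unimodular --- and otherwise the class consists only of strictly quasi-invariant measures. The step I expect to be the main obstacle is the construction of $\rho$ together with the verification of the Weil formula, since that identity is what simultaneously yields well-definedness, quasi-invariance, and the modular criterion; once the quotient-integration formula is in hand, the remaining assertions reduce to formal computations with the cocycle.
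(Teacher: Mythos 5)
The paper offers no proof of this proposition at all: it is stated as a quotation from Mackey \cite{Mackey66} and is immediately put to use in motivating invariant measure classes and virtual subgroups, so there is no internal argument to compare yours against. Judged on its own, your proof is correct and is essentially the classical argument from the cited literature (Mackey's induced-representation papers, Bourbaki, Folland): a Bruhat function yields a rho-function, the Weil quotient-integration formula yields $\mu_\rho$, the cocycle $\rho(xg)/\rho(x)$ gives quasi-invariance, lifting an arbitrary quasi-invariant measure back to $G$ gives uniqueness of the class, and the identity $\Delta_G|_H = \Delta_H$ characterizes the existence of a genuinely invariant representative (your instances --- $H$ normal, $H$ compact, both groups unimodular --- are the standard ``special choices''). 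Two points would need tightening in a full writeup. First, when $H$ is noncompact your lift $\tilde\nu$ assigns infinite $dh$-mass to each fibre, so $\tilde\nu$ as written need not be $\sigma$-finite; the disintegration comparison requires weighting the fibre integral by a strictly positive function on $H$ of finite Haar integral, or an exhaustion by compacta. Second, the step ``$\tilde\nu$ is quasi-invariant under all left translations, hence equivalent to Haar measure'' invokes not the uniqueness of Haar measure (a statement about invariant measures) but a separate standard lemma --- that the Haar class is the unique quasi-invariant measure class on $G$, i.e.\ precisely the $H=\{e\}$ case of what you are proving --- so it should be cited or proved as such rather than treated as immediate. Neither point is a structural gap; both are routine repairs.
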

From the foregoing we characterize a virtual (sub)group. The first characteristic of a virtual group $\mathcal{G}$ is that its $\mathcal{G}$-space has only invariant measure class, which is a generalization of invariant measure. Let $G$ be as in above, and $C$ an invariant measure class in the standard $G$-space $S$. Assume non-transitive $G$-action; then $B$ is invariant subset of $S$, and $S-B$ is also invariant subset. Thus, $S = B\oplus (S-B)$ is the direct sum of $G$-spaces. If $B$ and $S-B$ are not of measure zero, we obtain a non trivial invariant measure class in $B$ and $S-B$ by taking any $\mu \in C$ and then taking the class of its restriction to $B$ and $S-B$ respectively. Hence, $(S,C)$ is realized as the direct sum of two invariant subsystems.

The second characteristic of a virtual (sub)group $G$ is that it acts intransitively or transitively on a space of measure zero. Thus, the invariant (sub)system $(S,C)$ is constructed, according to Mackey, in such a way that: (1) Every measurable invariant subset of $S$ is either of measure zero or the complement of a set of measure zero; (2) Every invariant subset of $S$ on which $G$ acts transitively is of measure zero.

The requirement of ergodicity (or metric transitivity) which is a more basic, inclusive, and sophisticated concept, ensures direct sum decomposition of $S$. Mackey defined the concept as follows.
\begin{defx}\cite{Mackey66}
An action of $G$ on $S$ is ergodic (or metrically transitive) with respect to a measure class $C$ if (1) holds; namely, every measurable invariant subset of $S$ is either of measure zero or the complement of a set of measure zero. When (1) and (2) hold, the action is \emph{strictly ergodic}.
\end{defx}
Generally, in ergodic system, when a subset is invariant, its complement is also invariant; while one is of $\mu$ measure zero, the other is $\mu$-measurable. Thus, ergodic actions of $G$ give a generalization of the transitive actions; and it is constructed to have one-to-one correspondence with the virtual subgroups of $G$ as the transitive $G$-actions have natural one-to-one correspondence with the conjugacy classes of closed subgroups of $G$. The relationship between a null set and an invariant measure class $C$ is the principle of a virtual subgroup as constructed by Mackey.

Assuming $H$ is a closed subgroup of $G$. Every virtual subgroup of $H$ is a virtual subgroup of $G$ also. So, every ergodic action of $G$ is related to the ergodic actions of $H$. The relation is given as follows. If $C$ is an invariant measure class in the standard Borel $G$-space $S$. Then $S \times G$ is the auxiliary $G$-space. Since $G$ acts on $S$ through $H$ and $G$ acts on $H$, $S \times G$ is made a $H \times G$-space by the definition $(s,g_1)(h,g_2) = (sh,g_2^{-1}g_1h)$. Let $(S \times G)_H$ denote the equivalence classes defined by $H$-action on $S \times G$; then we have \[\phi : S \times G \to (S \times G)_H, \; (s,g) \mapsto (sh,gh), \; \text{for } h \in H. \] Since $G$-action commutes with $H$-action on $(S \times G)_H$, it maps a $H$-equivalence class onto another. Hence, \[\phi(s,g_1)g_2 = \phi(s,g_2^{-1}g_1) = (sh, g^{-1}_2g_1h) \] is a well defined $G$-action on $(S \times G)_H$.

The set of $H$-equivalence classes has a Borel structure by a pullback of the Borel structure on $S \times G$. Thus, $E \subset (S\times G)_H$ is a Borel subset if $\phi^{-1}(E)$ is a Borel set in $S\times G$. This makes $(S \times G)_H$ a standard Borel space. The measure class $C$ on $S$ and the measure class of Haar measures $C_G$ on $G$ combine to give the common measure class $C \times C_G$ on $S\times G$ invariant under $H \times G$-action. A typical measure on $S\times G$ is of the form $\mu \times \nu$, where $\mu \in C, \nu \in C_G$; and $\bar{C}$ is an invariant measure class for the $(S \times G)_H$. Thus, $\bar{\omega} \in \bar{C}$ implies $\omega \in C \times C_G$ varies over the finite measures in $C \times C_G$ and $\phi_*\omega = \bar{\omega}$. The following result now follows on this construction.
\begin{prop}\cite{Mackey66}
The measure class $\bar{C}$ is invariant under $G$-action and is $G$-ergodic if, and only if, $C$ is $H$-ergodic.
\end{prop}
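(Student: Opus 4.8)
The plan is to reduce $G$-ergodicity of $\bar{C}$ to $H$-ergodicity of $C$ by constructing an explicit bijection between the $G$-invariant measurable subsets of $(S\times G)_H$ and the $H$-invariant measurable subsets of $S$, and then checking that this bijection preserves the dichotomy ``null or conull'' that defines ergodicity. The invariance of the class $\bar{C}$ under the $G$-action I would take as already built into the construction (it follows from the quasi-invariance of the Haar class under the $G$-action on $S\times G$), so the substantive content is the stated equivalence.

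First I would analyse the two actions on the auxiliary space $S\times G$ separately. From the formula $(s,g_1)(h,g_2) = (sh, g_2^{-1}g_1 h)$, the $H$-action (take $g_2=e$) is $(s,g)\cdot h = (sh, gh)$, and the $G$-action (take $h=e$) is $(s,g)\cdot g_2 = (s, g_2^{-1}g)$. Since $g_2^{-1}g$ ranges over all of $G$ as $g_2$ does, a subset of $S\times G$ is $G$-invariant if and only if it is a cylinder $A\times G$ for some $A\subseteq S$. Among such cylinders, $H$-invariance of $A\times G$ is equivalent to $H$-invariance of $A$ inside the $G$-space $S$. Because $\phi^{-1}(\bar{E})$ is automatically $H$-saturated and is $G$-invariant precisely when $\bar{E}$ is (the map $\phi$ intertwining the two $G$-actions), it follows that the $G$-invariant measurable sets $\bar{E}\subseteq (S\times G)_H$ correspond bijectively to the $H$-invariant measurable sets $A\subseteq S$ via $\phi^{-1}(\bar{E}) = A\times G$.

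Next I would transport the measure-class data across this bijection. Writing $\bar{\omega} = \phi_*\omega$ with $\omega = \mu\times\nu$, $\mu\in C$ and $\nu$ a finite measure in the Haar class $C_G$, Fubini gives $\bar{\omega}(\bar{E}) = \omega(A\times G) = \mu(A)\,\nu(G)$ with $0<\nu(G)<\infty$. Hence $\bar{E}$ is $\bar{\omega}$-null if and only if $A$ is $\mu$-null, and applying the same computation to the complement $A^c\times G$ shows $\bar{E}$ is conull exactly when $A$ is conull. As these are statements about measure classes, independence of the choices of $\mu$ in $C$ and $\nu$ in $C_G$ follows from mutual absolute continuity. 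Reading the equivalence through the bijection then yields: every $G$-invariant $\bar{E}$ is $\bar{C}$-null or $\bar{C}$-conull if and only if every $H$-invariant $A$ is $C$-null or $C$-conull, which is precisely the assertion that $\bar{C}$ is $G$-ergodic if and only if $C$ is $H$-ergodic.

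The main obstacle I anticipate is not the orbit bookkeeping but the Borel-theoretic bookkeeping: one must verify that $\bar{E}\leftrightarrow A$ is an isomorphism of measurable structures modulo null sets, i.e. that the quotient Borel structure on $(S\times G)_H$ (defined by pullback along $\phi$) renders $\phi$ measurable with a measurable cross-section, so that the $G$-invariant Borel sets downstairs match exactly the $(G,H)$-biinvariant Borel sets upstairs, and that passing to the standard Borel quotient does not corrupt the null/conull dichotomy. This is where the standardness of $(S\times G)_H$ and the quasi-invariance of the Haar class are indispensable, and it is the step I would treat carefully, the set-theoretic identification of invariant sets being otherwise routine.
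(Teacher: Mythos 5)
Your proposal is correct, but there is nothing in the paper to compare it against: the paper does not prove this proposition at all. It is quoted from Mackey, and the surrounding text only reproduces Mackey's construction of the auxiliary space $(S\times G)_H$, its quotient Borel structure, and the class $\bar{C}=\phi_*(C\times C_G)$, after which the result is asserted to ``follow on this construction.'' Your argument supplies exactly the missing content, and it is the canonical one (essentially Mackey's own): since the $G$-action $(s,g)\cdot g_2=(s,g_2^{-1}g)$ is trivial on the first factor and transitive on the second, the $G$-invariant subsets of $S\times G$ are precisely the cylinders $A\times G$; $H$-saturation of such a cylinder is exactly $H$-invariance of $A$; and the Fubini computation $\omega(A\times G)=\mu(A)\,\nu(G)$ with $0<\nu(G)<\infty$, applied also to $A^{c}\times G$, transports the null/conull dichotomy across the bijection $\bar{E}\leftrightarrow A$. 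Together with the observation that $C\times C_G$ is carried into itself by the $G$-action (quasi-invariance of the Haar class under left translation), this gives both the invariance of $\bar{C}$ and the equivalence of ergodicities.

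One remark: your closing worry about the Borel-theoretic bookkeeping is more cautious than necessary. Because the paper (following Mackey) \emph{defines} the Borel structure on $(S\times G)_H$ by declaring $\bar{E}$ Borel exactly when $\phi^{-1}(\bar{E})$ is Borel, and because $\phi^{-1}(\bar{E})$ is automatically $H$-saturated and $\phi$ intertwines the two $G$-actions, the correspondence between $G$-invariant Borel sets downstairs and $H$-invariant Borel sets $A\subseteq S$ via $\phi^{-1}(\bar{E})=A\times G$ is immediate ($A$ is Borel as a section of a Borel set, being the preimage of $A\times G$ under $s\mapsto(s,e)$); no measurable cross-section is needed for this equivalence, and no null/conull information can be ``corrupted'' since $\bar{\omega}(\bar{E})$ is computed by definition as $\omega(\phi^{-1}(\bar{E}))$. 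The standardness of $(S\times G)_H$ and the existence of sections matter elsewhere in Mackey's theory --- namely, to certify that the induced action lives on a legitimate standard Borel $G$-space --- not for the proposition at hand.
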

\begin{rem} From the forgoing, we remark the following. \\
(1) The construction gives a canonical way of associating every ergodic action of a Lie group $G$ to an ergodic action of its closed subgroup $H$; \\ (2) When $S = H/K$ for some closed subgroup $K$ of $H$, making $H$-action transitive; the $G$-action on $(S\times G)_H$ is also transitive and defined by $K \subset G$. \\ (3) The above construction is generalized by looking at the Borel $S$ as a $(\mathcal{G},H)$-equivalence as defined in \cite{MuhRenWil87}, where $\mathcal{G}$ is the transformation groupoid $S \rtimes G$ and $H = \underset{s \in S}\bigcup \mathcal{G}(s,s)$ is a Lie group bundle. In the following we will try to express the symmetries of these formulation using groupoid equivalence.
\end{rem}

\section{The Measure Groupoid and Action}
The foregoing helps us to present the action of the commutative algebra $C(X)$ using the groupoid structure. We have seen that the $C(X)$-action is determined or defined at each point $x \in X$ by the maximal ideal $\mathfrak{m}_x$ and the Borel group $G(1)$. The maximal ideal is a module of the algebra $C(X)$ with an action of the Borel group $G(1)$ at a given point $x \in X$. Hence, as mentioned earlier, there is a trivialization of an action groupoid on $X$ which we will now explore in order to describe the dynamical system on $X$.

According to Einsiedler and Ward \cite{EinsWard2011}, the difficult and interesting aspect of a dynamical system is to understand the orbit of a point. In this case, the two algebraic objects $\mathfrak{m}_x, G(1)$ help us to understand the dynamics associated to the commutative algebra $C(X)$ at each $x \in X$. Through them we also associate the zero sets $Z[\mathfrak{m}_x]$ which form a $z$-ultrafilter containing $X$ to the dynamical system. Thus, $\{Z(f): f \in \mathfrak{m}_x\}$ is a closed cover for $X$. An open cover for $X$ can be constructed from a countable number of $U_f = X - Z(f)$ as we have seen above; such that $(U_f, \phi_f)$ is an open covering for $X$ and each inverse image $\pi^{-1}(U_f)$ is fibrewise homeomorphic to $U \times \mathfrak{m}_x$. These give a system of homeomorphisms $\phi_f : U_f \times \mathfrak{m}_x \to \pi^{-1}(U_f)$ forming the transition functions \[\phi_{\alpha \beta} = \phi_\beta \circ \phi_\alpha^{-1} : (U_\alpha \cap U_\beta) \times \mathfrak{m}_x \to (U_\alpha \cap U_\beta) \times \mathfrak{m}_x.\]

In line with this construction, we also understand the group of automorphisms or transformations $G(1) \subseteq Aut(X,\mu)$ of $X$ as constituting the structure group of the fibre bundle since it defines an action on the fibres $\mathfrak{m}_x$ given as \[Aut(X,\mu) \times \mathfrak{m}_x \to \mathfrak{m}_x, (\phi, f) \mapsto \phi(f) = f \circ \phi^{-1}.\] The action is fibrewise since $Z(f) = Z(\phi(f))$. Thus, the transformations $\phi \in Aut(X,\mu)$ also define the system of homeomorphisms constituting the transition functions. We have the following result.
\begin{thm}
The symmetries of the commutative algebra $C(X)$ give rise to a (Lie) symmetry groupoid.
\end{thm}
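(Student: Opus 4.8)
The plan is to exhibit a concrete groupoid whose arrow space encodes the fibrewise $G(1)$-symmetries already constructed, and then to upgrade its Borel structure to a smooth (Lie) one. First I would take the bundle of maximal ideals $M = \bigsqcup_{x \in X}\mathfrak{m}_x$ with projection $\pi : M \to X$, and form the transformation (action) groupoid $\mathcal{G} = G(1)\ltimes M \rightrightarrows M$ whose arrows are pairs $(g,f)$ with source $s(g,f)=f$ and target $t(g,f)=gf$. The decisive point, recorded just before the statement, is that the action is fibrewise, $Z(f)=Z(gf)=Z(\phi(f))$; hence each arrow stays inside a single fibre $\mathfrak{m}_x$, so $\mathcal{G}$ is fibred over $X$ through $\pi$ and genuinely models the symmetries attached to each point $x$ (equivalently to the maximal ideal $\mathfrak{m}_x$ identified with $x$ by the homeomorphism $\mathfrak{z}$ of Proposition~1.3).

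Next I would verify the groupoid axioms, nearly all of which are in hand. Associativity of composition is the group law in $G(1)$ computed in Proposition~2.3, namely $T_{g_2 g_1}(U_f)=T_{g_2}(U_{g_1 f})=U_{g_2 g_1 f}$; the unit arrow over each point is $T_1=\mathrm{id}$; and since every $g\in G(1)$ is a unit of $C(X)$, the inverse arrow is $T_{g^{-1}}=T_g^{-1}$. Because the arrows preserve zero-sets and cozero-sets, the multiplication is compatible with $\pi$ and the isotropy at $x$ is exactly the $G(1)$-action on $\mathfrak{m}_x$. At this stage one has a topological, indeed Borel, groupoid: by the separation property of $\mathfrak{B}=\{U_f\}$ in Remark~2.2 and Corollary~2.4, which asserts that $\mathfrak{B}$ generates the Borel structure $\mathcal{B}$, the arrow and unit spaces are standard Borel spaces on which $s,t$, composition and inversion are Borel.

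To obtain the \emph{Lie} refinement I would use the local trivializations $\phi_f : U_f\times\mathfrak{m}_x \to \pi^{-1}(U_f)$ together with the homeomorphism $\tau : X \to [0,1]\simeq\mathbb{T}$ furnished by Theorem~2.6 and Corollary~2.7, which $C(X)$-embeds the closed intervals of $\R$ and endows $X$ with a smooth model on the torus. The structure group reduces, in the paper's identification, to $G(1)\simeq(0,1]$ (equivalently $\R^{*}$), a one-dimensional Lie group, and the transition functions $\phi_{\alpha\beta}=\phi_\beta\circ\phi_\alpha^{-1}$, realized by the scaling $\mu_f\mapsto\mu_{gf}$, are $G(1)$-valued smooth cocycles on the overlaps $U_\alpha\cap U_\beta$. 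This is precisely the datum of a smooth principal $G(1)$-bundle over $X$, and the desired symmetry groupoid is its gauge (Atiyah) groupoid $(P\times_X P)/G(1)\rightrightarrows X$, whose structure maps inherit smoothness from that of the $\phi_{\alpha\beta}$; the action groupoid $\mathcal{G}$ above is then its smooth bundle-of-isotropy presentation.

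The hard part will be the passage from the Borel groupoid to the Lie groupoid, that is, verifying that the $G(1)$-action introduced a priori only as a Borel transformation of the generating family $\mathfrak{F}$ is genuinely smooth for the manifold structure carried by $\tau$, and that its graph $\{((t,x),y)\}$ is a smooth submanifold realizing a continuous transformation $g(t):x\mapsto y$. This rests on the compactness, or at least local compactness, of $X$ emphasized after Corollary~2.10: without it $\tau$ need not be a homeomorphism onto $[0,1]$ and the transition functions can fail to be smooth. The lattice/pseudocompactness criteria of Corollaries~2.9 and~2.10 and Remark~2.11 are exactly what exclude a $C(X)$-embedded copy of $\mathbb{N}$ on which $\tau\to\infty$, thereby forcing $\tau$ to be bounded, the $\phi_{\alpha\beta}$ to be smooth, and the Borel structure to coincide with that of a genuine Lie groupoid.
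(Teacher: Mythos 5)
Your construction lands in essentially the right place, but by a genuinely different route than the paper. The paper's proof is a direct application of Mackenzie's frame-groupoid construction: it takes the bundle $\mathcal{Z} = \{\mathfrak{m}_x\}_{x\in X}$ with projection $p : \mathcal{Z} \to X$ and declares $\mathcal{G}(\mathcal{Z})$ to be the set of \emph{all} vector-space isomorphisms $\xi : \mathfrak{m}_x \to \mathfrak{m}_y$ between fibres over possibly different points, with $s(\xi)=x$, $t(\xi)=y$, composition as partial multiplication, and isotropy the general linear groups $G(\mathfrak{m}_x)$. The essential content there --- the ``smoothly varying symmetries'' --- is carried by arrows that move \emph{between} fibres; the fibrewise $G(1)$-action is only the isotropy part. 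Your first groupoid, the action groupoid $G(1)\ltimes M \rightrightarrows M$ over the total space, has every arrow confined to a single fibre (as you yourself note via $Z(f)=Z(gf)$), so by itself it is not the paper's symmetry groupoid: it records the ``particular symmetry'' of each $\mathfrak{m}_x$ but none of the symmetry connecting $\mathfrak{m}_x$ to $\mathfrak{m}_y$. You repair this by passing to the gauge (Atiyah) groupoid $(P\times_X P)/G(1) \rightrightarrows X$ of the principal bundle assembled from the transition cocycles $\phi_{\alpha\beta}$, and since the gauge groupoid of a frame bundle is isomorphic to the frame groupoid (Mackenzie again), your end product agrees with the paper's $\mathcal{G}(\mathcal{Z})$ under the paper's own identification $G(1)=G(\mathfrak{m}_x)$. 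What your route buys is an honest flag on the point the paper passes over in silence: that the transition functions must be smooth for the Borel/topological data to constitute a Lie groupoid; the paper simply asserts smoothness by appeal to Mackenzie, whereas you isolate it as the hard step and tie it to compactness of $X$.

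One sentence should be deleted or corrected: the claim that the action groupoid $G(1)\ltimes M$ ``is the smooth bundle-of-isotropy presentation'' of the gauge groupoid. It is not. The isotropy bundle of $(P\times_X P)/G(1)$ is the adjoint group bundle over $X$, with fibres isomorphic to $G(1)$, whereas $G(1)\ltimes M$ is a groupoid over $M$ whose isotropy at $f\in\mathfrak{m}_x$ is the stabiliser of $f$ in $G(1)$ --- generically trivial, since the action scales functions. These are different objects with different unit spaces. The misidentification does not sink your argument, because the gauge-groupoid construction delivers the theorem on its own; but as written the sentence asserts an isomorphism that fails.
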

\begin{proof}
The above construction expresses a bundle of symmetries represented with an indexed families $\mathcal{Z} = \{\mathfrak{m}_x\}_{x\in X}$ of geometric structures/points or geometries constituting a bundle $\mathcal{Z}$ over $X$, with projection $p : \mathcal{Z} \to X$, such that $\mathfrak{m}_x = p^{-1}(x)$. The 'symmetry' of these geometric (closed) points of the commutative algebra $C(X)$ is expressed by the groupoid $\mathcal{G}(\mathcal{Z})$. Hence, with $(\mathcal{Z}, p, X)$ a vector bundle, and $\mathcal{G}(\mathcal{Z})$ the set of all vector space isomorphims $\xi : \mathfrak{m}_x \to \mathfrak{m}_y$ for $x,y \in X$. The Borel group $G(1) = G(\mathfrak{m}_x)$ of automorphisms of $\mathfrak{m}_x$ expresses the particular 'symmetry' of $\mathfrak{m}_x$; and the groupoid $\mathcal{G}(\mathcal{Z})$ expresses the smoothly 'varying symmetries' of the bundle.

Thus, in line with Mackenzie \cite{Mackenzie2005}, the smooth bundle symmetry $\mathcal{G}(\mathcal{Z})$ is a Lie groupoid on $X$ with respect to the following structure. For $\xi : G(\mathfrak{m}_x) \to G(\mathfrak{m}_y), s(\xi) = x, t(\xi) = y$; the objection map is $x \mapsto 1_x = Id_{\mathfrak{m}_x}$, the partial multiplication is the composition of maps; the inverse of $\xi \in G(\mathcal{Z})$ is its inverse as an isomorphism. The isotropy groups are the general linear groups $G(\mathfrak{m}_x)$ of the fibres which are all isomorphic.
\end{proof}
\begin{rem}
Given the Lie groupoid $\mathcal{G}(\mathcal{Z}) \rightrightarrows X$. We want to model the Lie groupoid on the generalized space $\mathcal{M}(X)$, which is the representation space of the algebra $C(X)$. We will achieve this by the formulation of the action of the Lie groupoid $\mathcal{G}(\mathcal{Z}) \rightrightarrows X$ on the space of the generalized points $\mathcal{X} = \{\delta_x : x \in X\}$ homeomorphic to $\mathcal{Z}$. This presents the generalized space $\mathcal{M}(X)$ as a measure groupoid giving a generalized measure-theoretic approach to the dynamical system defined by the action of the commutative algebra/lattice $C(X)$ on $X$. This follows from Peter Hahn \cite{Hahn78}.
\end{rem}
From Mackey's definition of generalized subset, we have seen a correspondence between closed subsets of $X$ and Borel (Radon) measures in $\mathcal{M}(X)$; such that the points of $X$ coincide with the Dirac measures $\delta_x \in \mathcal{M}(X)$ (which are also (invariant) ergodic measures in \cite{EinsWard2011}.) The Dirac measures define the \emph{point functionals}.
\begin{defx}\cite{Semadeni65}
A functional $I_x$ on $C(X)$ is called a point functional if $x$ is a point in $X$ such that $I_x(f) = f(x)$ for all $f \in C(X)$.
\end{defx}
Note that $\delta_x(X) = 1, \; \forall \; x \in X$, which makes $\delta_x$ a probability measure. Hence, $\delta_x(f) = f(x)$ for any Borel $A \subset X$ and $f \in C(X)$. We given the action of the Lie groupoid on the set of generalized points $\mathcal{X} = \{\delta_x : x \in X\}$ as a result.
\begin{prop}
Given the Lie groupoid $\mathcal{G}(\mathcal{Z}) \rightrightarrows X$, the set of generalized points $\mathcal{X} =  \{\delta_x : x \in X\}$ is a $\mathcal{G}(\mathcal{Z})$-space.
\end{prop}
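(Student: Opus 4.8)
The plan is to exhibit the two pieces of data that constitute a groupoid action and to check the three compatibility axioms. Recall that $\mathcal{X}$ is a $\mathcal{G}(\mathcal{Z})$-space once it is equipped with an \emph{anchor} (moment) map $\pi : \mathcal{X} \to X$ into the unit space of $\mathcal{G}(\mathcal{Z})$ together with an action map on the fibred product $\mathcal{G}(\mathcal{Z}) \ast \mathcal{X} = \{(\xi,\delta_x) : s(\xi) = \pi(\delta_x)\}$, satisfying the anchor law $\pi(\xi\cdot\delta_x) = t(\xi)$, the unit law $1_{\pi(\delta_x)}\cdot\delta_x = \delta_x$, and the associativity law $(\xi_2\xi_1)\cdot\delta_x = \xi_2\cdot(\xi_1\cdot\delta_x)$ whenever $\xi_2\xi_1$ is defined and $s(\xi_1)=\pi(\delta_x)$.

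First I would take $\pi : \mathcal{X}\to X$ to be the map $\delta_x \mapsto x$. This is precisely the correspondence between point functionals and points recorded above ($I_x(f)=f(x)$, $\delta_x(f)=f(x)$), and it identifies $\mathcal{X}$ with the unit space $X$ of $\mathcal{G}(\mathcal{Z})$; by the coincidence of the $C(X)$-topology with the induced Zariski topology it is a homeomorphism. I would then define the action by $\xi\cdot\delta_x := \delta_{t(\xi)}$ for each $\xi : \mathfrak{m}_x\to\mathfrak{m}_y$ with $s(\xi)=x=\pi(\delta_x)$. Well-definedness is immediate, since the target map of the Lie groupoid $\mathcal{G}(\mathcal{Z})$ assigns to $\xi$ the unique $y\in X$ with $\xi(\mathfrak{m}_x)=\mathfrak{m}_y$, so $\delta_{t(\xi)}=\delta_y$ depends on $\xi$ alone.

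The three axioms then reduce to the groupoid structure already established. Anchor compatibility is $\pi(\xi\cdot\delta_x)=\pi(\delta_y)=y=t(\xi)$; the unit law holds because $1_x=Id_{\mathfrak{m}_x}$ has $t(1_x)=x$, so $1_x\cdot\delta_x=\delta_x$; and for associativity, if $\xi_1:\mathfrak{m}_x\to\mathfrak{m}_y$ and $\xi_2:\mathfrak{m}_y\to\mathfrak{m}_z$, then the partial multiplication (composition of isomorphisms) gives $\xi_2\xi_1:\mathfrak{m}_x\to\mathfrak{m}_z$, whence $(\xi_2\xi_1)\cdot\delta_x=\delta_z=\xi_2\cdot\delta_y=\xi_2\cdot(\xi_1\cdot\delta_x)$. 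Finally I would record that this action is the groupoid refinement of the maps $\varphi_*$: an automorphism $\phi\in Aut(X,\mu)$ induces the isomorphism $f\mapsto f\circ\phi^{-1}$ of $\mathfrak{m}_x$ onto $\mathfrak{m}_{\phi(x)}$, an element $\xi\in\mathcal{G}(\mathcal{Z})$ with $s(\xi)=x$ and $t(\xi)=\phi(x)$, for which $\xi\cdot\delta_x=\delta_{\phi(x)}=\varphi_*(\delta_x)$, recovering the restriction of $\varphi_*$ to $\mathcal{U}$.

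The algebra above is routine, and the step I expect to be the genuine obstacle is upgrading this set-theoretic action to a topological (Borel, and ultimately measure) groupoid action in the sense of Hahn, so that $\mathcal{X}$ is the representation space promised in the preceding remark rather than a bare $\mathcal{G}(\mathcal{Z})$-set. Concretely I would verify that the action map $(\xi,\delta_x)\mapsto\delta_{t(\xi)}$ is continuous, hence Borel, on the fibred product, which rests on continuity of the target map of $\mathcal{G}(\mathcal{Z})$ together with the homeomorphism $\pi$, and that the point-measure class on $\mathcal{X}\cong X$ is quasi-invariant, so that the action extends affinely and continuously to the generalized space $\mathcal{M}(X)$ in the manner already shown for $\mathcal{U}$. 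One should also reconcile the phrase ``$\mathcal{X}$ homeomorphic to $\mathcal{Z}$'' with the identification $\mathcal{X}\cong X$: the precise statement is that $\mathcal{X}$ is homeomorphic to the base of the bundle, and the asserted action is the tautological action of $\mathcal{G}(\mathcal{Z})$ on its own unit space, transported to $\mathcal{X}$ along $\pi$.
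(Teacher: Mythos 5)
Your proof is correct and follows essentially the same route as the paper: both identify $\mathcal{X}$ with the unit space $X$ via the anchor map $\delta_x \mapsto x$ and let $\mathcal{G}(\mathcal{Z})$ act tautologically through its target map, $\xi\cdot\delta_x = \delta_{t(\xi)}$; your version is in fact more explicit, since the paper never writes this action formula out or checks the unit and associativity axioms. The only substantive difference is that the paper's proof goes on to show the action is free and proper (hence principal) and uses this to construct the quotient groupoid $H = \mathcal{G}(\mathcal{Z})\setminus \mathcal{X}\star\mathcal{X}$, but that material serves the subsequent propositions rather than the statement itself.
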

\begin{proof}
The homeomorphism $\rho : \mathcal{X} \to X$, which is a continuous open map from the space $\mathcal{X}$ onto the unit space $X$, defines a left action of $\mathcal{G}(\mathcal{Z})$ on $\mathcal{X}$, where $\mathcal{G}(\mathcal{Z}) \star \mathcal{X}$ is the set of composable pair $(\xi,\delta_x)$. This means $(\xi,\delta_x) \mapsto \xi x$ with $s(\xi) = \rho(\delta_x)$. In other words, $\mathcal{X}$ is a left $\mathcal{G}(\mathcal{Z})$-space if $\rho : \mathcal{X} \ni \delta_x \mapsto s(\xi) \in X$. The action defines an equivalence relation on $\mathcal{X}$. Given any pair $\delta_x,\delta_y \in \mathcal{X}$, we say that $\delta_x \sim \delta_y$ if $\rho(\delta_x) = \rho(\delta_y)$ which implies $s(\xi) = \rho(\delta_x) =  \rho(\delta_y)$. Since $\xi$ in $\mathcal{G}(\mathcal{Z})$ are isomorphisms,  $(\xi,\delta_x), (\xi,\delta_y)$ are composable pairs in $\mathcal{G}(\mathcal{Z}) \star \mathcal{X}$. (Cf. \cite{MuhRenWil87})

This action of the Lie groupoid $\mathcal{G}(\mathcal{Z})$ on $\mathcal{X}$ is free and proper. Because $\xi \cdot \delta_x = \delta_x$ implies that $\xi$ is a unit. It is \emph{proper} also because the map $\mathcal{G}(\mathcal{Z}) \star \mathcal{X} \to \mathcal{X} \times \mathcal{X}$ given by $(\xi,\delta_x) \mapsto (\xi \cdot \delta_x, \delta_x)$ is a proper map; that is, the inverse image of a compact set is compact. The two make $\mathcal{X}$ a principal $\mathcal{G}(\mathcal{Z})$-space. Hence, the natural projection $\pi : \mathcal{X} \to \mathcal{G}(\mathcal{Z}) \setminus \mathcal{X}$ onto the locally compact and Hausdorff orbit space $\mathcal{G}(\mathcal{Z}) \setminus \mathcal{X}$ is an open map.

Given that $\mathcal{X}$ is a left principal $\mathcal{G}(\mathcal{Z})$-space, then $\mathcal{X} \star \mathcal{X} = \{(\delta_x,\delta_y) \in \mathcal{X} \times \mathcal{X} : \rho(\delta_x) = \rho(\delta_y) \} \subset \mathcal{X} \times \mathcal{X}$ is the equivalence relation defined by the open map $\rho$ (or $\mathcal{G}(\mathcal{Z})$-action) on $\mathcal{X}$. The equivalence classes are defined by having the same image in $X = \mathcal{G}(\mathcal{Z})^o$. Since $\mathcal{G}(\mathcal{Z})$ acts by composition on $\mathcal{X}$, we have $\xi \cdot \delta_x \; \implies \;  s(\xi) = \rho(\delta_x)$; that is, $\xi \circ \rho$ is defined on $\mathcal{X}$. Thus $\mathcal{X} \star \mathcal{X} \subset \mathcal{X} \times \mathcal{X}$ is a space of equivalence classes or pairs in $\mathcal{X}$ on which a diagonal action of $\mathcal{G}(\mathcal{Z})$ is defined as follows:
\[\mathcal{G}(\mathcal{Z}) \star (\mathcal{X} \star \mathcal{X}) \to \mathcal{X} \star \mathcal{X}, \; \xi \cdot (\delta_x,\delta_y) \mapsto (\xi \cdot \delta_x, \xi \cdot \delta_y). \] Let $H = \mathcal{G}(\mathcal{Z})\setminus \mathcal{X} \star \mathcal{X}$ be the orbit space of the diagonal action. Then $H$ has a natural groupoid structure with multiplication defined as $[\delta_x,\delta_y] \cdot [\delta_y,\delta_z] = [\delta_x,\delta_z]$ with $\mathcal{G}(\mathcal{Z})\setminus \mathcal{X}$ as the unit space. Thus, $\mathcal{G}(\mathcal{Z})\setminus \mathcal{X} \star \mathcal{X} \rightrightarrows \mathcal{G}(\mathcal{Z})\setminus \mathcal{X} = H \rightrightarrows H^o$ is a groupoid, where $t([\delta_x,\delta_y]) = [\delta_x]$ and $s([\delta_x,\delta_y]) = [\delta_y]$, for $[\delta_x], [\delta_y] \in \mathcal{G}(\mathcal{Z}) \setminus \mathcal{X}$.
\end{proof}
\begin{prop}
The groupoid of equivalence defined by $\mathcal{G}(\mathcal{Z})$-action on $\mathcal{X}$ denoted $G(\mathcal{Z})\setminus \mathcal{X} \star \mathcal{X} \rightrightarrows G(\mathcal{Z})\setminus \mathcal{X} = H \rightrightarrows H^o$ defines a right action on the compact space $\mathcal{X}$.
\end{prop}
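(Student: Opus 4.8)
The plan is to realize $H$ as a groupoid acting on the right of $\mathcal{X}$ in the sense of Muhly--Renault--Williams \cite{MuhRenWil87}, thereby completing the picture in which $\mathcal{X}$ is a $(\mathcal{G}(\mathcal{Z}), H)$-equivalence. First I would take as the momentum (anchor) map for the right action the quotient projection $\sigma = \pi : \mathcal{X} \to \mathcal{G}(\mathcal{Z})\setminus \mathcal{X} = H^o$, $\delta_x \mapsto [\delta_x]$, which is continuous and open by the preceding proposition. The composable pairs are then $\mathcal{X} \star H = \{(\delta_x, h) : \sigma(\delta_x) = t(h)\}$, recalling $t([\delta_y,\delta_z]) = [\delta_y]$ and $s([\delta_y,\delta_z]) = [\delta_z]$.

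To define $\delta_x \cdot h$ for a composable pair with $h = [\delta_y,\delta_z]$, I would invoke the freeness of the left $\mathcal{G}(\mathcal{Z})$-action established above: since $[\delta_x] = t(h) = [\delta_y]$, the points $\delta_x$ and $\delta_y$ lie in the same $\mathcal{G}(\mathcal{Z})$-orbit, so there is a \emph{unique} isomorphism $\xi \in \mathcal{G}(\mathcal{Z})$ with $\xi \cdot \delta_y = \delta_x$, the uniqueness being precisely freeness. I would then set
\[ \delta_x \cdot [\delta_y,\delta_z] := \xi \cdot \delta_z, \]
and observe that $\sigma(\delta_x \cdot h) = [\delta_z] = s(h)$, so the action lowers the momentum as required of a right groupoid action.

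The verification then splits into three checks. (i) \textbf{Well-definedness:} replacing the representative $(\delta_y,\delta_z)$ of $h$ by the diagonally equivalent $(\eta\cdot\delta_y, \eta\cdot\delta_z)$ forces $\xi$ to be replaced by $\xi\eta^{-1}$, leaving $\xi\cdot\delta_z$ unchanged; this is exactly where freeness is indispensable. (ii) \textbf{Associativity:} for composable $h_1 = [\delta_a,\delta_b]$ and $h_2 = [\delta_b,\delta_c]$ with product $[\delta_a,\delta_c]$, letting $\xi$ be the unique element carrying $\delta_a$ to $\delta_x$, one computes $(\delta_x\cdot h_1)\cdot h_2 = \xi\cdot\delta_c = \delta_x\cdot(h_1 h_2)$. (iii) \textbf{Units:} $\delta_x\cdot[\delta_x,\delta_x] = \delta_x$, since the element fixing $\delta_x$ must be a unit. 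Continuity of the action map follows because $\xi$ depends continuously on $(\delta_x,\delta_y)$: it is the value of the division map $\mathcal{X}\star\mathcal{X}\to\mathcal{G}(\mathcal{Z})$, which exists and is continuous precisely because the left action is principal, composed with the continuous $\mathcal{G}(\mathcal{Z})$-action on $\mathcal{X}$.

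Finally, since $\rho : \mathcal{X}\to X$ is a homeomorphism and $X$ is compact, $\mathcal{X}$ is compact, and the right $H$-action inherits properness from the compactness of $\mathcal{X}$ together with the properness of the diagonal action used to form $H$. The main obstacle I anticipate is confined to step (i): the whole construction rests on the uniqueness of $\xi$, and hence on the freeness of the left $\mathcal{G}(\mathcal{Z})$-action, so I would be careful to invoke that property cleanly at the outset before defining the action, since without it the map $\delta_x\cdot h$ would fail even to be single-valued.
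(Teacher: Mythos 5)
Your proposal is correct and takes essentially the same route as the paper: the same anchor map $\sigma(\delta_x)=[\delta_x]$, the same composability condition, the same definition $\delta_x\cdot[\delta_y,\delta_z]=\xi\cdot\delta_z$ via the unique $\xi\in\mathcal{G}(\mathcal{Z})$ supplied by freeness of the left action, and the same well-definedness check under change of diagonal representative. Your verification is in fact more complete than the paper's, which stops at well-definedness and omits the associativity, unit, continuity, and properness checks you carry out.
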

\begin{proof}
Given the derived groupoid $H \rightrightarrows H^o$, where $\sigma : \mathcal{X} \to H^o$ is a continuous open map from the (locally) compact space $\mathcal{X}$ onto the unit space $H^o = G(\mathcal{Z})\setminus \mathcal{X}$, given as $\delta_x \mapsto [\delta_x] \; \implies \; \sigma(\delta_x) = t([\delta_x,\delta_y]) = t(\xi(\delta_x), \xi(\delta_y)$). Thus, the quotient groupoid $H$ defines a right action on $\mathcal{X}$. We therefore have: \[\mathcal{X} \star H = \{(\delta_z, h) = (\delta_z, [\delta_x,\delta_y]) \in \mathcal{X} \times H : \sigma(\delta_z) = [\delta_z] = [\delta_x] = t([\delta_x,\delta_y]) \}.\] Thus, the action is given by composition $ \delta_z \cdot [\delta_x,\delta_y] = \xi(\delta_y)$, where $g$ is unique in $G(\mathcal{Z})$ and satisfies $\delta_z = g\delta_x$ (because of composability criterion).

The action is well defined for given $[\delta_{x'}, \delta_{y'}] = [\delta_x, \delta_y]$, then there exists a unique $h \in H$ such that $\delta_xh = \delta_{x'}$ and $\delta_yh = \delta_{y'}$. Hence, by definition $[\delta_{x'}] = [\delta_z] \; \implies \; [\delta_x] = [\delta_z]$, and if the three are same orbit then there must be a unique element of $\mathcal{G}(\mathcal{Z})$ such that $\delta_{x'} \mapsto \delta_z$. This is given by $\delta_{x'} \overset{h^{-1}}\mapsto \delta_x \overset{\xi}\mapsto \delta_z$. It therefore follows that \[\delta_z\cdot
[\delta_{x'}, \delta_{y'}] = (\xi h^{-1})\delta_{y'} = \xi(\delta_y) = \delta_z\cdot [\delta_x,\delta_y].\]
\end{proof}
The following diagram illustrates the commutativity of left $\mathcal{G}(\mathcal{Z})$-action and right $H$-action on $\mathcal{X}$.
\begin{center}
\begin{tikzpicture}
\draw [-][black] (0,-1.8,0) node[below] {$\mathcal{G}(\mathcal{Z})$};
\draw [-][black] (3.3,-1.8,0) node[below] {$\mathcal{X}$};
\draw [-][black] (1.5,-2.1,0) node[above] {$*$};
\draw [->][black] (0.3,-2.2,0) -- (2.7,-2.2,0) ;

\draw [->][black] (3,-2.4,0) --  (0.3,-3.8,0);
\draw [->][black] (1.5,-3.1,0) node[above] {$\rho$};
\draw [-][black] (1.5,-4.1,0) node[above] {$\bar{\rho}$};
\draw [->][black] (2.7,-4.2,0) -- (0.3,-4.2,0) ;
\draw [->][black] (0,-2.4,0) --  (0,-3.8,0);
\draw [->][black] (3.3,-2.4,0) -- (3.3,-3.8,0) ;
\draw [-][black] (3.3,-2.9,0) node[right] {$\sigma$};
\draw [-][black] (-0.2,-2.9,0) node[left] {$s$};
\draw [-][black] (0,-3.8,0) node[below] {$X$};
\draw [-][black] (3.3,-3.8,0) node[below] {$\mathcal{G}(\mathcal{Z})\setminus \mathcal{X}$};

\draw [-][black] (5.5,-4.1,0) node[above] {$t$};
\draw [->][black] (6.5,-4.2,0) -- (4,-4.2,0) ;
\draw [-][black] (5.5,-2.1,0) node[above] {$*$};
\draw [->][black] (4,-2.2,0) -- (6.5,-2.2,0) ;
\draw [-][black] (6.9,-1.8,0) node[below] {$\mathcal{X} \star \mathcal{X}$};
\draw [-][black] (6.9,-3.8,0) node[below] {$H$};
\draw [->][black] (6.7,-2.4,0) -- (6.7,-3.8,0) ;
\draw [-][black] (6.7,-2.9,0) node[right] {$\pi_2$};
\end{tikzpicture}
\end{center}
\begin{rem}
This action makes $\mathcal{X}$ a right principal $H$-space, and the left action $\rho$ and the right action $\sigma$ commute $\sigma \circ \rho = \rho \circ \sigma$. The action $\rho$ induces a homeomorphism of $ \bar{\rho} : \mathcal{X}/H \to X = \mathcal{G}(\mathcal{Z})^o$ given as $\bar{\rho}(\delta_z\cdot [\delta_x,\delta_y]) = \rho(\delta_y) = s(\xi)$. The groupoid equivalence of the set of generalized points is now given as a result.
\end{rem}
\begin{prop}
The space of generalized or geometric points $\mathcal{X}$ is a $(\mathcal{G}(\mathcal{Z}),H)$-equivalence.
\end{prop}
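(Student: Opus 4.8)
The plan is to verify directly the conditions in the Muhly--Renault--Williams definition of a groupoid equivalence \cite{MuhRenWil87}. Recall that a locally compact Hausdorff space $\mathcal{X}$ is a $(\mathcal{G}(\mathcal{Z}),H)$-equivalence when (i) $\mathcal{X}$ carries a left principal $\mathcal{G}(\mathcal{Z})$-action with open moment map $\rho : \mathcal{X} \to \mathcal{G}(\mathcal{Z})^o = X$; (ii) $\mathcal{X}$ carries a right principal $H$-action with open moment map $\sigma : \mathcal{X} \to H^o = \mathcal{G}(\mathcal{Z})\setminus\mathcal{X}$; (iii) the two actions commute; (iv) $\rho$ descends to a homeomorphism $\bar{\rho} : \mathcal{X}/H \to X$; and (v) $\sigma$ descends to a homeomorphism $\mathcal{G}(\mathcal{Z})\setminus\mathcal{X} \to H^o$. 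Almost every ingredient has already been isolated in the preceding results, so the proof amounts to assembling them and checking that the induced orbit-space maps are genuine homeomorphisms.

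First I would invoke the Proposition establishing $\mathcal{X}$ as a $\mathcal{G}(\mathcal{Z})$-space, where the left action is shown to be free (since $\xi\cdot\delta_x = \delta_x$ forces $\xi$ to be a unit) and proper (the map $(\xi,\delta_x)\mapsto(\xi\cdot\delta_x,\delta_x)$ has compact preimages of compacta); this yields condition (i) with $\rho$ the open moment map. Dually, the Proposition on the right $H$-action supplies (ii): the composition $\delta_z\cdot[\delta_x,\delta_y]=\xi(\delta_y)$, shown there to be well-defined, exhibits $\mathcal{X}$ as a right principal $H$-space, with $\sigma : \delta_x\mapsto[\delta_x]$ the orbit map onto $H^o=\mathcal{G}(\mathcal{Z})\setminus\mathcal{X}$. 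Condition (iii) is the commutation of the left $\mathcal{G}(\mathcal{Z})$-action and the right $H$-action recorded in the Remark, reflecting that the left composition $\xi\cdot\delta_x$ and the right relabelling by classes $[\delta_x,\delta_y]$ act on independent factors of the pair $(\delta_x,\delta_y)$.

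It then remains to verify (iv) and (v). For (iv), the map $\bar{\rho}(\delta_z\cdot[\delta_x,\delta_y]) = \rho(\delta_y) = s(\xi)$ from the Remark is well-defined on $H$-orbits precisely because $H$ permutes points lying over the same element of $X$; I would check that $\bar{\rho}$ is a continuous bijection and that, since $\rho$ and the quotient map $\mathcal{X}\to\mathcal{X}/H$ are open, its inverse is continuous, giving $\mathcal{X}/H \cong X$. Condition (v) is essentially tautological, since by construction $H^o=\mathcal{G}(\mathcal{Z})\setminus\mathcal{X}$ is the orbit space of the left action and $\sigma$ is the corresponding orbit map, so $\sigma$ induces the identity homeomorphism $\mathcal{G}(\mathcal{Z})\setminus\mathcal{X}\to H^o$. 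The main obstacle I anticipate is not the algebra but the point-set topology underlying (iv) and (v): one must confirm that $\rho$ and $\sigma$ are genuinely open and that both actions are proper in the locally compact Hausdorff sense, for only then do the orbit-space bijections upgrade to homeomorphisms. These properties hinge on the compactness of $X$ (equivalently on the lattice/pseudocompactness hypotheses established earlier), which secures local compactness and Hausdorffness of all quotients; I would therefore make the properness of both actions the technical crux before invoking \cite{MuhRenWil87} to conclude that $\mathcal{X}$ is a $(\mathcal{G}(\mathcal{Z}),H)$-equivalence.
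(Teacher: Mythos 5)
Your proposal matches the paper's own proof: both verify the Muhly--Renault--Williams conditions (i)--(v) by assembling the two preceding propositions (left principal $\mathcal{G}(\mathcal{Z})$-action, right principal $H$-action) and the remark on commutation, with the orbit-space identifications $\mathcal{X}/H \to X$ and $\mathcal{G}(\mathcal{Z})\setminus\mathcal{X} \to H^o$ closing the argument. If anything, you are more careful than the paper, which only asserts that $\rho$ and $\sigma$ induce \emph{bijections}, whereas you correctly flag that openness of the moment maps and properness of the actions are needed to upgrade these to homeomorphisms.
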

\begin{proof}
The proof follows from the above. Since, as we have seen $\mathcal{G}(\mathcal{Z})$ and $H$ are locally compact groupoids, and $\mathcal{X}$ is a (locally) compact space that is (i) a left principal $\mathcal{G}(\mathcal{Z})$-space, (ii) a right principal $H$-space; and (iii) the two actions commute; (iv) the map $\rho : \mathcal{G}(\mathcal{Z}) \star \mathcal{X} \to \mathcal{X}$ induces a bijection of $\mathcal{X}/H$ onto $X$, and (v) the map $\sigma : \mathcal{X} \star H \to \mathcal{X}$ induces a bijection of $\mathcal{G}(\mathcal{Z})\setminus \mathcal{X}$ onto $H^o$.
\end{proof}
\begin{rem}
Because $\mathcal{X}$ is a $(\mathcal{G}(\mathcal{Z}),H)$-equivalence, then $H$ is naturally isomorphic to $\mathcal{G}(\mathcal{Z})\setminus \mathcal{X} \star \mathcal{X}$ and $\mathcal{G}(\mathcal{Z})$ is naturally isomorphic to $\mathcal{X} \star \mathcal{X}/H$. Therefore, if we have $[\delta_x,\delta_y] \in \mathcal{G}(\mathcal{Z})\setminus \mathcal{X} \star \mathcal{X}$, where $\rho(\delta_x) = \rho(\delta_y)$, then there exists a unique $h \in H$ such that $\delta_xh = \delta_y$; the correspondence $[\delta_x,\delta_y] \mapsto h$ is the desired isomorphism between $\mathcal{G}(\mathcal{Z})\setminus \mathcal{X} \star \mathcal{X}$ and $H$.
\end{rem}

An alternative formulation of the above principal groupoid action which is tailored towards pseudogroup is given by Ramsay \cite{Ramsay71}. Another formulation of the above is as the gauge groupoid of a principal $G$-bundle constructed by Mackenzie \cite{Mackenzie2005}. We have shown in \cite{OkekeEgwe2018} that every action of $\mathcal{G}$ on the arrows induces an action on the space of objects. Subsequently, the partial multiplication defines a self-action of the arrows which is reflected on the objects and transferred to $Inj(X)$ of a set $X$ by homomorphisms. The $Inj(X)$ is a pseudogroup which is connected to a net of Radon measures converging to a ergodic measure. For Mackey and Ramsay, the background motivation is to extend the correspondence (similarity) between the homomorphism between groups and their subgroups to ergodic or measure groupoid and the invariant measure classes.

\subsection{Haar System of Measures}
Just as groups act transitively through their closed subgroups, ergodic or measured groupoids act ergodically (or metric transitively) through the closed or invariant measure classes. The class $[\mu]$ of a measure $\mu$ is the set of all equivalent measures to $\mu$ that have the same null set. Every measure class contains a probability since any measure can be normalized on its support. (see \cite{Hahn78}). We apply these to the complete measure algebra $(X,\mathcal{B})$ and the subalgebra determined by the maximal ideal $\mathfrak{m}_x$, that is $(X-Z[\mathfrak{m}_x])$ as defined above. We use Theorem 2.1 in \cite{Hahn78} and the associated definitions to understand the nature of measure classes.

The map $q : X-Z[\mathfrak{m}_x] \to \mathfrak{m}_x$ defined by $U_f \mapsto f$ is a Borel surjection for it corresponds to the partitioning of $X$ by $\mathfrak{m}_x$. Thus, given two equivalent probability measures $\mu \sim \nu$ on $X$. First, their R-N derivative is a positive Borel function $F = \frac{d\mu}{d\nu}$; Second, they give rise to the measures $\bar{\mu} = q_*\mu$ and $\bar{\nu} = q_*\nu$ which are probability measures on $\mathfrak{m}_x \subset C(X)$. Since $\mathcal{Z} = \{\mathfrak{m}_x : x \in X\}$ is a bundle space over $X$, they are defined on the fibres of the groupoid $\mathcal{G}(\mathcal{Z})$, and therefore can be used for the Haar system of measures; Third, the  $\mathfrak{m}_x$-decomposition of $\nu$ is given by the map $(f,\nu) \mapsto \nu_f : \mathfrak{m}_x \times \mathcal{P}(X) \to \mathcal{C}$, where $\mathcal{P}(X)$ is the set of probability measures on $X$, and $\mathcal{C}$ is collection of measure classes satisfying the following: \\
(1) If $h \geq 0$ is  Borel on $U_f$, then $\displaystyle f \mapsto \int hd\nu_f$ is a real-valued Borel function extending it to $X$. \\
(2) $\nu_f(Z(f)) = 0, \; \forall \; f \in \mathfrak{m}_x$ since $Z(f)$ is $\nu_f$-null set.\\
(3) If $h \geq 0$ is Borel on $X$, then $\displaystyle \int hd\nu = \int \left(\int hd\nu_f\right)d\bar{\mu}(f)$.
\begin{rem}
We make the following remarks from the above. \\ (1) Since the decomposition map $(f,\nu) \mapsto \nu_f$ is determined by the zero-sets $Z[\mathfrak{m}_x]$, for any $A \in \mathfrak{F}_x$-the subalgebra, the measure $\displaystyle \bar{\mu}(A) = \int \int \chi_A \circ qd\mu_fd\bar{\mu}(f)$ is defined on the Lie groupoid $\mathcal{G}(\mathcal{Z})$. \\  (2) Almost all $\mu_f$ are probability measures and the R-N property is invariant of the decomposition; thus, $F = \frac{d\mu}{d\nu} = \frac{d\nu_f}{d\mu_f}$ is a Borel function a.e. \\
(3) The equation $\displaystyle \mu = \int \mu_fd\bar{\mu}(f)$ follows on the $\mathfrak{m}_x$-decomposition of $\mu$ (or $q$-decomposition of the probability measure $\mu$ with respect to $\bar{\mu}$ on $\mathfrak{m}_x$). \\ (4) By the equivalence $\mu \sim \nu$, the measure classes $[\nu_f]$, $[\bar{\mu}]$ are determined by the measure class $[\nu]$ or $[\mu]$ up to the zero sets $Z[\mathfrak{m}_x]$.
\end{rem}
The principal Lie groupoid $\mathcal{G}(\mathcal{Z})$ is analytic given that its Borel structure is analytic and the space $(X,\mathcal{B}(X))$ is countably separated. Given a probability measure $\nu$ on the $t$-fibre $t^{-1}(x) = \mathcal{G}(\mathcal{Z})(x,-), x \in X$, an arrow $\xi \in \mathcal{G}(\mathcal{Z})$ with $s(\xi) = x$, and $B \subset \mathcal{G}(\mathcal{Z})$, the map \[ B \mapsto \int \chi_B(\xi\eta)d\nu(\eta) \] defines a probability $\xi \cdot \nu$ on $\mathcal{G}(\mathcal{Z})(t(\xi),-)$. Since the product $\xi\eta$ is defined for $\nu$-almost all $\eta$, the support of $\nu$ is $\mathcal{G}(\mathcal{Z})(s(\xi),-)$. Thus, $supp(\nu^{s(\xi)}) = \mathcal{G}(\mathcal{Z})(s(\xi),-) \; \implies \; supp(\xi\cdot \nu^x) = \mathcal{G}(\mathcal{Z})(t(\xi),-)$, which gives $\xi\cdot \nu^{s(\xi)} = \nu^{t(\xi)}$.

Given a measure $\nu$ on $\mathcal{G}(\mathcal{Z})$, its inverse $\nu^{-1}$ is defined as $\nu^{-1}(B) = \nu(B^{-1})$ for a Borel subset $B$ of $\mathcal{G}(\mathcal{Z})$. Hence, a symmetric measure is defined as follows.
\begin{defx}\cite{Hahn78}
A measure $\mu$ is symmetric if $\mu^{-1} = \mu$. A measure class containing a symmetric measure is symmetric. In that case a measure class $C$ is symmetric if there is $\mu \in C$ such that $\mu \sim \mu^{-1}$.
\end{defx}
\begin{defx}\cite{Hahn78}
Given a symmetric measure class $C$ on the analytic groupoid $\mathcal{G}(\mathcal{Z})$. Let $\mu \in C$ be a probability measure with $t$-decomposition; that is, $\displaystyle \mu = \int \mu^xd\bar{\mu}(x)$ over $X$. Then $\mu$ is called (left) \emph{quasi-invariant} if there is a $\bar{\mu}$-conull Borel set $U_f \subset X$ such that $t(\xi) \in U_f$ and $s(\xi) \in U_f$. Then $\xi\cdot \mu^{s(\xi)} \sim \mu^{t(\xi)}$.
\end{defx}
\begin{defx}(cf. \cite{Hahn78})
A symmetric measure class $C$ possessing a quasi-invariant probability is called \emph{invariant measure class}. With $C$ an invariant measure class, then the pair $(G(\mathcal{Z}), C)$ is a \emph{measure groupoid}.
\end{defx}
\begin{prop}
The principal groupoid $H\rightrightarrows H^o$ defined above is the equivalence relation established on the set of probability measures $\mathcal{P}(X)$ (or on the normalized generalized space $\mathcal{M}_1(X)$) by the action of the Lie groupoid $\mathcal{G}(\mathcal{Z})$.
\end{prop}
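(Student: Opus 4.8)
The plan is to realize $H$ as the orbit equivalence relation of $\mathcal{G}(\mathcal{Z})$ on the probability measures, by first extending the groupoid action from the geometric points $\mathcal{X} = \{\delta_x : x \in X\}$ to all of $\mathcal{P}(X)$ and then showing that the resulting equivalence is already carried by $\mathcal{X} \star \mathcal{X}$ modulo the diagonal $\mathcal{G}(\mathcal{Z})$-action, which is exactly the groupoid $H$. First I would invoke the earlier Proposition that $\mathcal{M}(X)$ is an affine and continuous extension of $\mathcal{U} = \mathcal{X}$: every $\mu \in \mathcal{P}(X) = \mathcal{M}_1(X)$ is the barycentre $\mu = \int_X \delta_x \, d\mu(x)$ of the Dirac points, a representation compatible with the $\mathfrak{m}_x$-decomposition $\mu = \int \mu_f \, d\bar{\mu}(f)$ of the Haar-system subsection. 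The action then extends from $\xi \cdot \delta_x = \delta_{\varphi(x)}$ to $\mathcal{P}(X)$ by the affine rule $\xi \cdot \mu = \varphi_* \mu$, and this is the unique affine, continuous extension agreeing with the given action on the extreme points $\mathcal{X}$.

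Next I would define the relation $R$ on $\mathcal{P}(X)$ by declaring $\mu \sim_R \nu$ whenever $\nu = \xi \cdot \mu$ for some composable $\xi \in \mathcal{G}(\mathcal{Z})$, equipped with the pair-type groupoid structure. The central claim is that $\mu \sim_R \nu$ holds if and only if $\mu$ and $\nu$ lie in the same measure class, i.e.\ share the null sets $Z[\mathfrak{m}_x]$, equivalently the R-N derivative $d\mu/d\nu$ is a positive Borel function; this is precisely the quasi-invariance condition recorded for the measure groupoid $(\mathcal{G}(\mathcal{Z}), C)$. Because the fibre action is fibrewise, $Z(f) = Z(\varphi(f))$, the pushforward keeps $\mu$ inside its measure class, while the decomposition $\mu = \int \mu_f \, d\bar{\mu}(f)$ together with the relation $\xi \cdot \mu^{s(\xi)} \sim \mu^{t(\xi)}$ shows conversely that any two measures in one class differ by a $\mathcal{G}(\mathcal{Z})$-arrow acting on the fibre components. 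Hence $R$ coincides with the measure-class equivalence.

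Finally I would construct the groupoid isomorphism $H \to R$. On extreme points the Remark following the $(\mathcal{G}(\mathcal{Z}),H)$-equivalence already supplies a bijection $[\delta_x, \delta_y] \mapsto h$, where $h \in H$ is the unique arrow with $\delta_x h = \delta_y$; extending by the barycentric representation sends $[\delta_x,\delta_y]$ to the pair $(\mu,\nu)$ of measures supported on the corresponding orbit, and the multiplication $[\delta_x,\delta_y]\cdot[\delta_y,\delta_z] = [\delta_x,\delta_z]$ matches both the transitivity of the measure-class relation and the cocycle composition of R-N derivatives. Matching source and target ($t([\delta_x,\delta_y]) = [\delta_x]$, $s([\delta_x,\delta_y]) = [\delta_y]$), units, and inverses is then routine, and the commuting actions $\sigma \circ \rho = \rho \circ \sigma$ guarantee the map is a Borel, indeed measure-class, isomorphism, so $(H,\bar{C})$ is the measure groupoid of the orbit equivalence on $\mathcal{P}(X)$.

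I expect the main obstacle to be the central claim of the second paragraph, that orbit equivalence under the extended pushforward coincides exactly with measure-class equivalence. The forward direction is immediate from fibrewise invariance of the zero sets, but the converse requires using the $\mathfrak{m}_x$-decomposition and quasi-invariance to manufacture a single composable arrow realizing $\nu = \xi \cdot \mu$ out of the merely $\bar{\mu}$-a.e.\ relation between the fibre measures $\mu_f$ and $\nu_f$; promoting this a.e.\ statement to a globally defined arrow, by controlling the exceptional null sets inside a common $z$-ultrafilter $Z[\mathfrak{m}_x]$, is the delicate point on which the identification of $H$ with the equivalence relation on $\mathcal{M}_1(X)$ ultimately rests.
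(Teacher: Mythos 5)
Your proposal follows essentially the same route as the paper: it identifies $H$ with the orbit equivalence relation of $\mathcal{G}(\mathcal{Z})$ acting on probability measures, matches this equivalence fibrewise (via the $\mathfrak{m}_x$-decomposition) with measure-class equivalence, and extracts the identification with $H$ from the $(\mathcal{G}(\mathcal{Z}),H)$-equivalence of the geometric points $\mathcal{X}$. The delicate converse step you flag at the end—promoting the a.e.\ fibrewise relation $\mu_f \sim \nu_f$ to a single globally defined arrow realizing $\nu = \xi \cdot \mu$—is precisely the point the paper itself leaves as a bare assertion (it simply writes $\mu_f \sim \nu_f \implies \xi \mapsto (\mu_f,\nu_f)$), so your treatment is, if anything, more explicit about what is being assumed.
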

\begin{proof}
Two probabilities $(\mu, \nu) \in \mathcal{P}(X) \times \mathcal{P}(X)$ for which there exists $\xi \in \mathcal{G}(\mathcal{Z}) : t(\xi) = \mu, s(\xi) = \nu$, are said to be equivalent. This is an equivalence relation on the $\mathcal{G}(\mathcal{Z})$-space $\mathcal{X}$ defined by the image $\xi \mapsto (t(\xi), s(\xi))$. Therefore, if $\mu, \nu \in \mathcal{X}/{\mathcal{G}(\mathcal{Z})}$, then $\mu \sim \nu$ if on the fibre $\mu_f \sim \nu_f \; \implies \; \xi \mapsto (\mu_f,\nu_f)$. This is in agreement with our understanding of the action of group of invertible elements $G(1) \subset C(X)$ to be transformations on each measure class in $\mathcal{M}(X)$; thus, isomorphisms on the fibres constituted by invariant measure classes given as follows \[\xi \cdot \mu_f^{s(\xi)} \to \nu_{fg}^{t(\xi)}, \; \text{where } \mu_f^{s(\xi)} \sim \nu_{fg}^{t(\xi)}.\]
Hence, the $(H,\mathcal{G}(\mathcal{Z}))$-equivalence of the generalized points $\mathcal{X}$ is related to the action of the commutative algebra (or lattice) $C(X)$ on the generalized space $\mathcal{M}(X)$ and represented by the measure groupoid $(\mathcal{G}(\mathcal{Z}), C)$.
\end{proof}
There is always a symmetric quasi-invariant probability in a measure class. For according to Hahn \cite{Hahn78}, Theorem 2.1, every probability measure in an invariant measure class $C$ is quasi-invariant. Using his conditions, we strengthened the quasi-invariant condition for a Haar measure by modifying it to agree with the maximal ideal $\mathfrak{m}_x$ or the corresponding zero-sets $Z[\mathfrak{m}_x]$ decomposition of measures on $X$ as follows.
\begin{lem}
Let $(\mathcal{G}(\mathcal{Z}), C)$ be a measure groupoid, $\nu \in C$ a probability with $t$-decomposition $\displaystyle \nu = \int \nu^xd\bar{\nu}(x)$. There is a $\bar{\mu}$-conull Borel set $U_f \subset X$ such that \\ (1) $\nu^x(\mathcal{G}(\mathcal{Z})) = 1$ if $x \in U_f$. \\ (2) $\nu^x(\mathcal{G}(\mathcal{Z}) - \mathcal{G}(\mathcal{Z})|_{U_f}) = 0$ if $x \in U_f$. \\ (3) $x \in U_f \; \implies \; \nu^x(t^{-1}(x)) = 1$. \\ (4) if $\xi \in \mathcal{G}(\mathcal{Z})|_{U_f}$, then $\xi\cdot \nu^{s(\xi)} \sim \nu^{t(\xi)}$.
\end{lem}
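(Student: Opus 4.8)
The plan is to reduce the four clauses to the quasi-invariance already furnished by Hahn's Theorem~2.1 and then to secure each clause by successively shrinking the conull set $U_f$. Since $(\mathcal{G}(\mathcal{Z}), C)$ is a measure groupoid, $C$ is by definition an invariant --- hence symmetric --- measure class possessing a quasi-invariant probability, and by Hahn's Theorem~2.1 every probability in $C$ is quasi-invariant; in particular so is $\nu$. Unwinding the definition of quasi-invariance for $\nu$ yields a conull Borel set on which $t(\xi), s(\xi) \in U_f$ forces $\xi \cdot \nu^{s(\xi)} \sim \nu^{t(\xi)}$, which is precisely clause~(4). Because $\mu \sim \nu$ gives $\bar{\mu} \sim \bar{\nu}$, the notion of conullity here is unambiguous, and I would arrange $U_f = X - Z(f)$ to be a cozero set so that the conull set is aligned with the $\mathfrak{m}_x$-decomposition of the Haar system.

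Clauses (1) and (3) I expect to fall out of the $t$-decomposition $\nu = \int \nu^x \, d\bar{\nu}(x)$. Each fibre measure $\nu^x$ is concentrated on the $t$-fibre $t^{-1}(x)$, which is clause~(3); and, as recorded in the remarks after the Haar-system construction, almost every $\nu^x$ is a probability, so $\nu^x(\mathcal{G}(\mathcal{Z})) = 1$ off a $\bar{\nu}$-null set, giving clause~(1). I would delete these exceptional null sets from $U_f$, which stays conull.

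The substantive step is clause~(2): for $x \in U_f$ the fibre $\nu^x$ should charge only the reduction $\mathcal{G}(\mathcal{Z})|_{U_f}$. Since $\nu^x$ lives on $t^{-1}(x)$ with $x \in U_f$, the target of a $\nu^x$-typical arrow already lies in $U_f$, so it suffices to show $\nu^x(\{\xi : s(\xi) \notin U_f\}) = 0$ for almost every such $x$. Here I would use the symmetry $\nu \sim \nu^{-1}$ together with the inverse map: writing $B = \{\xi : s(\xi) \notin U_f\}$, its inverse image is $B^{-1} = \{\eta : t(\eta) \notin U_f\}$, and integrating the $t$-decomposition gives $\nu(B^{-1}) = \int \nu^x(B^{-1}) \, d\bar{\nu}(x) = \bar{\nu}(X - U_f) = 0$, because each $\nu^x$ sits over the single point $x$. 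Symmetry converts $\nu(B^{-1}) = 0$ into $\nu(B) = 0$, and a final Fubini step forces $\nu^x(B) = 0$ for $\bar{\nu}$-almost every $x$; a last shrinking of $U_f$ then delivers clause~(2) everywhere on $U_f$.

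The main obstacle will be making this symmetry-and-Fubini interchange rigorous on the analytic groupoid: one must check that $B$ and $B^{-1}$ are genuinely Borel --- which follows from the Borel measurability of $s$, $t$ and the inversion map on the analytic $\mathcal{G}(\mathcal{Z})$ --- and that the passage from $\nu$-nullity of $B$ to $\nu^x$-nullity for almost every $x$ is licensed by the decomposition theorem rather than merely asserted. The only other care needed is bookkeeping: ensuring the countably many conull sets extracted across clauses~(1)--(4) intersect to a single conull cozero set $X - Z(f)$, which is immediate from the countable stability of conull sets.
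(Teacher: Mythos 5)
First, a point of reference: the paper itself offers no proof of this lemma. It is stated as a direct adaptation of Hahn's Theorem 2.1 and his inessential-reduction machinery, with the conull set merely renamed as a cozero set $U_f = X - Z(f)$. So your proposal is effectively being measured against Hahn's original argument, and in outline you have reconstructed it correctly: clause (4) is precisely the definition of quasi-invariance, which Hahn's Theorem 2.1 supplies for every probability in the invariant class $C$; clauses (1) and (3) are standard properties of the disintegration $\nu = \int \nu^x d\bar{\nu}(x)$; and clause (2) is the only clause with real content, for which your symmetry-plus-Fubini mechanism (take $B = \{\xi : s(\xi) \notin U_f\}$, note $B^{-1} = \{\eta : t(\eta) \notin U_f\}$, get $\nu(B^{-1}) = 0$ from the $t$-decomposition, and convert via $\nu \sim \nu^{-1}$) is exactly the right one. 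A minor slip: your equality $\int \nu^x(B^{-1})\, d\bar{\nu}(x) = \bar{\nu}(X - U_f)$ should be the inequality $\int \nu^x(B^{-1})\, d\bar{\nu}(x) \leq \bar{\nu}(X - U_f)$, which is all you need.

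There is, however, a genuine gap in your final step for clause (2). You conclude $\nu^x(B) = 0$ for $\bar{\nu}$-almost every $x$ and assert that ``a last shrinking of $U_f$ then delivers clause (2) everywhere on $U_f$.'' It does not, because clause (2) is self-referential: shrinking $U_f$ to a conull $U' \subset U_f$ \emph{enlarges} the set whose nullity is claimed, since $\mathcal{G}(\mathcal{Z}) - \mathcal{G}(\mathcal{Z})|_{U'} \supset \mathcal{G}(\mathcal{Z}) - \mathcal{G}(\mathcal{Z})|_{U_f}$. What your Fubini step yields for $x \in U'$ is $\nu^x(\{\xi : s(\xi) \notin U_f\}) = 0$, whereas clause (2) for $U'$ demands $\nu^x(\{\xi : s(\xi) \notin U'\}) = 0$, and the second does not follow from the first. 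The standard repair, and the one used by Hahn and Ramsay, is an infinite iteration: set $U_0 = U_f$, inductively choose conull Borel sets $U_{n+1} \subset U_n$ with $\nu^x(\{\xi : s(\xi) \notin U_n\}) = 0$ for all $x \in U_{n+1}$, and put $U_\infty = \bigcap_n U_n$, which is still conull. For $x \in U_\infty$ one has $x \in U_{n+1}$ for every $n$, so countable subadditivity gives
\[\nu^x\bigl(\{\xi : s(\xi) \notin U_\infty\}\bigr) = \nu^x\Bigl(\bigcup_n \{\xi : s(\xi) \notin U_n\}\Bigr) = 0,\]
and clause (2) holds on $U_\infty$, while clauses (1), (3), (4) survive the passage to the smaller conull set. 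Your closing remark about ``countable stability of conull sets'' gestures in this direction, but as written the iteration --- which is the actual missing idea --- is absent: a single shrinking, or any finite number of them, never closes the self-reference.
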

Given this modification, we now have that for every co-null Borel set $U_f \subset X$, $(\mathcal{G}(\mathcal{Z})|_{U_f}, C)$ is a measure groupoid called an \emph{inessential reduction (i.r)} of $(\mathcal{G}(\mathcal{Z}),C)$ by Hahn \cite{Hahn78}. The inessential reduction for an open conull subset $U_f \subset X$ is also denoted as $\mathcal{G}(\mathcal{Z})_o$.

From this, we see that each invariant measure class $C$ form a system of Haar measures $\{\mu_f^x\}_{x \in X}$ for each $f \in \mathfrak{m}_x$ on the \emph{inessential reduction (i.r)} $(\mathcal{G}(\mathcal{Z})|_{U_f}, C)$. Since each system is defined on the $t$-fibre $t^{-1}(x) = \mathcal{G}(\mathcal{Z})(x,-)$, it follows that the system of Haar measures is not unique. Hence, any invariant measure class $C$ determines the measure groupoid $(\mathcal{G}(\mathcal{Z}), C)$, with a Haar measure defined as follows.
\begin{defx}
Let $(\mathcal{G}(\mathcal{Z}), C)$ be a measure groupoid. Let $\nu \in C$ and let $\mu \in \bar{C} = (t,s)(C)$ be a probability on the base space. The pair $(\nu,\mu)$ is called a Haar measure for $(\mathcal{G}(\mathcal{Z}), C)$ if $\nu$ has a $t$-decomposition $\displaystyle \nu = \int \nu^xd\mu(x)$ with respect to $\mu$ such that for some inessential reduction $\mathcal{G}(\mathcal{Z})_o$ of $\mathcal{G}(\mathcal{Z})$, for all $\xi \in \mathcal{G}(\mathcal{Z})_o$ and $F \geq 0$ a Borel function on $\mathcal{G}(\mathcal{Z})$ we have \[\int F(\gamma)d\nu^{t(\xi)}(\gamma) = \int F(\xi\gamma)d\nu^{s(\xi)}(\gamma). \eqno{(1)}\]
\end{defx}
Thus, for a Borel function $F$ on the groupoid $\mathcal{G}(\mathcal{Z})$, given a symmetric probability $\nu \in C$ with $t$-decomposition $\displaystyle \nu = \int \nu^xd\bar{\nu}(x)$, where $U_f$ is conull as in the above; the quasi-invariance of $\nu$ implies \[(F \mapsto \int F(\gamma)d\nu^{t(\xi)}(\gamma)) \sim (F \mapsto \int F(\xi\gamma)d\nu^{s(\xi)}(\gamma). \]
With these constructions, Hahn showed that every measure groupoid has a measure $\nu$ satisfying (1) for $\xi$ in an inessential reduction.

Furthermore, the symmetricity of the probability $\nu$ implies $t_*\nu = s_*\nu$. Hence, the quasi-invariance for right translation uses the $s$-decomposition $\displaystyle \nu = \int \nu_xd\bar{\nu}(x)$. The result is that the left and right invariance of $C$ are equivalent. Using these, a $(t,s)$-decomposable measure for the composable space $\mathcal{G}(\mathcal{Z})^{(2)}$ is defined as \[ \nu^{(2)} = \int \nu_x \times \nu^xd\bar{\nu}(x), \; \text{with measure class } [\nu^{(2)}],\] which is usually written $C^{(2)}$ and is dependent on $C$. Since the space of composables of any groupoid has a goupoid structure, the conclusion is that $(\mathcal{G}(\mathcal{Z})^{(2)}, C^{(2)})$ is a measure groupoid. The proofs of the following results follow from \cite{Hahn78}, 3.3; 3.4.
\begin{lem}
If $\mathcal{G}(\mathcal{Z})$ is an analytic (standard) Borel groupoid, $\mathcal{G}(\mathcal{Z})^{(2)} \subset \mathcal{G}(\mathcal{Z}) \times \mathcal{G}(\mathcal{Z})$ is an analytic (standard) Borel groupoid.
\end{lem}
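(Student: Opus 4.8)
The plan is to reduce the statement to three standard facts about descriptive Borel structures: that the class of analytic (respectively standard) Borel spaces is closed under finite products, that it is closed under passage to Borel subsets with the relative structure, and that the composable pairs $\mathcal{G}(\mathcal{Z})^{(2)}$ sit as a \emph{Borel} subset of $\mathcal{G}(\mathcal{Z}) \times \mathcal{G}(\mathcal{Z})$. The groupoid structure on $\mathcal{G}(\mathcal{Z})^{(2)}$ is the one already exhibited on the composables of an arbitrary groupoid, so the remaining content is to check that this structure is carried on an analytic (standard) Borel space and that its structure maps are Borel. First I would give $\mathcal{G}(\mathcal{Z}) \times \mathcal{G}(\mathcal{Z})$ the product Borel structure; since the product of two analytic (standard) subsets of Polish spaces is again analytic (standard) in the product, the hypothesis on $\mathcal{G}(\mathcal{Z})$ makes $\mathcal{G}(\mathcal{Z}) \times \mathcal{G}(\mathcal{Z})$ an analytic (standard) Borel space.

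Next I would realize $\mathcal{G}(\mathcal{Z})^{(2)}$ as the preimage of a diagonal. Consider the map $\Phi : \mathcal{G}(\mathcal{Z}) \times \mathcal{G}(\mathcal{Z}) \to X \times X$ given by $\Phi(\xi,\eta) = (s(\xi), t(\eta))$, which is Borel because $s$ and $t$ are Borel (part of the definition of a Borel groupoid). Then $\mathcal{G}(\mathcal{Z})^{(2)} = \Phi^{-1}(\Delta_X)$, where $\Delta_X = \{(x,x) : x \in X\}$. Because $X = \mathcal{G}(\mathcal{Z})^o$ is countably separated (as already recorded in the excerpt for $(X,\mathcal{B}(X))$), the diagonal $\Delta_X$ is a Borel subset of $X \times X$, so $\mathcal{G}(\mathcal{Z})^{(2)} = \Phi^{-1}(\Delta_X)$ is a Borel subset of $\mathcal{G}(\mathcal{Z}) \times \mathcal{G}(\mathcal{Z})$. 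Since a Borel subset of an analytic (standard) Borel space, with its relative Borel structure, is again analytic (standard), it follows that $\mathcal{G}(\mathcal{Z})^{(2)}$ is an analytic (standard) Borel space.

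Finally I would verify the groupoid axioms at the Borel level. The source, range, unit inclusion, inversion, and partial multiplication of $\mathcal{G}(\mathcal{Z})^{(2)}$ are assembled from the corresponding structure maps of $\mathcal{G}(\mathcal{Z})$ by composing with the projections $\pi_1, \pi_2$ and with the (Borel) multiplication of $\mathcal{G}(\mathcal{Z})$. Restrictions and compositions of Borel maps are Borel, so each of these maps is Borel for the relative structure on $\mathcal{G}(\mathcal{Z})^{(2)}$. This presents $\mathcal{G}(\mathcal{Z})^{(2)}$ as a Borel groupoid carried on an analytic (standard) Borel space, that is, as an analytic (standard) Borel groupoid, which is the assertion.

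I expect the one genuinely load-bearing point — the main obstacle — to be the Borel-ness, rather than mere analyticity, of $\mathcal{G}(\mathcal{Z})^{(2)}$ inside the product. It is exactly this that permits upgrading from an \emph{analytic image} to a \emph{Borel subset} and thereby invoking closure of the analytic (standard) class under Borel subsets. This in turn rests squarely on the countable separation of $X$, which forces $\Delta_X$ to be Borel; absent this one would recover only an analytic set and the standard case could fail. Every other step is routine bookkeeping, following the pattern of Hahn's 3.3--3.4.
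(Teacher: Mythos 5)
Your proposal is correct and matches the paper's approach: the paper offers no proof of its own for this lemma, deferring entirely to Hahn's 3.3--3.4, and your argument --- writing $\mathcal{G}(\mathcal{Z})^{(2)}$ as the preimage of the diagonal $\Delta_X$ under the Borel map $(\xi,\eta)\mapsto(s(\xi),t(\eta))$, using countable separation of $X$ to make $\Delta_X$ Borel, and then invoking closure of the analytic (standard) classes under finite products and Borel subsets --- is exactly the standard argument behind that citation. The one load-bearing point is the one you already flag: countable separation of the unit space (recorded in the paper just before the lemma) is what makes the diagonal, and hence $\mathcal{G}(\mathcal{Z})^{(2)}$, genuinely Borel rather than merely analytic.
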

\begin{prop}
If $(\mathcal{G}(\mathcal{Z}),C)$ is an analytic groupoid with invariant measure class, so is $(\mathcal{G}(\mathcal{Z})^{(2)},C^{(2)})$.
\end{prop}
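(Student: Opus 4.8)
The plan is to reduce the statement to the two defining requirements of a measure groupoid—an analytic Borel groupoid structure together with an invariant measure class—and to verify each in turn, leaning on the symmetric quasi-invariant probability furnished by the hypothesis that $C$ is invariant. The analytic Borel structure is already in hand: by the preceding Lemma, $\mathcal{G}(\mathcal{Z})^{(2)} \subset \mathcal{G}(\mathcal{Z}) \times \mathcal{G}(\mathcal{Z})$ is an analytic (standard) Borel groupoid, carrying the canonical groupoid structure on the space of composables (source, range, partial multiplication and inverse induced from that of $\mathcal{G}(\mathcal{Z})$). Consequently the entire weight of the argument falls on constructing an invariant measure class $C^{(2)}$ on $\mathcal{G}(\mathcal{Z})^{(2)}$ and confirming that it depends only on $C$.

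First I would fix a symmetric quasi-invariant probability $\nu \in C$, which exists because $C$ is an invariant measure class. Symmetricity gives $t_*\nu = s_*\nu = \bar\nu$, so that the $t$-decomposition $\nu = \int \nu^x \, d\bar\nu(x)$ and the $s$-decomposition $\nu = \int \nu_x \, d\bar\nu(x)$ are taken over a common base measure; this is precisely what makes the $(t,s)$-decomposable measure $\nu^{(2)} = \int \nu_x \times \nu^x \, d\bar\nu(x)$ well defined on $\mathcal{G}(\mathcal{Z})^{(2)}$. I would then set $C^{(2)} = [\nu^{(2)}]$ and check that this class is symmetric and admits a quasi-invariant probability. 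Symmetry of $\nu^{(2)}$ is obtained by computing its image under the inverse map of $\mathcal{G}(\mathcal{Z})^{(2)}$: inversion interchanges the roles of the two factors in the product kernel and, combined with $\nu^{-1} = \nu$, returns $\nu^{(2)}$ up to equivalence. For quasi-invariance I would pass to an inessential reduction $\mathcal{G}(\mathcal{Z})_o$ over a $\bar\nu$-conull Borel set $U_f$ on which $\nu$ itself is quasi-invariant (as in the strengthened Lemma), and verify the defining relation (1) fibrewise for $\nu^{(2)}$; the left-translation behaviour of $\nu^x$ and the right-translation behaviour of $\nu_x$ recombine, and the resulting Radon–Nikodym cocycle is expressible through that of $\nu$ precisely because left and right invariance of $C$ are equivalent under symmetricity.

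To see that $C^{(2)}$ depends only on $C$ and not on the chosen representative, I would take $\nu \sim \nu'$ in $C$ with positive Borel Radon–Nikodym derivative $F = d\mu/d\nu$ and invoke the invariance of the R–N property under disintegration recorded earlier (the identity $d\mu/d\nu = d\nu_f/d\mu_f$ a.e.). Fibrewise this forces $\nu_x \times \nu^x \sim \nu'_x \times (\nu')^x$ for $\bar\nu$-almost every $x$, whence $\nu^{(2)} \sim (\nu')^{(2)}$ and $C^{(2)}$ is unambiguous. Assembling the three verifications, $C^{(2)}$ is a symmetric measure class possessing a quasi-invariant probability, that is, an invariant measure class for the analytic Borel groupoid $\mathcal{G}(\mathcal{Z})^{(2)}$; hence $(\mathcal{G}(\mathcal{Z})^{(2)}, C^{(2)})$ is an analytic measure groupoid, which is the content of Hahn \cite{Hahn78}, 3.3 and 3.4.

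The main obstacle I anticipate is the fibrewise verification of quasi-invariance in the second paragraph: one must track how the product kernel $\nu_x \times \nu^x$ transforms under the composition of $\mathcal{G}(\mathcal{Z})^{(2)}$, an operation that entangles a left translation in one factor with a right translation in the other, and then identify the resulting cocycle with the modular function of $\nu$. The equivalence of left and right invariance, already established from $t_*\nu = s_*\nu$, is the lever that makes this identification succeed; without it the two factors would not recombine into a single quasi-invariant measure.
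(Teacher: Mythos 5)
Your proposal follows essentially the same route as the paper: the paper likewise constructs $C^{(2)}$ as the class of the $(t,s)$-decomposable measure $\nu^{(2)} = \int \nu_x \times \nu^x \, d\bar{\nu}(x)$ (well defined because symmetricity gives $t_*\nu = s_*\nu$, making left and right invariance of $C$ equivalent) and then defers all verification to Hahn \cite{Hahn78}, 3.3--3.4, exactly as you do. One caution for when you carry out the symmetry check: the inverse on $\mathcal{G}(\mathcal{Z})^{(2)}$ is $(\xi,\gamma)^{-1} = (\xi\gamma,\gamma^{-1})$, not an interchange of the two factors, so symmetry of $[\nu^{(2)}]$ is the statement $i_*\nu^{(2)} \sim \nu^{(2)}$ and already requires the Radon--Nikodym cocycle $\rho$ coming from quasi-invariance of $\nu$ --- i.e.\ the same machinery you correctly flag as the main obstacle, not merely $\nu^{-1} = \nu$.
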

The concept of ergodicity is now delineated and related to the dynamical system of the measure groupoid in the sequel. 

\subsection{Ergodicity and Dynamical System}
The measure groupoid is ergodic if and only if there is a single point $x_o \in X$ such that $X\setminus \{x_o\}$ is null. In other words, ergodicity implies the existence of Dirac probability measures $\{\delta_x : x \in X\}$ defined at each point of $X$. These, as we have seen above, constitute the $\mathcal{G}(\mathcal{Z})$-space.

Another way of stating ergodicity, according to Hahn, is that every Borel function $\phi$ on the base $X$ can be expressed in the form of a positive Borel function $F$ on the arrows $\mathcal{G}(\mathcal{Z})$ given as $\phi \circ t(\xi) = F(\xi, s(\xi))$, where $F$ satisfies $F \circ t^{-1} = F \circ s^{-1}$. This means that the Borel functions on the arrows preserve the equivalence relation the groupoid $\mathcal{G}(\mathcal{Z})$ defines on the base space $X$.

Alternatively, a real-valued Borel function $F$ on the measure groupoid $\mathcal{G}(\mathcal{Z})$ satisfying $F(\xi^{-1}\gamma) = F(\gamma)$, $\mu^{t(\xi)}$-a.e and for $\mu$-almost all $\xi$, corresponds to a Borel function $\phi$ on $X$ such that $F = \phi \circ s$ a.e. Thus, the invariant functions on the equivalence space $X \star X$ (or on the space $X$ with $\mathcal{G}(\mathcal{Z})$-action) are of the form $\phi \circ t$ or $\phi \circ s$ (which are homeomorphisms similar to diffeomorphisms $t\circ \sigma$ defined by bisections $\sigma$ on Lie groupoids.) This is in line with our formulation above; it shows the Borel functions are $\mathcal{G}(\mathcal{Z})$-invariant.

Subsequently, the dynamics is related to the convergence of an ultra-filter $\mathcal{F} \to x$. We can therefore define a net of such positive Borel function $F$ on the arrows $\mathcal{G}(\mathcal{Z})$ given as $F_\alpha(\xi, s(\xi)) \to F(\xi, s(\xi))$; or in the form $\phi_\alpha \circ t(\xi) \to \phi \circ t(\xi)$ which can be considered a local bisection. Since such a net $F_\alpha \to F$ (or in terms of local bisections $\phi_\alpha \to \phi$) corresponds to the ultrafilter, it defines or represents the dynamical system of the commutative algebra $C(X)$. The ergodic measure groupoid is therefore defined as follows.
\begin{defx}(see \cite{Hahn78},2.6)
The measure groupoid $(\mathcal{G}(\mathcal{Z}),C)$ is called ergodic if the only Borel functions $\phi : X \to \R$ satisfying $\displaystyle \int|\phi \circ t - \phi \circ s|d\mu = 0$ are such that $\phi = $ constant $\bar{\mu}$-a.e. Alternatively, $(\mathcal{G}(\mathcal{Z}),C)$ is ergodic if and only if for all $A \in \mathcal{B}(X)$, $\displaystyle \int|I_A\circ t - I_A \circ s|d\bar{\mu} = 0 \; \implies \; \bar{\mu}(A) = 0$ or $\bar{\mu}(X - A) = 0.$
\end{defx}
If then we denote the space of all the Borel function on the measure groupoid $\mathcal{G}(\mathcal{Z})$ with $\mathscr{B}(\mathcal{G}(\mathcal{Z}))$, the convolution product of two Borel function $f,g \in \mathscr{B}(\mathcal{G}(\mathcal{Z}))$ on the space is defined as follows. \[f * g(\xi) = \int_{\xi \gamma = \tau}f(\xi)g(\gamma)d\mu^{s(\xi)}(\gamma) = \int_{\mathcal{G}(\mathcal{Z})(t(\xi),-)}f(\xi)g(\xi^{-1}\gamma)d\mu^{t(\xi)}(\gamma). \]
This follows from the involutive map on any Borel $f \in \mathscr{B}(\mathcal{G}(\mathcal{Z}))$ which is defined as $f^*(\xi) = f(\xi^{-1}) = \overline{f(\xi)}$. Thus, the space $\mathscr{B}(\mathcal{G}(\mathcal{Z}))$ is made into a normed $^*$-algebra, with the norm of $f$ given as the supremum norm \[||f|| = \underset{x \in X}\sup |\phi \circ t| = \underset{x \in X}\sup |\phi \circ s| = \underset{x \in X}\sup |\phi(x)|.\] The representation of this convolution algebra $\mathscr{B}(\mathcal{G}(\mathcal{Z}))$ of the measure groupoid is defined on the Hilbert space $\mathcal{H} = L^2(X,\bar{\mu})$ employing the modular function $\Delta$ which Peter Hahn defined in \cite{Hahn78}. We highlight the definition as follows.

The definition of the modular function $\Delta$ makes use of the inverse map on the arrows $i(\xi,\gamma) = (\xi,\gamma)^{-1} = (\xi\gamma, \gamma^{-1})$, the symmetry of the class $[\mu^{(2)}]$ by which $i_*(\mu^{(2)}) \sim \mu^{(2)}$, and a positive Borel map $\rho$ (a cocycle) satisfying \[ \int F(\xi,\gamma)\rho(\xi,\gamma)d\mu^{(2)}(\xi,\gamma) = \int F(\xi\gamma, \gamma^{-1})d\mu^{(2)}(\xi,\gamma), \] where $\rho : G(\mathcal{Z})^{(2)} \to \R^*_+$ is a homomorphism almost everywhere. For a conull Borel set $\mathcal{G}(\mathcal{Z})_o \subset \mathcal{G}(\mathcal{Z})$, $\xi \in \mathcal{G}(\mathcal{Z})_o$ and $E \in \mathcal{B}(\mathcal{G}(\mathcal{Z}))$, Hahn (Lemma 3.6) shows that $\rho$ satisfies the following \[\int I_E(\xi\gamma)d\mu^{s(\xi)}(\gamma) = \int I_E(\gamma)\rho(\gamma^{-1},\xi)d\mu^{t(\xi)}(\gamma);\] For $\mathcal{G}(\mathcal{Z})_1 \subset \mathcal{G}(\mathcal{Z})_o$ \[\int f(\xi\gamma)d\mu^{s(\xi)}(\gamma) = \int f(\gamma)\rho(\gamma^{-1},\xi)d\mu^{t(\xi)}(\gamma).\]
Thus, as $\rho(\xi,\gamma\tau) = \rho(\xi,\gamma)\rho(\xi\gamma,\tau)$ for $\xi,\xi\gamma,\xi\gamma\tau \in \mathcal{G}(\mathcal{Z})_1$.

With this $\rho$ he defined a homomorphism $\sigma : \mathcal{G}(\mathcal{Z})^{(2)} \to G$ into a group which is analytic $C^{(2)}$-a.e, with a Borel function $k : \mathcal{G}(\mathcal{Z}) \to G$ which is defined $C^{(2)}$-a.a $(\xi,\gamma)$ as $\sigma(\xi,\gamma) = k(\xi)p(\xi\gamma)^{-1}$. So, $\rho(\xi,\gamma) = k(\xi)/k(\xi\gamma)$ $\mu^{(2)}$-a.e defines the map $(\xi,\gamma) \mapsto k(\xi)/k(\xi\gamma)$, which given a conull Borel $\mathcal{G}(\mathcal{Z})_o \subset \mathcal{G}(\mathcal{Z})$, with $\xi \in \mathcal{G}(\mathcal{Z})_o$ we obtain
\[\int f(\xi,\gamma)k(\gamma^{-1})d\mu^{s(\xi)}(\gamma) = \int f(\gamma)k(\gamma^{-1}\xi)k(\gamma^{-1})k(\gamma^{-1}\xi)^{-1}d\mu^{t(\xi)}(\gamma)\] \[ = \int f(\gamma)k(\gamma^{-1})d\mu^{t(\xi)}(\gamma) \eqno{(2)}\] for a Borel map $f \geq 0$.

Next, setting $P(\gamma) = k(\gamma^{-1})$, which is a Borel function on $\mathcal{G}(\mathcal{Z})$ for a $t$-decomposable $\displaystyle \mu = \int \mu^xd\bar{\mu}(x)$, a conull Borel set $U_o \subset X$ such that for every $x \in U_o, \mu^x(\mathcal{G}(\mathcal{Z})) = \mu^x(\mathcal{G}(\mathcal{Z})|_{U_o}) = 1$, $\xi \in \mathcal{G}(\mathcal{Z})|_{U_o}, f \geq 0$ on $\mathcal{G}(\mathcal{Z})$, from (2) we have \[ f(\xi\gamma)P(\gamma)d\mu^{s(\xi)}(\gamma) = \int f(\gamma)P(\gamma)d\mu^{t(\xi)}(\gamma).\]
From these we obtain that the set \[K = \{\xi \in \mathcal{G}(\mathcal{Z}) : \int f(\xi\gamma)P(\gamma)d\mu^{s(\xi)}(\gamma) = \int f(\gamma)P(\gamma)d\mu^{t(\xi)}(\gamma)\] is conull and multiplicative. So $K$ contains an \emph{inessential reduction} by the following theorem.
\begin{thm}(see \cite{Hahn78}, Theorem 3.7)
Let $(\mathcal{G}(\mathcal{Z}),C)$ be a a measure groupoid and $H$ an analytic Borel groupoid. Let $\rho : \mathcal{G}(\mathcal{Z}) \to H$ be an a.e. homomorphism. There is an inessential reduction $\mathcal{G}(\mathcal{Z})_o$ of $\mathcal{G}(\mathcal{Z})$ and a Borel function $\rho_o : \mathcal{G}(\mathcal{Z}) \to H$ such that $\rho_o = \rho$ a.e. and $\rho_o|\mathcal{G}(\mathcal{Z})_o$ is a strict homomorphism. Furthermore, if $K \subset \mathcal{G}(\mathcal{Z})$ is conull and $\{\xi\gamma \in \mathcal{G}(\mathcal{Z}) : (\xi, \gamma) \in \mathcal{G}(\mathcal{Z})^{(2)}\}\cap(K \times K) \subset K$, $K$ contains an inessential reduction.
\end{thm}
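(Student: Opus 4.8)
The plan is to reduce the strictification of $\rho$ to the final clause of the statement, and then to feed that clause back to manufacture $\rho_o$. Throughout, ``$\rho$ is an a.e. homomorphism'' means that $\rho(\xi\gamma) = \rho(\xi)\rho(\gamma)$ for $C^{(2)}$-almost every composable pair $(\xi,\gamma)\in\mathcal{G}(\mathcal{Z})^{(2)}$, where $C^{(2)} = [\nu^{(2)}]$ is the invariant measure class on the composable space constructed above from $\nu^{(2)} = \int \nu_x \times \nu^x d\bar{\nu}(x)$. First I would record the fibrewise form of this identity: applying Fubini to that decomposition, for $\bar\nu$-almost every $x$ and $\nu_x$-almost every $\xi$ one has $\rho(\xi\gamma)=\rho(\xi)\rho(\gamma)$ for $\nu^x$-almost every composable $\gamma$, so that $\gamma\mapsto \rho(\xi\gamma)\rho(\gamma)^{-1}$ is essentially constant on the fibre over $s(\xi)$.

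Next I would define $\rho_o(\xi)$ to be this essentially unique fibre value. Because $H$ is an analytic Borel groupoid, the set describing ``$\gamma\mapsto\rho(\xi\gamma)\rho(\gamma)^{-1}$ attains the value $h$ on a conull set of $\gamma$'' is analytic in $(\xi,h)$, and the von Neumann--Jankov measurable selection theorem yields a Borel function $\rho_o:\mathcal{G}(\mathcal{Z})\to H$ with $\rho_o=\rho$ a.e. Let
\[ K = \{\,\xi\in\mathcal{G}(\mathcal{Z}) : \rho_o(\xi\gamma)=\rho_o(\xi)\rho_o(\gamma)\ \text{for}\ \nu^{s(\xi)}\text{-a.e.}\ \gamma\,\}. \]
Equivalently, following the integral formulation used above for the modular function, one may take the Borel set on which the genuine fibrewise identity
\[ \int f(\xi\gamma)P(\gamma)\,d\mu^{s(\xi)}(\gamma) = \int f(\gamma)P(\gamma)\,d\mu^{t(\xi)}(\gamma) \]
holds with $P(\gamma)=k(\gamma^{-1})$. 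By the Fubini step $K$ is conull, and associativity of the partial multiplication inside these fibre integrals, combined with quasi-invariance and the symmetry $t_*\nu=s_*\nu$, shows that $\xi,\gamma\in K$ composable forces $\xi\gamma\in K$; hence $K$ satisfies the hypothesis of the final clause.

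The heart of the matter is that final clause: a conull, multiplicatively closed $K$ contains an inessential reduction. Here I would decompose $\mu$ over the base. Conullity of $K$ together with the $t$-decomposition $\mu=\int\mu^x d\bar\mu(x)$ gives a $\bar\mu$-conull Borel set on which $\mu^x(K)=1$; symmetrically the $s$-decomposition gives a conull set on which the $s$-fibres meet $K$ in a conull set; the symmetry $t_*\nu=s_*\nu$ lets these two estimates be combined. Intersecting with the conull set furnished by the four conditions of the preceding reduction Lemma, I would propagate conullity using the multiplicative closure: whenever $\xi\in\mathcal{G}(\mathcal{Z})|_{U}$ has almost every $\gamma$ over $s(\xi)$ in $K$ and almost every product $\xi\gamma$ in $K$, closure forces $\xi\in K$ off a null set, and a further measurable selection (again von Neumann--Jankov, using analyticity of the standard Borel structure on $\mathcal{G}(\mathcal{Z})$ and countable separation of $(X,\mathcal{B}(X))$) deletes the exceptional part. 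This produces a $\bar\mu$-conull Borel $U\subset X$ with $\mathcal{G}(\mathcal{Z})|_{U}\subset K$ still satisfying conditions $(1)$--$(4)$; I set $\mathcal{G}(\mathcal{Z})_o=\mathcal{G}(\mathcal{Z})|_{U}$.

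Finally I would conclude: on $\mathcal{G}(\mathcal{Z})_o\subset K$ every composable pair satisfies $\rho_o(\xi\gamma)=\rho_o(\xi)\rho_o(\gamma)$ by the definition of $K$ together with the fibre genericity used to build $\rho_o$, so $\rho_o|\mathcal{G}(\mathcal{Z})_o$ is a strict homomorphism while $\rho_o=\rho$ a.e. by construction. The hard part will be exactly the final clause: converting ``$K$ conull and multiplicatively closed'' into an honest conull base set $U$ with $\mathcal{G}(\mathcal{Z})|_{U}\subset K$. Fibrewise conullity alone does not suffice, so the passage rests essentially on analyticity of the Borel structures to run the selection, and on maintaining the four reduction conditions intact while trimming $U$; that measure-theoretic bookkeeping, rather than the algebra of the homomorphism identity, is where the real difficulty lies.
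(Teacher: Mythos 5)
The first thing to note is that the paper contains no proof of this statement: it is imported verbatim from Hahn (Theorem 3.7 of \cite{Hahn78}), whose own proof in turn invokes Ramsay's strictification techniques, so your attempt can only be judged on its own merits. Your overall strategy --- define $\rho_o(\xi)$ as the essential value of $\gamma \mapsto \rho(\xi\gamma)\rho(\gamma)^{-1}$, establish the final clause about conull multiplicatively closed sets, and feed it back to obtain strictness on an inessential reduction --- is indeed the shape of the Ramsay--Hahn argument.

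However, there is a genuine gap exactly at the step you yourself flag as the heart of the matter, and your sketch of it is circular. You argue: for $\xi \in \mathcal{G}(\mathcal{Z})|_U$, almost every $\gamma$ over $s(\xi)$ lies in $K$ and almost every product $\xi\gamma$ lies in $K$, and then ``closure forces $\xi \in K$ off a null set.'' Multiplicative closure puts products of $K$-elements into $K$; from $\gamma \in K$ and $\xi\gamma \in K$ it yields nothing about $\xi$ itself --- that would be closure under division. The standard repair is to exhibit $\xi$ as a product of two elements of $K$: by quasi-invariance the set $\{\gamma \in t^{-1}(s(\xi)) : \xi\gamma \in K\}$ is conull in the fibre, and by the symmetry of the measure class (fibrewise $(\nu^{y})^{-1} \sim \nu_{y}$ for almost every $y$, a condition that must be built into the choice of $U$) the set $\{\gamma \in t^{-1}(s(\xi)) : \gamma^{-1} \in K\}$ is also conull; choosing $\gamma$ in the intersection gives $\xi = (\xi\gamma)\gamma^{-1}$, a composable product of two elements of $K$, hence $\xi \in K$ exactly, for every $\xi \in \mathcal{G}(\mathcal{Z})|_U$ with no exceptional null set. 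This also eliminates your proposed von Neumann--Jankov selection to ``delete the exceptional part,'' which in any case could not work: an inessential reduction must be a full restriction $\mathcal{G}(\mathcal{Z})|_{U'}$ over a conull base set, and discarding a null set of arrows from $\mathcal{G}(\mathcal{Z})|_U$ never produces one. A smaller gap of the same kind occurs at the end: membership of $\xi$, $\gamma$, and $\xi\gamma$ in your $K$ only gives the homomorphism identity against almost every third factor, so to obtain it for the given pair you still need the cancellation argument --- choose a generic $\tau$, compute $\rho_o(\xi\gamma\tau)$ two ways, and cancel $\rho_o(\tau)$ in the groupoid $H$ --- which your appeal to ``fibre genericity'' leaves implicit.
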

From these formulations and Theorem 3.8 \cite{Hahn78}, we define the modular function $\Delta$ as follows.
\begin{defx}
The modular function $\Delta := \gamma \mapsto P(\gamma)/P(\gamma^{-1})$ is $\mu^{(2)}$-a.e. homomorphism.
\end{defx}
The modular function $\Delta$ is now used to define a (unitary) representation of the convolution algebra $\mathscr{B}(\mathcal{G}(\mathcal{Z}))$ of the measure groupoid $(\mathcal{G}(\mathcal{Z}),C)$ on $\mathcal{H} = L^2(X,\bar{\mu})$ in the sequel. Notice that $\bar{\mu}$ is a system of Haar measures supported on the fibres $\{\mathfrak{m}_x, x \in X\}$; but given simply as $\bar{\mu}$ because they are same or equivalent measures. Thus, the Hilbert space $\mathcal{H} = L^2(X,\bar{\mu})$ can be considered a bundle space made up of the fibres $\mathcal{G}(\mathcal{Z})(x,-)$. But because the Borel functions $F$ defined on the arrows are equal to Borel functions $\phi \circ t$ defined on $X$, we put the Hilbert space simply as $\mathcal{H} = L^2(X,\bar{\mu})$.

\subsection{A Unitary Representation of the Measure Groupoid}
Given the convolution algebra $\mathscr{B}(\mathcal{G}(\mathcal{Z}))$ of Borel functions defined on the measure groupoid $(\mathcal{G}(\mathcal{Z}), C)$, we now follow Peter Hann formulation of the unitary representation of the convolution algebra $\mathscr{B}(\mathcal{G}(\mathcal{Z}))$ on the space of bounded operators $B(\mathcal{H})$ on the Hilbert space $\mathcal{H} = L^2(X,\bar{\mu})$. His procedure simplified the definition of von Neumann algebra arising from ergodic action of the group $G(1)$ on a measure space $X$. As we have seen, the simplification is achieved above by considering the system of Haar measures on the principal groupoid $\mathcal{G}(\mathcal{Z}) \rightrightarrows X$, and using them in the definition of the convolution algebras (of Borel functions on measure groupoid $(\mathcal{G}(\mathcal{Z}), C)$.)

We employed Peter Hahn's definition of the convolution algebra of a principal groupoid to define the convolution algebra $\mathscr{B}(\mathcal{G}(\mathcal{Z}))$, this gives a $^*$-algebra that coincides with the von Neumann algebra $B(\mathcal{H})$, where $\mathcal{H} = L^2(X,\bar{\mu})$ and $\bar{\mu}$ a probability measure on $X$. The following result on $^*$-representation of the resulting algebra is central to the paper.
\begin{prop}
The map $T : \mathscr{B}(\mathcal{G}(\mathcal{Z})) \to B(\mathcal{H})$ is a $^*$-representation.
\end{prop}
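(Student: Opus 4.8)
The plan is to realise $T$ as the integrated form of the regular representation of the measure groupoid, following the pattern of Hahn \cite{Hahn78}. First I would define, for $f \in \mathscr{B}(\mathcal{G}(\mathcal{Z}))$ and $\eta \in \mathcal{H} = L^2(X,\bar{\mu})$, the operator
\[ (T(f)\eta)(x) = \int_{\mathcal{G}(\mathcal{Z})(x,-)} f(\gamma)\,\eta(s(\gamma))\,\Delta(\gamma)^{-1/2}\,d\mu^x(\gamma), \]
where the integration runs over the $t$-fibre $t^{-1}(x)$ and $\Delta$ is the modular function defined above. The half-power $\Delta^{-1/2}$ is inserted precisely so that the adjoint comes out correctly; this is the standard symmetrisation. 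The first thing to check is that $T(f)$ lands in $B(\mathcal{H})$. Since each $\mu^x$ is a probability on $t^{-1}(x)$ (by the Haar lemma, $\nu^x(t^{-1}(x)) = 1$ on the conull set $U_f$) and $f$ is bounded in the supremum norm, Cauchy--Schwarz applied fibrewise, together with the symmetry $t_*\nu = s_*\nu$, yields a Schur-type estimate $\|T(f)\eta\|_2 \le \|f\|\,\|\eta\|_2$, so that $\|T(f)\| \le \|f\|$ and $T(f)$ is bounded.

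Linearity of $T$ is immediate from linearity of the fibre integral. The substantive step is multiplicativity, $T(f * g) = T(f)T(g)$. Here I would expand $(T(f*g)\eta)(x)$ using the convolution formula, producing a double integral over a pair of composable arrows; Fubini's theorem (valid by the analyticity of $\mathcal{G}(\mathcal{Z})$ and standardness of the Borel structure) lets me interchange the order of integration. The decisive manoeuvre is the left-invariance of the Haar system, equation (1), which allows the inner variable to be re-indexed by left translation, turning the single fibre integral against $\mu^x$ into the nested integral realising $T(f)(T(g)\eta)$. The modular factors reassemble correctly because $\Delta$ is an almost-everywhere homomorphism, so $\Delta(\xi\gamma)^{-1/2} = \Delta(\xi)^{-1/2}\Delta(\gamma)^{-1/2}$ on the relevant \emph{inessential reduction}, and the cocycle identity for $\rho$ recorded above absorbs the Radon--Nikodym mismatch between $\mu^{s(\xi)}$ and $\mu^{t(\xi)}$.

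For the involution I would verify $T(f^*) = T(f)^*$ by computing the sesquilinear form $\langle T(f)\eta,\zeta\rangle$ as an integral over $\mathcal{G}(\mathcal{Z})$ with respect to $\nu$, performing the change of variables $\gamma \mapsto \gamma^{-1}$, and invoking the symmetry of the measure class, $i_*\nu \sim \nu$, together with $f^*(\gamma) = \overline{f(\gamma^{-1})}$. The weight $\Delta^{-1/2}$ is exactly what is needed so that the inversion leaves no residual modular factor, and the form is then identified with $\langle \eta, T(f^*)\zeta\rangle$. Combined with boundedness and multiplicativity, this shows that $T$ is a $^*$-homomorphism into $B(\mathcal{H})$, that is, a $^*$-representation.

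I expect the main obstacle to be the multiplicativity computation, because it is the one place where Fubini, the left-invariance of the Haar system, and the cocycle property of the modular function must be combined simultaneously, and each of these holds only almost everywhere and only after passing to an inessential reduction. Care is required to ensure the exceptional null sets can be chosen uniformly, so that the identity of operators holds genuinely on all of $\mathcal{H}$ rather than merely formally on a dense set of vectors.
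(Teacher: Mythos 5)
Your proposal takes essentially the same route as the paper: the paper's proof is exactly Hahn's integrated regular-representation construction, defining $T_f u(x) = \int f(\gamma)\,u(s(\gamma))\,P(\gamma)\,d\mu^{x}(\gamma)$ and then verifying boundedness via Cauchy--Schwarz, the adjoint identity $T_{f^*} = T_f^*$ via the inversion change of variables and symmetry of the measure class, and multiplicativity $T_{f\star g} = T_f \circ T_g$ via Fubini plus the left-invariance property (1) of the Haar system --- precisely your three steps, with the paper's weight $P(\gamma) = k(\gamma^{-1})$ playing the role of your symmetrizing factor $\Delta(\gamma)^{-1/2}$ (the two differ only in normalization, since $\Delta = P/(P\circ i)$). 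Your closing caution about choosing the exceptional null sets uniformly via an inessential reduction corresponds to the paper's reliance on Hahn's Theorem 3.7, so the attempt is correct and matches the paper's argument.
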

\begin{proof}
The proof follows Hahn \cite{Hahn78}. For the representation, given $f \in \mathscr{B}(\mathcal{G}(\mathcal{Z}))$ and $u,v \in L^2(X,\bar{\mu}) = \mathcal{H}$; define a homomorphism \[T : \mathscr{B}(\mathcal{G}(\mathcal{Z})) \to B(\mathcal{H}), f \mapsto T_f = \int f(\gamma)P(\gamma)d\mu^{t(\gamma)}(\gamma).\] This defines an operator $T_f : \mathcal{H} \to \mathcal{H}$ by $u \mapsto T_f(u)$ such that \[\langle T_f(u), v \rangle = \int f(\gamma)u(s(\gamma))\overline{v(t(\gamma))}P(\gamma)d\mu(\gamma) \]  \[ = \int\left(\int f(\gamma)u(s(\gamma))P(\gamma)d\mu^{x}(\gamma)\right)\overline{v(x)}d\bar{\mu}(x). \] \[= \int T_f(u(x)) \overline{v(x))}d\bar{\mu}(x) \]
Thus, the map $\displaystyle T_fu = \left(x \mapsto \int f(\gamma)u(s(\gamma))P(\gamma)d\mu^{x}(\gamma) \right)$ is also in $\mathcal{H}$, which makes $T_f \in B(\mathcal{H})$.\\
Likewise, we have the map $\displaystyle T : f^* \mapsto \int f(\gamma^{-1})P(\gamma^{-1})d\mu^{t(\gamma)}(\gamma)$ such that \[\langle T_{f^*}(u),v \rangle = \int f^*(\gamma)u(s(\gamma))\overline{v(t(\gamma))}P(\gamma)d\mu(\gamma) = \int f(\gamma^{-1})u(s(\gamma))\overline{v(t(\gamma))}P(\gamma^{-1})d\mu(\gamma)\] \[ = \int \overline{f(\gamma)v(s(\gamma))}u(t(\gamma))P(\gamma)d\mu(\gamma) \] \[ = \overline{\langle T_fv, u \rangle} = \langle u, T_fv \rangle = \langle T^*_{f}u, v \rangle. \; \text{Thus, } T_{f^*} = T^*_f. \]

Given the convolution product $\displaystyle f\star g(\xi) = \int f(\xi)g(\gamma)d\mu^{t(\xi)}(\gamma)$; its image under $T$ is given as follows.
\[T_{f\star g} = \int \int f(\xi\gamma)g(\gamma^{-1})P(\gamma)d\mu^{s(\xi)}(\gamma)P(\xi)d\mu(\xi) \] \[ = \int \int \int f(\gamma\xi)g(\xi^{-1})P(\xi)P(\gamma)d\mu^{s(\gamma)}(\xi)d\mu^x(\gamma)d\bar{\mu}(x); \; \text{by t-decomposition of } \mu \]
\[ =\int \int \int f(\xi)g(\xi^{-1}\gamma)P(\xi)P(\gamma)d\mu^{t(\gamma)}(\xi)d\mu^x(\gamma)d\bar{\mu}(x); \; \text{by convolution property}\]
\[=\int \int f(\xi)\left(\int g(\xi^{-1}\gamma)P(\gamma)\mu^x(\gamma)\right)P(\xi)d\mu^x(\xi)d\bar{\mu}(x) \]
\[=\int \int f(\xi) \left(\int g(\xi^{-1}\gamma)P(\gamma)d\mu^{t(\xi)}(\gamma)\right)P(\xi)d\mu^x(\xi)d\bar{\mu}(x) \]
\[=\int f(\xi)\left( \int g(\gamma)P(\gamma)d\mu^{s(\xi)}(\gamma)\right)P(\xi)d\mu(\xi); \; \text{by reversal of t-decomposition}\]
\[ = T_f \circ T_g.\]
Finally, from $\displaystyle T_{f^*} = \int f^*(\gamma)P(\gamma)d\mu(\gamma)$ we have
\[T_{g^*\star f^*} = \int \int g^*(\xi\gamma)f^*(\gamma^{-1})P(\gamma)d\mu^{s(\xi)}(\gamma)P(\xi)d\mu(\xi) \]
\[ = \int \int f(\gamma)g(\gamma^{-1}\xi^{-1})P(\gamma)d\mu^{s(\xi)}(\gamma)\Delta(\xi)^{-1}P(\xi)d\mu(\gamma).\]
\[ = \int \int f(\xi\gamma)g(\gamma^{-1})P(\gamma)d\mu^{t(\xi)}(\gamma)\Delta(\xi)^{-1}P(\xi)d\mu(\xi); \; (\text{using } \xi \mapsto \xi^{-1}) \]
\[ = \int \overline{g(\gamma)}\left(\int \overline{f(\gamma^{-1}\xi^{-1})}\Delta(\xi)^{-1}P(\xi)d\mu(\xi)\right)P(\gamma)d\mu^{s(\xi)}(\gamma)\]
\[ = T^*_g \circ T^*_f. \]
\end{proof}
The operator is shown to be an isometry as follows. \[|\langle T_fu, v \rangle| = |\int_E f(\gamma)u(s(\gamma))v(t(\gamma))d\mu(\gamma)| \] \[ \leq \int_E |u(s(\gamma))||v(t(\gamma))|d\mu(\gamma); \; \text{since } ||f|| \leq 1 \] \[ \leq \left(\int \int |u(s(\gamma))|^2 d\mu(\gamma)|v(t(\gamma))|^2 d\mu(\gamma)\right)^{1/2} \] \[ = \left(\int |u(x)|^2d\bar{\mu}(x)\right)^{1/2} \left(\int |v(x)|^2d\bar{v}(x)\right)^{1/2} \] \[ = ||u||_2||v||_2. \]
Thus, the $^*$-representation is a unitary representation since $\langle T_fu, T_gv \rangle \leq \langle u, v \rangle$. This is in conformity with our understanding of the Borel functions on the measure groupoid as probability measures on $X$ or Haar system of measures on the groupoid.
\begin{prop}
The convolution algebra $\mathscr{B}(\mathcal{G}(\mathcal{Z}))$ is a commutative von Neuman algebra.
\end{prop}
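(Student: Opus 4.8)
The plan is to use the faithful $^*$-representation $T$ just constructed to realize $\mathscr{B}(\mathcal{G}(\mathcal{Z}))$ as the algebra of multiplication operators on $\mathcal{H} = L^2(X,\bar\mu)$, which is the canonical commutative von Neumann algebra. First I would record that the isometry estimate displayed after the preceding proposition shows $T$ is injective, so $\mathscr{B}(\mathcal{G}(\mathcal{Z}))$ is $^*$-isomorphic to its image $\mathcal{A} = T(\mathscr{B}(\mathcal{G}(\mathcal{Z}))) \subseteq B(\mathcal{H})$. It therefore suffices to prove that $\mathcal{A}$ is a commutative von Neumann algebra, and the proposition follows by transport of structure.

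Second, to obtain commutativity I would exploit the ergodicity of $(\mathcal{G}(\mathcal{Z}), C)$. As noted in the discussion preceding the definition of an ergodic measure groupoid, every Borel function $F$ on the arrows satisfying $F \circ t^{-1} = F \circ s^{-1}$ is of the form $F = \phi \circ t = \phi \circ s$ for a Borel function $\phi$ on the base $X$, with $\|F\| = \sup_{x \in X}|\phi(x)|$. Thus the assignment $f = \phi \circ t \mapsto \phi$ identifies $\mathscr{B}(\mathcal{G}(\mathcal{Z}))$ isometrically with the bounded Borel functions on $X$ modulo $\bar\mu$-null sets, i.e.\ with $L^\infty(X,\bar\mu)$ (here compactness of $X$, giving $C(X)=C_b(X)$, keeps us in $L^\infty$). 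Inserting $f = \phi \circ t$ into the operator formula $T_f u(x) = \int f(\gamma)u(s(\gamma))P(\gamma)\,d\mu^x(\gamma)$ and using that $t(\gamma)=x$ is constant on the $t$-fibre over $x$, I would get $T_f u(x) = \phi(x)\int u(s(\gamma))P(\gamma)\,d\mu^x(\gamma)$, and then, by the quasi-invariance Lemma (conditions $\nu^x(t^{-1}(x))=1$ and $\xi\cdot\nu^{s(\xi)}\sim\nu^{t(\xi)}$ on the inessential reduction), reduce the inner integral to $u(x)$, giving $T_f = M_\phi$, multiplication by $\phi$. Since $M_\phi M_\psi = M_{\phi\psi} = M_{\psi\phi} = M_\psi M_\phi$, the algebra $\mathcal{A} = \{M_\phi : \phi \in L^\infty(X,\bar\mu)\}$ is commutative, and correspondingly the convolution product collapses to the pointwise product $\phi\psi$.

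Third, for the von Neumann property I would invoke the standard fact that the multiplication algebra $\{M_\phi : \phi \in L^\infty(X,\bar\mu)\}$ is a maximal abelian $^*$-subalgebra of $B(L^2(X,\bar\mu))$ and coincides with its own commutant, so that $\mathcal{A}' = \mathcal{A}$ and hence $\mathcal{A}'' = \mathcal{A}$. By the bicommutant theorem $\mathcal{A}$ is weakly closed, hence a von Neumann algebra; combined with the commutativity of the previous step this yields the assertion. The identification in the second step also exhibits $\mathcal{A} \cong L^\infty(X,\bar\mu) \simeq \mathcal{M}(X)$, matching the picture announced in the abstract.

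The main obstacle I anticipate is precisely the second step: verifying that $T_f$ is genuinely the multiplication operator $M_\phi$, which hinges on the normalisation identity $\int u(s(\gamma))P(\gamma)\,d\mu^x(\gamma) = u(x)$. This is where the principal (equivalence-relation) structure of $\mathcal{G}(\mathcal{Z})$ and the explicit choice $P(\gamma)=k(\gamma^{-1})$ of the Haar system $\{\mu^x\}$ must be used with care, controlling the inner integral up to $\bar\mu$-null sets and confirming that the image lands in $L^\infty$ rather than among merely measurable (possibly unbounded) functions. Once this reduction to multiplication operators is secured, both the commutativity and the weak closure are routine.
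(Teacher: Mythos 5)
Your argument stands or falls on the normalization identity $\int u(s(\gamma))P(\gamma)\,d\mu^{x}(\gamma) = u(x)$, and this is precisely where it fails: that identity is false except when the groupoid degenerates to its unit space. By construction the fibre measure $\mu^{x}$ has full support on the $t$-fibre $t^{-1}(x)=\mathcal{G}(\mathcal{Z})(x,-)$ (the paper itself records $supp(\nu^{s(\xi)})=\mathcal{G}(\mathcal{Z})(s(\xi),-)$), and $s$ maps that fibre onto the whole orbit of $x$. Hence $\int u(s(\gamma))P(\gamma)\,d\mu^{x}(\gamma)$ is a weighted average of $u$ over the orbit of $x$; for it to return $u(x)$ for every $u\in L^{2}(X,\bar{\mu})$ one would need $s_{*}\bigl(P\,\mu^{x}\bigr)=\delta_{x}$, i.e.\ the fibre measure concentrated at the unit $1_{x}$. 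The quasi-invariance lemma you invoke gives nothing of the sort: $\xi\cdot\nu^{s(\xi)}\sim\nu^{t(\xi)}$ is an equivalence of measures under translation, not a statement that fibre measures are point masses. Worse, the hypothesis you lean on to set up the identification, ergodicity, pulls in exactly the opposite direction: ergodicity forces orbits to be large, so $T_{\phi\circ t}$ is $\phi(x)$ times a genuine orbit average of $u$, not the multiplication operator $M_{\phi}$. This is not a repairable technicality. For a nontrivial ergodic principal measure groupoid, the weak closure of the convolution algebra in $B(L^{2}(X,\bar{\mu}))$ is the Murray--von Neumann/Feldman--Moore group-measure-space algebra, which is a factor (trivial center); no correct argument can simultaneously use nontrivial ergodicity and collapse the convolution product to the pointwise product on the base, which is what your step two asserts.

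Two further points. First, your step one claims injectivity of $T$ from the displayed estimate; that estimate only shows $|\langle T_{f}u,v\rangle|\leq \|u\|_{2}\|v\|_{2}$ when $\|f\|\leq 1$, i.e.\ contractivity, so faithfulness is never established and cannot be quoted. Second, your identification of $\mathscr{B}(\mathcal{G}(\mathcal{Z}))$ with $L^{\infty}(X,\bar{\mu})$ conflates two different classes: $\mathscr{B}(\mathcal{G}(\mathcal{Z}))$ consists of all Borel functions on the arrows, while the pullbacks $\phi\circ t$ form a proper subclass that is not closed under convolution; and the truly invariant functions ($\phi\circ t=\phi\circ s$ a.e.) are, by the very ergodicity you cite, constant a.e., so that reading would leave you with only the scalars. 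For comparison, the paper's own proof takes a completely different and much softer route: it never attempts a multiplication-operator realization, but appeals to Connes' two characterizations of commutative von Neumann algebras --- invariance under a group of unitaries (here $Aut(X,\mu)\subset\mathcal{U}(\mathcal{H})$, via the condition $F\circ t^{-1}=F\circ s^{-1}$) and closure under weak limits (via convergence of nets $f_{\alpha}\to f$ arising from the ultrafilters). Whatever one thinks of the rigor of that argument, it does not rest on the identity that sinks yours.
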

\begin{proof}
Using Connes' characterization of commutative von Neumann algebra in \cite{Connes94}, 1.3 as the algebras of operators on Hilbert space that are invariant under a group (or subgroup) of unitary operators, it follows that the convolution algebra $\mathscr{B}(\mathcal{G}(\mathcal{Z}))$ of the Lie groupoid $\mathcal{G}(\mathcal{Z})$ is a commutative von Neumann algebra since it is invariant under $Aut(X,\mu) \subset \mathcal{U}(\mathcal{H})$ as given by the condition of ergodicity on $F \circ t^{-1} = F \circ s^{-1}$ on $F$, which implies invariance under transformation of $X$.

The same proposition also follows if we take a second characterization of a commutative von Neumann algebra by \cite{Connes94} as an involutive algebra of operators that is closed under weak limits. The presence of the ultra-filter $\mathcal{F} \to x$ in every fibre $\mathfrak{m}_x$ of the principal Lie groupoid $\mathcal{G}(\mathcal{Z})$ implies, as we have hinted above, the convergence of nets of Borel functions $f_\alpha \to f$ in the normed $^*$-algebra $\mathscr{B}(\mathcal{G}(\mathcal{Z}))$ or nets of local bisections $\phi_\alpha \to \phi$ of the principal Lie groupoid $\mathcal{G}(\mathcal{Z})$. This is also related to the ergodicity of the measure groupoid $(\mathcal{G}(\mathcal{Z}), C)$.
\end{proof}
\begin{coro}
The dynamical system of the commutative von Neumann algebra $\mathcal{N}(\mathcal{H})$ is defined by the convergence of nets of operators $T_\alpha$ defined by the nets of Borel functions $f_\alpha \to f$ or local bisections $\phi_\alpha \to \phi$.
\end{coro}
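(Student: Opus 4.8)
The plan is to read this corollary as the assertion that the von Neumann algebra $\mathcal{N}(\mathcal{H}) = T(\mathscr{B}(\mathcal{G}(\mathcal{Z})))$ inherits its dynamical structure directly from the net convergence already identified with the ergodic dynamics of $C(X)$ on $\mathcal{M}(X)$. Accordingly, I would first recall the two facts established immediately above: that $T$ is a $^*$-representation which is an isometry (hence a bounded, in fact contractive, $^*$-homomorphism), and that $\mathscr{B}(\mathcal{G}(\mathcal{Z}))$ is a commutative von Neumann algebra, that is, an involutive operator algebra closed under weak limits. I would also recall from the ergodicity subsection that the dynamics of the system is encoded by the convergence of an ultrafilter $\mathcal{F} \to x$ in each fibre $\mathfrak{m}_x$, and that this convergence is carried by nets of Borel functions $f_\alpha \to f$ on the arrows (equivalently, by nets of local bisections $\phi_\alpha \to \phi$).

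Second, I would transfer these nets through the representation. For fixed $u, v \in \mathcal{H} = L^2(X, \bar{\mu})$, the pairing formula from the preceding proposition gives
\[ \langle T_{f_\alpha} u, v \rangle = \int f_\alpha(\gamma) u(s(\gamma)) \overline{v(t(\gamma))} P(\gamma) \, d\mu(\gamma). \]
Because each $f_\alpha$ satisfies $\|f_\alpha\| \le 1$ and the integrand is dominated by an integrable function (the isometry estimate already exhibited controls $|u(s(\gamma))||v(t(\gamma))|P(\gamma)$), I would invoke dominated convergence to pass $f_\alpha \to f$ under the integral and obtain $\langle T_{f_\alpha} u, v \rangle \to \langle T_f u, v \rangle$. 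This says precisely that $T_{f_\alpha} \to T_f$ in the weak operator topology, so the representation is continuous for the net convergence that carries the dynamics.

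Third, I would close the argument using weak closure. Since $\mathcal{N}(\mathcal{H})$ is a von Neumann algebra, it is closed in the weak operator topology, so the limit operator $T_f$ lies in $\mathcal{N}(\mathcal{H})$; moreover, by the ergodicity criterion $F \circ t^{-1} = F \circ s^{-1}$, the limit is a $\mathcal{G}(\mathcal{Z})$-invariant element, namely the ergodic limit of the system. Thus every convergent net in the convolution algebra produces, via $T$, a weakly convergent net of operators whose limit is again in the algebra, and the time evolution of $\mathcal{N}(\mathcal{H})$ is exactly this family of operator-net convergences $T_{f_\alpha} \to T_f$.

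The hard part will be matching the two topologies honestly: I must confirm that the mode of net convergence $f_\alpha \to f$ inherited from the ultrafilter (a pointwise or almost-everywhere convergence modulated by the cocycle $P$) maps under $T$ to genuine weak-operator convergence, and not merely to a formal statement. This requires care with the almost-everywhere identifications built into $\mathscr{B}(\mathcal{G}(\mathcal{Z}))$ and with the modular factor $P(\gamma)$ appearing in the integral, so that the dominated-convergence step is uniform enough across the net indexed by the filter and so that the supremum norm $\|f_\alpha\| = \sup_{x} |\phi_\alpha(x)|$ indeed controls the passage to the limit throughout.
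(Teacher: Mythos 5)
Your proposal is sound, but it argues the corollary by a more operator-theoretic route than the paper does. The paper's own proof is essentially two sentences: it says the result ``follows from the definition of the operators'' $f \mapsto T_f = \int f(\gamma)P(\gamma)\,d\mu^{t(\gamma)}(\gamma)$, and then identifies the nets $f_\alpha \to f$ (or $\phi_\alpha \to \phi$) with nets of non-ergodic invariant measures converging to ergodic limits, citing Einsiedler--Ward for the claim that such convergence \emph{is} the dynamical system of an ergodic action. In other words, the paper locates all the dynamical content in the measure-theoretic picture ($\mu_\alpha \to \delta_x$ corresponding to $Z[\mathfrak{m}_x] \to x$) and does not verify any operator-topology statement at all. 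You, by contrast, actually carry the nets through the representation: you use the pairing formula $\langle T_{f_\alpha}u, v\rangle = \int f_\alpha(\gamma)u(s(\gamma))\overline{v(t(\gamma))}P(\gamma)\,d\mu(\gamma)$ together with the isometry bound to get weak-operator convergence $T_{f_\alpha} \to T_f$, and then invoke weak closure of the von Neumann algebra to keep the limit inside $\mathcal{N}(\mathcal{H})$. What your approach buys is genuine analytic content that the paper omits --- it explains \emph{why} a convergent net of Borel functions yields a convergent net of operators, which the paper merely asserts (and only gestures at later, after the corollary, when it remarks that the homomorphism property transfers net convergence to the operator algebra). What the paper's approach buys is the explicit link to the ergodic-theoretic meaning of ``dynamical system,'' which in your write-up appears only as a recalled premise rather than as the conclusion's substance. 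One caution on your side: the dominated convergence step you flag is indeed the weak point, since dominated convergence is a theorem about sequences and fails for general nets; you would need the net $f_\alpha \to f$ to converge in a mode (e.g.\ uniformly bounded and pointwise a.e.\ along a countable cofinal subnet, or in $L^1(\mu)$) strong enough to justify passing to the limit under the integral. The paper never confronts this because it never makes the operator-convergence claim precise, so your honest flagging of it is an improvement, not a defect, relative to the source.
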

\begin{proof}
This follows from the definition of the operators above as \[ f \mapsto T_f = \int f(\gamma)P(\gamma)d\mu^{t(\gamma)}(\gamma).\] That these nets define the dynamical system of the von Neumann algebra follows from their relationship to the convergence of nets of non-ergodic measures to ergodic limits, which as given in \cite{EinsWard2011}, represents the dynamical system of ergodic actions.
\end{proof}
From these results, it follows that the measure groupoid $(\mathcal{G}(\mathcal{Z}),C)$ has invariant system of Haar measures defined by a measure class $C$ is in agreement with the position that the measure classes are the orbits of the action of the algebra $C(X)$ on the generalized space $\mathcal{M}(X)$. The resulting dynamical system is defined or represented on the generalized space by convergent nets of measures $\mu_\alpha \to \delta_x$ with ergodic limits corresponding to $z$-ultrafilters $Z(\mathfrak{m}_x) \to x$ in $X$.

Further, the existence of many measure classes for the $^*$-representation points to the fact that the $^*$-representation of the convolution algebra of the measure groupoid is not uniquely tied to any measure class. In other words, the left Haar system of measures is not unique. The homomorphism of the $^*$-representation implies that the convergence of a net of Borel function on the convolution algebra $\mathscr{B}(\mathcal{G}(\mathcal{Z}))$ implies a net of bounded unitary operators in the von Neumann algebra $\mathcal{N}(\mathcal{H})$.

\section{conclusion}
We have presented the commutative algebra $C(X)$-action on the generalized space $\mathcal{M}(X)$ as an extension of the polar action of the automorphism group $Aut(X,\mu)$, in the form of the action of the measure groupoid $G(\mathcal{Z})$ on the space $\mathcal{X}$ of geometric or closed points of the $\mathcal{M}(X)$. Ergodic requirements made it into the dynamical system defined by the commutative von Neumann algebra $\mathcal{N}(\mathcal{H})$ on the Hilbert space $\mathcal{H} = L^2(X,\bar{\mu})$. Hence, the presentation of the geometric space $\mathcal{X} = \{\delta_x : x \in X\}$ as a $(G(\mathcal{Z}),H)$-equivalence has import for the $^*$-representation of the convolution algebra $\mathscr{B}(\mathcal{G}(\mathcal{Z}))$ of the principal Lie groupoid $\mathcal{G}(\mathcal{Z})$ or the measure groupoid $(G(\mathcal{Z}), \mathcal{C})$ on the von Neumann algebra $B(\mathcal{H})$ of bounded operators on $\mathcal{H}$.

Considered as the action of a compact (Borel) group $G(1) \subset C(X)$, the orbit of the principal Lie groupoid $\mathcal{G}(\mathcal{Z})$ action is an immersed submanifold or section $\Sigma$, which meets every measure class orthogonally; which gave rise to the uniqueness of the measure groupoids $(\mathcal{G}(\mathcal{Z}),C)$. While in the case of a transitive Lie groupoid, the section $\Sigma$ is defined by a horizontal distribution $\mathcal{H}$ whose integral manifold gives the foliation of the groupoid, here it is defined by nets of $Aut(X,\mu)$-invariant Borel measures (or Borel functions) related to the null hyperplane $\mathfrak{m}_x$. Therefore the net define the dynamical system of the von Neummann algebra corresponding to the convolution algebra $\mathscr{B}(\mathcal{G}(\mathcal{Z}))$ of the principal Lie groupoid $\mathcal{G}(\mathcal{Z})$. 

Thus, the classical Chevalley restriction Theorem for the adjoint action of a compact group $G$ applies here also. This states that since the ergodic subgroup $Aut(X,\mu)$ is compact, a $Aut(X,\mu)$-invariant function on $\mathcal{M}(X)$ is integrable if and only if its restriction to the section $\Sigma$ is integrable, and invariant under the stabilizer of the section $\Sigma$; where the section $\Sigma$ in this case is given by the measure groupoid $(\mathcal{G}(\mathcal{Z}), C)$. See also \cite{GroveZiller2012}.

\bibliographystyle{amsplain}

\providecommand{\bysame}{\leavevmode\hbox to3em{\hrulefill}\thinspace}

\end{document}